\newtheorem{theorem}{Theorem}[section]
\newtheorem{cor}[theorem]{Corollary}
\newtheorem{lem}[theorem]{Lemma}
\newtheorem{prop}[theorem]{Proposition}
\theoremstyle{definition}
\newtheorem{defn}[theorem]{Definition}
\newtheorem{rem}[theorem]{Remark}
\newtheorem{example}[theorem]{Example}
\numberwithin{equation}{section}
\newcommand{\N}{\mathbb{N}}
\newcommand{\R}{\mathbb{R}}
\newcommand{\T}{\mathbb{T}}
\newcommand{\Z}{\mathbb{Z}}
\newcommand{\Q}{\mathbb{Q}}
\newcommand{\C}{\mathbb{C}}
\newcommand{\mS}{\mathbb{S}}
\newcommand{\cB}{\mathcal{B}}
\newcommand{\cI}{\mathcal{I}}
\newcommand{\cJ}{\mathcal{J}}
\newcommand{\cN}{\mathcal{N}}
\newcommand{\cP}{\mathcal{P}}
\newcommand{\cO}{\mathcal{O}}
\newcommand{\cR}{\mathcal{R}}
\newcommand{\cL}{\mathcal{L}}
\newcommand{\cZ}{\mathcal{Z}}
\newcommand{\cW}{\mathcal{W}}
\newcommand{\cU}{\mathcal{U}}
\begin{document}

\title{Most linear flows on $\mathbb{R}^d$ are Benford}

\author{Arno Berger\\[2mm] Mathematical and
    Statistical Sciences\\University of Alberta\\Edmonton, Alberta,
  {\sc Canada}}

\maketitle
\begin{abstract}
\noindent
A necessary and sufficient condition (``exponential nonresonance'') is
  established for every signal obtained from a linear flow on $\mathbb{R}^d$
  by means of a linear observable to either vanish identically or else
  exhibit a strong form of Benford's Law (logarithmic distribution of
  significant digits). The result extends and unifies all previously
  known (sufficient) conditions. Exponential nonresonance is
  shown to be typical for linear flows, both from a topological and a
  measure-theoretical point of view.
\end{abstract}
\hspace*{6.6mm}{\small {\bf Keywords.} Benford function, (continuous) uniform distribution
mod $1$, linear}\\[-1mm]
\hspace*{25.5mm}{\small flow, observable, $\Q$-independence, (exponentially) nonresonant set.}

\noindent
\hspace*{6.6mm}{\small {\bf MSC2010.} 34A30, 37A05, 37A45, 11J71, 11K41, 62E20.}

\medskip

\section{Introduction}

Let $\phi$ be a flow on $X=\R^d$ endowed with the usual
topology, i.e., $\phi: \R \times X \to X$ is continuous, and
$\phi(0,x)=x$ as well as $\phi \bigl( s, \phi(t,x)\bigr)= \phi(s+t,x)$
for all $x\in X$ and $s,t\in \R$. Denoting the homeomorphism
$x\mapsto \phi(t,x )$ of $X$ simply by $\phi_t$ and the space of all
linear maps $A:X\to X$ by $\cL (X)$, as usual, call the flow
$\phi$ {\em linear\/} if each $\phi_t$ is linear, that is, $\phi_t \in
\cL(X)$ for every $t\in \R$. Given a linear flow $\phi$ on $X$, fix
any linear functional $H:\cL (X)\to \R$ and consider the function
$H(\phi_{\bullet})$. The main goal of this article is to completely
describe the distribution of numerical values  for the real-valued functions thus generated.

To see why this distribution may be of interest, recall that throughout
science and engineering, flows on the phase space $X=\R^d$ are often used to provide
models for real-worlds processes; e.g., see \cite{Amann}. From a scientist's or
engineer's perspective, it may not be desirable or even possible to
observe a flow $\phi$ in its entirety, especially if $d$ is large. Rather, what matters
is the behaviour of certain functions (``signals'') distilled from
$\phi$. Adopting terminology used similarly in e.g.\ quantum mechanics
and ergodic theory \cite{Che, GS}, call any function $h:X\to \R$ an {\em
  observable\/} (on $X$). With this, what really matters from a
scientist's or engineer's point of view are properties of signals
$h\bigl( \phi(\bullet ,x)\bigr)$ for specific observables $h$ and
points $x\in X$ that are relevant to the process being modelled by $\phi$. In
the case of linear flows, a special role is naturally played by {\em
  linear\/} observables. Note that if $\phi$ and $h$ both are
linear then $h\bigl( \phi(t,x)\bigr)\equiv H(\phi_t)$, where $H :
\cL(X) \to \R$ is the linear functional with $H(A)=
h(A x)$ for all $A\in \cL(X)$. Given any linear flow $\phi$ on $X$, it
makes sense, therefore, to more generally consider signals $ H(\phi_{\bullet})$
where $H:\cL (X)\to \R$ is any linear functional; by
a slight abuse of terminology, such functionals will henceforth be referred to as {\em linear observables\/}
$\bigl($on $\cL(X)\bigr)$ as well.

What, if anything, can be said about the distribution of values for
signals $H(\phi_{\bullet})$, where $\phi$ and $H$ are a linear
flow on $X$ and a linear observable on $\cL(X)$, respectively? As indicated below
and demonstrated rigorously through the results of this article, for the overwhelming majority
of linear flows this question has a surprisingly simple, though perhaps
somewhat counter-intuitive answer: Except for the trivial case of $H(\phi_{\bullet})= 0$, that
is, $H(\phi_t)=0$ for all $t\in \R$, the values of $H(\phi_{\bullet})$ always exhibit {\em one and the same\/}
distribution, regardless of $d$, $\phi$ and $H$.
As it turns out, this distinguished distribution is nothing other than {\em Benford's Law\/} (BL), the logarithmic law for significant
digits.

Within the study of (digits of) numerical data generated by dynamical
processes --- a
classical subject that continues to attract interest
from disciplines as diverse as ergodic and number theory
\cite{ARS,CK,DK,KM, LagSou}, analysis \cite{BumEll, MasSch}, and
statistics \cite{diek,GW,MiNi} --- the astounding ubiquity of BL is a
recurring, popular theme. The most well-known special case of BL is the so-called ({\em decimal\/})
{\em first-digit law\/} which asserts that
\begin{equation}\label{eq1}
\mathbb{P} (\mbox{\em leading digit}_{10}\, = \ell ) = \log_{10} \left( 1 +
  \ell ^{-1} \right)  \quad \forall \ell  = 1 , \ldots , 9 \, ,
\end{equation}
where {\em leading digit}$_{10}$ refers to the leading (or first
significant) decimal digit, and $\log_{10}$ is
the base-$10$ logarithm (see Section \ref{sec2} for rigorous definitions); for example, the leading decimal
digit of $e =2.718$ is $2$, whereas the leading digit of $-e^{e}=-15.15$ is
$1$. Note that (\ref{eq1}) is heavily skewed towards the smaller
digits: For instance, the leading decimal digit is almost six
times more likely to equal $1$ (probability $\log_{10} 2=30.10$\%) than to equal $9$ (probability $1- \log_{10} 9 =4.57$\%).
Ever since first recorded by Newcomb \cite{newcomb} in 1881
and re-discovered by Benford \cite{benford} in 1938, examples of
data and systems conforming to (\ref{eq1}) in one form or another
have been discussed extensively, notably for real-life data (e.g.\
\cite{doC, sam}) as well as in stochastic (e.g.\ \cite{schuerg}) and
deterministic processes (e.g.\ the Lorenz flow \cite{TBL} and certain
unimodal maps \cite{BHPUP, SCD}). As of this writing, an online database \cite{BOB} devoted exclusively to BL lists more
than 800 references.

Given any (Borel) measurable function $f:\R^+ \to \R$, arguably the simplest and most
natural notion of $f$ conforming to (\ref{eq1}) is to require that
\begin{equation}\label{eq2}
\lim\nolimits_{T\to +\infty} \frac{\lambda (\{  t \le T:
  \mbox{\em leading digit}_{10} f(t) =\ell \}) }{T} = \log_{10}
\left(1+\ell ^{-1} \right)  \quad \forall \ell = 1, \ldots, 9 \, ; 
\end{equation}
here and throughout,
$\lambda$ denotes Lebesgue measure on $\R^+$, or on parts thereof.
With this, the central question studied herein is this: Does
(\ref{eq2}) hold for $f= H(\phi_{\bullet})$ where $\phi$ is a linear
flow on $X$ and $H$ is any linear observable on $\cL(X)$?
Several attempts to answer this question are recorded in the
literature; e.g., see \cite{BDCDSA, KNRS, NSh, TBL}. All these attempts, however, seem to
have led only to {\em sufficient\/} conditions for (\ref{eq2}) that are
either restrictive or complicated to state. In contrast, Theorem
\ref{thm32} below, one of the main results of this article, provides a
simple {\em necessary and sufficient\/} condition for every non-trivial
signal $f=H(\phi_{\bullet})$ to satisfy (\ref{eq2}), and in fact to
conform to BL in an even stronger sense. All results in the
literature alluded to earlier are but
simple special cases of this theorem.

To see why it is plausible for signals $f=H(\phi_{\bullet})$ to
satisfy (\ref{eq2}), pick any real number $\alpha\ne 0$ and consider as an extremely
simple but also quite compelling example the function $f(t) =
e^{\alpha t}$. Obviously, $x=f$ is a solution of $\dot x = \alpha x$, and $f(t)
\equiv \phi_t \in \cL(\R^1)$ for the linear flow generated by this
differential equation. A short elementary calculation shows that, for
all $T>0$ and $1\le \ell \le 9$,
\begin{equation}\label{eq2a}
\left|
\frac{\lambda\left( \left\{ t \le T:
  \mbox{\em leading digit}_{10} e^{\alpha t} =\ell \right\}\right) }{T} - \log_{10}
\left( 1+ \ell^{-1} \right) 
\right| < \frac{1 }{|\alpha| T} \, ,
\end{equation}
and hence (\ref{eq2}) holds for $f(t)=e^{\alpha t}$ whenever $\alpha
\ne 0$. (Trivially, it does not hold if $\alpha =0$.) However, already
for the linear flow $\phi$ on $\R^2$ generated by
$$
\dot x = \left[
\begin{array}{rr}
\alpha & - \beta \\
\beta & \alpha
\end{array}
\right] x \, ,
$$
with $\alpha, \beta \in \R$ and $\beta > 0$, a brute-force calculation
is of little use in deciding whether all non-trivial signals $f=H(\phi_{\bullet})$
satisfy (\ref{eq2}). Theorem \ref{thm32} shows that indeed they do, provided that $\alpha \pi/ (\beta \ln 10)$ is irrational; see
Example \ref{exa34}. 

This article is organized as follows. Section \ref{sec2} introduces
the formal definitions and analytic tools required for the
analysis. In Section \ref{sec3}, the main results characterizing
conformance to BL for linear flows are stated and
proved, based upon a tailor-made notion of exponential nonresonance (Definition
\ref{def2m2}). Several examples are presented in order to illustrate this
notion as well as the main results. Section \ref{sec4} establishes the
fact that, as suggested by the simple examples in the preceding paragraph, exponential
nonresonance, and hence conformance to BL as well, is generic for
linear flows on $\R^d$. Given the widespread use of 
linear differential equations as models throughout the sciences, the
results of this article may contribute to a better understanding of,
and deeper appreciation for, BL and its applications across a wide range of disciplines. 

\section{Definitions and tools}\label{sec2}

The following, mostly standard notation and
terminology is used throughout. The symbols $\N$, $\Z^+$, $\Z$, $\Q$,
$\R^+$, $\R$, and $\C$ denote the sets of, respectively, positive
integer, non-negative integer, integer, rational, non-negative real, real,
and complex numbers, and $\varnothing$ is the empty set. Recall that Lebesgue
measure on $\R^+$ or subsets thereof is written simply as $\lambda$. For each
integer $b\ge 2$, the logarithm base $b$ of $x>0$ is denoted
$\log_b x$, and $\ln x$ is the natural logarithm (base $e$) of $x$; for
convenience, let $\log_b0:=0$ for every $b$, and $\ln 0:= 0$. Given any $x\in \R$, the
largest integer not larger than $x$ is symbolized by $\lfloor x
\rfloor$. The real part, imaginary part, complex conjugate, and
absolute value (modulus) of any $z\in \C$ is $\Re z$, $\Im z$,
$\overline{z}$, and $|z|$, respectively. For each $z\in \C\setminus
\{0\}$, there exists a unique number $-\pi < \arg z \le \pi$ with $z =
|z|e^{\imath \arg z}$. Given any
$w\in \C$ and $\cZ\subset \C$, define $w + \cZ:= \{w+z : z\in \cZ\}$
and $w\cZ:= \{wz:z\in \cZ\}$. Thus with the unit circle $\mS:=\{ z\in \C : |z|=1\}$,
for example, $w+ \mS= \{z\in \C: |z-w|=1\}$ and $w\mS = \{z\in \C :|z| =|w|\}$ for each $w\in \C$.
The cardinality (number of
elements) of any finite set $\cZ\subset \C$ is $\#\cZ$.

Recall throughout that $b$ is an integer with $b\ge 2$, informally
referred to as a {\em base}. Given a base $b$ and any $x\ne 0$, there exists a unique
real number $S_b(x)$ with $1\le S_b(x)<b$ and a unique integer $k$ such that $|x|=
S_b(x)b^k$. The number $S_b(x)$ is the
{\em significand} or {\em mantissa} (base $b$) of $x$; for
convenience, define $S_b(0):=0$ for every base $b$. The integer $\lfloor
S_b(x) \rfloor$ is the {\em first significant digit\/} (base $b$) of
$x$; note that $\lfloor S_b(x)\rfloor \in \{1, \ldots , b-1\}$ whenever
$x\ne 0$. 

In this article, conformance to BL for real-valued functions,
specifically for signals generated by linear flows, is studied via the
following definition.

\begin{defn}
Let $b\in \N \setminus \{1\}$. A (Borel) measurable function $f:\R^+ \to \R$ is a $b\,$-{\em Benford function}, or
$b\,$-{\em Benford\/} for short, if
$$
\lim\nolimits_{T\to +\infty} \frac{\lambda \bigl( \bigl\{ t \le T  :
  S_b\bigl( f(t) \bigr) \le
  s \bigr\}\bigr)}{T} = \log_b s  \quad \forall s\in [1,b)  \, .
$$
The function $f$ is a {\em Benford function}, or simply {\em
  Benford}, if it is $b\,$-Benford for every $b\in \N \setminus \{1\}$.
\end{defn}

Note that (\ref{eq2}) holds whenever $f$ is
$10$-Benford. The converse is not true in general since, for
instance, the (piecewise constant) function $\lfloor S_{10}(2^{\bullet})
\rfloor$ only attains the values $1, \ldots, 9$ and hence clearly is
not $10$-Benford, yet (\ref{eq2a}) with $\alpha = \ln 2$ shows that it does
satisfy (\ref{eq2}).

The subsequent analysis of the Benford property for signals generated
by linear flows is greatly facilitated by a few basic facts from the
theory of uniform distribution, reviewed here for the reader's
convenience; e.g., see \cite{DT, KN} for authoritative accounts of the
subject. Throughout, the symbol $d$ denotes a positive integer, usually unspecified or clear
from the context. The $d$-dimensional torus $\R^d /\Z^d$ is symbolized by
$\T^d$, its elements being represented as $\langle x \rangle = x+\Z^d$
with $x\in \R^d$; for simplicity write $\T$ instead of $\T^1$. Endow
$\T^d$ with its usual (quotient) topology and $\sigma$-algebra $\cB(\T^d)$
of Borel sets, and let $\cP(\T^d)$ be the set of all probability
measures on $\bigl( \T^d , \cB (\T^d)\bigr)$. Denote the Haar
(probability) measure of the compact Abelian group $\T^d$ by
$\lambda_{\T^d}$. Call a set $\cJ\subset \T$ an {\em arc} if $\cJ=\langle
\cI \rangle := \{\langle x \rangle : x \in \cI\}$ for some interval
$\cI\subset \R$. With this, a (Borel) measurable function
$f:\R^+ \to \R$ is {\em continuously uniformly distributed modulo one}, henceforth
abbreviated {\em c.u.d.}\ mod $1$, if
$$
\lim\nolimits_{T\to +\infty} \frac{\lambda \bigl( \bigl\{   t  \le T : \langle f(t)
  \rangle \in \cJ\bigr\} \bigr)}{T} = \lambda_{\T}(\cJ) \quad \mbox{\rm for every
  arc $\cJ\subset \T$}\, .
$$
Equivalently, $\lim_{T\to +\infty} \frac1{T}
\int_0^T F\bigl(\langle f(t) \rangle\bigr)\, {\rm d}t = \int_{\T} F \, {\rm
  d}\lambda_{\T}$ for every continuous (or just Riemann
integrable) function $F:\T \to \C$. In particular, therefore, if the
function $f$
is c.u.d.\ mod $1$ then $\lim_{T\to +\infty} \frac1{T}\int_0^T e^{2\pi
\imath k f(t)}{\rm d}t =0$ for every $k\in \Z \setminus \{0\}$, and
the converse is also true (Weyl's criterion \cite[Thm.I.9.2]{KN}).

The importance of uniform distribution concepts for the present
article stems from the following fact which, though very simple, is
nevertheless fundamental for all that follows.

\begin{prop}\label{prop_dia}{\rm $\!\!$\cite[Thm.1]{Dia}}
Let $b\in \N\setminus\{1\}$. A measurable function $f:\R^+ \to \R$ is $b\,$-Benford if and
only if the function $\log_b|f|$ is c.u.d.\ {\rm mod} $1$.
\end{prop}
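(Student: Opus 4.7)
The plan is to reduce the equivalence to the elementary identity $S_b(x) = b^{\{\log_b |x|\}}$ valid for every $x \ne 0$, where $\{y\} := y - \lfloor y \rfloor$ denotes the fractional part; this is immediate from the definitions of $S_b$ and $\lfloor \cdot \rfloor$. Its consequence is that for every $s \in [1, b)$ and every $t$ with $f(t) \ne 0$, the events $\{S_b(f(t)) \le s\}$ and $\{\langle \log_b|f(t)| \rangle \in \langle [0, \log_b s] \rangle\}$ coincide, so up to a set where $f$ vanishes, the $b$-Benford property is simply the c.u.d.\ mod $1$ condition restricted to initial arcs of $\T$.

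Before invoking this identity one has to address the zero set $Z := \{t \ge 0 : f(t) = 0\}$, on which both $S_b(f)$ and $\log_b|f|$ are declared to equal $0$ by convention. I would first check that each side of the claimed equivalence automatically forces $\lim_{T \to +\infty} T^{-1} \lambda(Z \cap [0, T]) = 0$. If $f$ is $b$-Benford, the inclusion $Z \cap [0, T] \subseteq \{t \le T : S_b(f(t)) \le s\}$ yields $\limsup_{T \to +\infty} T^{-1} \lambda(Z \cap [0, T]) \le \log_b s$ for every $s \in [1, b)$, and sending $s \downarrow 1$ forces the density of $Z$ to vanish. If instead $\log_b|f|$ is c.u.d.\ mod $1$, then $\langle \log_b|f(t)|\rangle = \langle 0 \rangle$ throughout $Z$, so applying c.u.d.\ to the arcs $\langle[-\ep, \ep]\rangle$ with $\ep \downarrow 0$ yields the same conclusion.

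With $Z$ asymptotically negligible, the equivalence becomes a clean translation via the identity above. The direction c.u.d.\ $\Rightarrow$ $b$-Benford is immediate, since the c.u.d.\ hypothesis applied to the arc $\langle [0, \log_b s]\rangle$ produces exactly the required Benford limit with value $\lambda_{\T}(\langle [0, \log_b s]\rangle) = \log_b s$. For the converse, the $b$-Benford property initially gives the correct asymptotic density only on the initial arcs $\langle [0, r]\rangle$ with $r \in [0, 1)$; to extend to an arbitrary arc $\langle [\alpha, \beta]\rangle$ with $0 \le \alpha < \beta \le 1$, I would write it as the difference $\langle [0, \beta]\rangle \setminus \langle [0, \alpha]\rangle$ and subtract the two density limits, and treat arcs crossing $\langle 0 \rangle$ by splitting. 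I expect the main obstacle, modest as it is, to lie purely in the careful handling of $Z$ in paragraph two, because the conventions $S_b(0) := 0$ and $\log_b 0 := 0$ assign values outside the natural ranges; once $Z$ is ruled negligible, the argument collapses to the change of variables $s = b^r$ together with the elementary approximation of arbitrary arcs by differences of initial arcs.
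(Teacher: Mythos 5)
Your proof is correct, and it supplies more than the paper itself does: the paper gives no proof of Proposition~\ref{prop_dia} but simply cites Diaconis \cite{Dia}, whose Theorem~1 concerns sequences; the continuous-time version stated here is precisely the adaptation your argument carries out via the identity $S_b(x)=b^{\{\log_b|x|\}}$. One small economy you may have missed: the separate density estimate for $Z=\{t\ge 0:f(t)=0\}$ is not actually needed, because under the conventions $S_b(0)=0$ and $\log_b 0=0$ the sets $\{t\le T: S_b(f(t))\le s\}$ and $\{t\le T:\langle\log_b|f(t)|\rangle\in\langle[0,\log_b s]\rangle\}$ coincide \emph{exactly} for every $s\in[1,b)$ --- $Z$ is contained in both, since $0\le s$ and $\langle 0\rangle\in\langle[0,\log_b s]\rangle$ hold automatically --- so c.u.d.\ $\Rightarrow$ Benford is a literal translation, and in the converse direction the $Z$-terms cancel in the arc subtraction. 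When extending from closed initial arcs to arbitrary arcs you do also need that the preimage of each singleton arc $\langle\{\alpha\}\rangle$ has density zero, so that open versus closed endpoints and wrap-around arcs can be handled uniformly; this follows from the nested-arc squeeze you indicate, and with that sentence made explicit the argument is complete.
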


In order to enable the effective application of Proposition
\ref{prop_dia}, a few basic facts from the theory of
uniform distribution are re-stated here. In this context, the
following discrete-time analogue of continuous uniform distribution is
also useful: A sequence $(x_n)$ of real numbers, by definition, is {\em uniformly
  distributed modulo one\/} ({\em u.d.}\ mod $1$) if the (piecewise
constant) function $f = x_{1 + \lfloor \bullet \rfloor}$ is c.u.d.\ mod
$1$, or equivalently, if
$$
\lim\nolimits_{N\to \infty} \frac{\# \{  n \le N : \langle x_n
  \rangle \in \cJ\}}{N} = \lambda_{\T}(\cJ) \quad \mbox{\rm for every
  arc $\cJ\subset \T$}\, .
$$

\begin{lem}\label{lem200}
For each measurable function
$f:\R^+ \to \R$ the following are equivalent:
\begin{enumerate}
\item $f$ is c.u.d.\ {\rm mod} $1$;
\item If $g:\R^+ \to \R$ is measurable and
  $\lim_{t\to +\infty}\bigl( g(t) - f(t)\bigr)$ exists (in $\R$) then $g$ is c.u.d.\ {\rm mod} $1$;
\item $kf$ is c.u.d.\ {\rm mod} $1$ for every $k\in \Z \setminus
  \{0\}$;
\item $f + \alpha \ln t$ is c.u.d.\ {\rm mod} $1$ for every $\alpha
  \in \R$.
\end{enumerate}
\end{lem}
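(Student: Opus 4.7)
The plan is to reformulate c.u.d.\ mod $1$ via Weyl's criterion as the decay
$$
I_k(T;f) := \frac{1}{T}\int_0^T e^{2\pi\imath k f(t)}\, {\rm d}t \to 0 \qquad \text{for every } k \in \Z\setminus\{0\},
$$
and then to read off each equivalence from the behaviour of these integrals. The directions (ii)$\Rightarrow$(i) and (iv)$\Rightarrow$(i) are immediate on setting $g=f$ (so $g-f\equiv 0$) and $\alpha=0$, respectively. For (i)$\Leftrightarrow$(iii), observe that $I_m(T;kf)=I_{mk}(T;f)$ and $\{mk:m\in\Z\setminus\{0\}\}=\Z\setminus\{0\}$ for each fixed $k\ne 0$, so the two families of Weyl sums coincide.

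For (i)$\Rightarrow$(ii), let $c=\lim_{t\to+\infty}\bigl(g(t)-f(t)\bigr)\in\R$ and decompose
$$
e^{2\pi\imath k g(t)} = e^{2\pi\imath k c}\,e^{2\pi\imath k f(t)} + e^{2\pi\imath k f(t)}\bigl[e^{2\pi\imath k(g(t)-f(t)-c)}-1\bigr].
$$
The Ces\`aro average of the first summand is $e^{2\pi\imath k c}I_k(T;f)\to 0$ by assumption; the bracketed factor in the second summand tends to $0$ as $t\to+\infty$, and a standard split-at-$t_0$ argument (using the trivial bound $2$ on $[0,t_0]$ and $\ep$ on $[t_0,\infty)$) shows that its Ces\`aro average is $o(1)$ as well. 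Hence $I_k(T;g)\to 0$.

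The substantive direction is (i)$\Rightarrow$(iv). Fix $k\in\Z\setminus\{0\}$ and $\alpha\in\R$, and set $\beta:=2\pi k\alpha$, $F(t):=e^{2\pi\imath k f(t)}$, and $G(T):=\int_0^T F(s)\,{\rm d}s$. Since $e^{2\pi\imath k\alpha\ln t}=t^{\imath\beta}$, the target Weyl average equals $T^{-1}\int_0^T F(t)\,t^{\imath\beta}\,{\rm d}t$. The case $\beta=0$ reduces to (i); assume $\beta\ne 0$. Integration by parts on $[1,T]$ (using that $G$ is absolutely continuous with $G'=F$ a.e.) gives
$$
\int_1^T F(t)\,t^{\imath\beta}\,{\rm d}t = G(T)T^{\imath\beta} - G(1) - \imath\beta\int_1^T G(t)\,t^{\imath\beta-1}\,{\rm d}t.
$$
By (i), $G(T)=o(T)$, so the boundary terms contribute $o(1)$ after division by $T$. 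For the remainder, use $|t^{\imath\beta-1}|=t^{-1}$: given $\ep>0$, pick $t_0$ with $|G(t)|\le\ep t$ for $t\ge t_0$, and bound
$$
\frac{1}{T}\int_1^T \frac{|G(t)|}{t}\,{\rm d}t \le \frac{1}{T}\int_1^{t_0}\frac{|G(t)|}{t}\,{\rm d}t + \frac{\ep(T-t_0)}{T} \le \ep + O(1/T),
$$
which tends to $\ep$; since $\ep$ was arbitrary, this piece is $o(1)$. Together with the trivial $O(1/T)$ contribution from $[0,1]$, this gives (iv).

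The main obstacle is (i)$\Rightarrow$(iv). The modulator $t^{\imath\beta}$ does not approach any limit, so (ii) does not apply; moreover, $\alpha\ln t$ is itself \emph{not} c.u.d.\ mod $1$, ruling out any simple additive reduction. Integration by parts circumvents both difficulties by transferring the derivative onto $t^{\imath\beta-1}$, producing the weight $t^{-1}$ whose slow decay combines with the quantitative estimate $G(T)=o(T)$ exactly so as to drive the Ces\`aro average to zero.
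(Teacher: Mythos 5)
Your argument is correct and genuinely different in character from the paper's, which gives no proof at all but only citations: it notes that (ii), (iii), (iv) each trivially implies (i) by specialization, and for the converses refers to \cite[Exc.\ I.9.4]{KN}, \cite[Exc.\ I.9.6]{KN}, and \cite[Lem.\ 2.8]{BDCDSA}. You instead derive all three implications directly from Weyl's criterion. The steps (i)$\Rightarrow$(ii) (absorb the limit into a unimodular factor, kill the remainder by a split-at-$t_0$ estimate) and (i)$\Rightarrow$(iii) (relabel frequencies) are routine; the substantive contribution is (i)$\Rightarrow$(iv), where integrating the Weyl sum by parts against $t^{\imath\beta}$ transfers the derivative onto the modulator, produces the integrable weight $t^{-1}$, and lets the quantitative bound $G(T)=o(T)$ close the argument. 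That step is carried out correctly: the boundary terms are $o(1)$ after dividing by $T$, the $[0,1]$ contribution is $O(1/T)$, and the $\ep$-split on $\frac{1}{T}\int_1^T|G(t)|/t\,{\rm d}t$ finishes it.

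Two small slips, neither affecting validity. First, in (i)$\Leftrightarrow$(iii) you assert $\{mk:m\in\Z\setminus\{0\}\}=\Z\setminus\{0\}$ for each fixed $k\ne 0$; this is false whenever $|k|\ge 2$ (for $k=2$ it produces only the nonzero even integers), so the two families of Weyl sums do \emph{not} coincide. What your reindexing identity $I_m(T;kf)=I_{mk}(T;f)$ actually needs is only the inclusion $\{mk:m\ne 0\}\subset\Z\setminus\{0\}$ for (i)$\Rightarrow$(iii), together with the trivial specialization $k=1$ for (iii)$\Rightarrow$(i). Second, the displayed decomposition in (i)$\Rightarrow$(ii) drops a unimodular factor: the correct identity is
$$
e^{2\pi\imath k g(t)} - e^{2\pi\imath k c}\,e^{2\pi\imath k f(t)} = e^{2\pi\imath k c}\,e^{2\pi\imath k f(t)}\bigl[e^{2\pi\imath k(g(t)-f(t)-c)}-1\bigr] \, .
$$
Since $|e^{2\pi\imath k c}|=1$, your bound on the remainder is unchanged and the conclusion stands.
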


\begin{proof}
Clearly, (ii), (iii), and (iv) each implies
(i), and the converse is \cite[Exc.I.9.4]{KN},
\cite[Exc.I.9.6]{KN}, and \cite[Lem.2.8]{BDCDSA}, respectively.
\end{proof}

\noindent
The next result is a slight generalization of \cite[Thm.I.9.6(b)]{KN}.

\begin{lem}\label{lem230}
Let the function $f:\R^+ \to \R$ be measurable, and $\delta_0 > 0$. If, for some
measurable, bounded $F:\T\to \C$ and $z\in \C$,
$$
\lim\nolimits_{N\to \infty} \frac1{N}\sum\nolimits_{n=1}^N F \bigl(
\langle f (n \delta) \rangle \bigr) =  z  \quad \mbox{\rm for
  almost all } 0<\delta < \delta_0 \, ,
$$
then also
\begin{equation}\label{eqTU}
\lim\nolimits_{T\to +\infty} \frac{1}{T}\int\nolimits_0^T F \bigl(
\langle f(t)\rangle\bigr)\, {\rm d}t = z \, .
\end{equation}
In particular, if the sequence $\bigl(
f(n\delta)\bigr)$ is u.d.\ {\rm mod} $1$ for almost all $0<\delta <
\delta_0$ then $f$ is c.u.d.\ {\rm mod} $1$.
\end{lem}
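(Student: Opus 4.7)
The plan is to reduce the continuous-time average to a Ces\`aro average along a single arithmetic progression and then apply a classical Tauberian theorem. Setting $g := F(\langle f(\cdot)\rangle)$ (measurable, bounded by $M := \sup|F|$), $G(T) := \int_0^T g(t)\, {\rm d}t$ and $h(T) := G(T)/T$, the goal is to show $h(T) \to z$. By hypothesis, $S_N(\delta) := \frac{1}{N}\sum_{n=1}^N g(n\delta) \to z$ for a.e.\ $\delta \in (0, \delta_0)$, and $|S_N(\delta)| \le M$. Bounded convergence therefore yields $\int_0^{\delta_0} S_N(\delta)\, {\rm d}\delta \to z\delta_0$; applying Fubini to the left side,
\[
\int_0^{\delta_0} S_N(\delta)\, {\rm d}\delta = \frac{1}{N}\sum_{n=1}^N \int_0^{\delta_0} g(n\delta)\, {\rm d}\delta = \frac{1}{N}\sum_{n=1}^N \frac{G(n\delta_0)}{n} = \frac{\delta_0}{N}\sum_{n=1}^N h(n\delta_0),
\]
so the sequence $s_n := h(n\delta_0)$ is Ces\`aro summable to $z$.

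Next I would observe that $h$ is \emph{slowly oscillating}: since $G$ is $M$-Lipschitz, a short calculation gives $|h(T_2) - h(T_1)| \le 2M(T_2 - T_1)/T_2$ for $0 < T_1 \le T_2$. In particular $|s_n - s_{n-1}| \le 2M/n$, which is Hardy's Tauberian condition. Hence the Ces\`aro convergence of $(s_n)$ forces $s_n = h(n\delta_0) \to z$, and to lift this to $h(T) \to z$ as $T \to +\infty$, choose for each $T$ the integer $n_T$ with $|T - n_T \delta_0| \le \delta_0/2$; then $|h(T) - h(n_T \delta_0)| \le M\delta_0/T \to 0$, while $h(n_T \delta_0) \to z$, proving (\ref{eqTU}).

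For the ``in particular'' statement, if $(f(n\delta))$ is u.d.\ mod $1$ for a.e.\ $\delta \in (0, \delta_0)$, then by the discrete Weyl criterion $\frac{1}{N}\sum_{n=1}^N e^{2\pi\imath k f(n\delta)} \to 0$ for each $k \in \Z \setminus \{0\}$ and a.e.\ such $\delta$. Applying the first half of the lemma with $F(\langle x\rangle) := e^{2\pi\imath k x}$ and $z = 0$ gives $\frac{1}{T}\int_0^T e^{2\pi\imath k f(t)}\, {\rm d}t \to 0$ for every such $k$, and Weyl's criterion in continuous form (recalled just before the lemma) concludes that $f$ is c.u.d.\ mod $1$.

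The main obstacle is the Tauberian step: while the Fubini/DCT manipulation converting the averaged hypothesis into Ces\`aro convergence of $(h(n\delta_0))$ is essentially routine, the passage from Ces\`aro to ordinary convergence requires both the slow oscillation of $h$ and Hardy's Tauberian theorem. Once that is in hand, propagation from the lattice $\{n\delta_0\}$ to all $T \to +\infty$ follows immediately from slow oscillation.
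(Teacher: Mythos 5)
Your proof is correct and takes essentially the same route as the paper's: average the hypothesis over $\delta\in(0,\delta_0)$ by dominated convergence, use the change of variables $t=n\delta$ to recast the result as a Ces\`aro average of Ces\`aro averages, and invoke Hardy's Tauberian theorem for $(C,1)$-summability under the $O(1/n)$ condition (which is precisely \cite[Thm.92]{Har}, the reference the paper cites), with your slow-oscillation estimate being the explicit verification of that condition. The only difference is expository: the paper works with the block integrals $z_n=\int_{(n-1)\delta_0}^{n\delta_0}F(\langle f\rangle)$ and leaves the Tauberian hypothesis and the passage from the lattice $\{n\delta_0\}$ to all $T\to+\infty$ implicit (both follow from boundedness of $F$), whereas you phrase everything in terms of $h(T)=G(T)/T$ and spell those two steps out.
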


\begin{proof}
For each $n\in \N$, let $z_n =
\int_{\delta_0(n-1)}^{\delta_0 n}  F \bigl( \langle f(t) \rangle \bigr)\, {\rm
  d}t$. By the Dominated Convergence Theorem,
$$
\lim\nolimits_{N\to \infty} \frac1{\delta_0}
\int\nolimits_{0}^{\delta_0} \frac1{N} \sum\nolimits_{n=1}^N   F \bigl(
\langle f (n \delta) \rangle \bigr) \, {\rm d} \delta = z \, .
$$
On the other hand,
\begin{align*}
\frac1{\delta_0} \int_0^{\delta_0} \frac1{N} \sum\nolimits_{n=1}^N   F \bigl(
\langle f (n \delta) \rangle \bigr) \, {\rm d} \delta & = \frac1{\delta_0N}
\sum\nolimits_{n=1}^N \frac1{n} \int_{0}^{\delta_0 n} F \bigl( \langle
f(t) \rangle \bigr)\, {\rm d}t \\[1mm]
& = \frac1{\delta_0 N} \sum\nolimits_{n=1}^N \frac1{n}
\sum\nolimits_{\ell =1}^n z_{\ell} \, ,
\end{align*}
and since the sequence $(z_n)$ is bounded, a well-known Tauberian
theorem \cite[Thm.92]{Har} implies that
\begin{align*}
z & = \lim\nolimits_{N\to
  \infty} \frac1{\delta_0 N} \sum\nolimits_{n=1}^N z_n =
\lim\nolimits_{N\to \infty} \frac1{\delta_0 N} \int_0^{\delta_0 N}
F \bigl( \langle f(t) \rangle \bigr)\, {\rm d}t \\[1mm]
& = \lim\nolimits_{T\to +\infty} \frac1{T} \int_0^T F \bigl( \langle
f(t) \rangle \bigr) \, {\rm d}t \, .
\end{align*}
The second assertion now follows immediately by considering
specifically the functions $F(\langle x \rangle)= e^{2\pi \imath kx}$ for
$k\in \Z$, together with Weyl's criterion.
\end{proof}

The following result pertains to very particular functions that map $\T^d$
into $\T$; such functions will appear naturally in the next
section. Concretely, let $p_1, \ldots , p_d\in \Z$ and $\alpha
\in \R \setminus \{0\}$, and consider the function
$$
P_u  :
\left\{
\begin{array}{ccl}
\T^d &  \to  & \T \, , \\
\langle x\rangle & \mapsto  &  \big\langle p_1 x_1 + \ldots + p_d x_d + \alpha \ln | u_1
\cos (2\pi x_1) + \ldots + u_d \cos (2\pi x_d)|\big \rangle   \, ;
\end{array}
\right.
$$
here $u\in \R^d\setminus \{0\}$ may be thought of as a parameter. (Recall the
convention that $\ln 0 = 0$.) Note that $P_u$ is measurable (in fact,
differentiable $\lambda_{\T^d}$-a.e.), and so each $\mu \in \cP
(\T^d)$ induces a well-defined element $\mu \circ P_u^{-1}$ of
$\cP(\T)$, via $\mu \circ P_u^{-1} (B):= \mu \bigl( P_u^{-1}
(B)\bigr)$ for all $B\in \cB(\T)$. It is easy to see that $\mu \circ
P_u^{-1}$ is absolutely continuous (w.r.t.\ $\lambda_{\T}$) whenever
$\mu$ is absolutely continuous (w.r.t.\ $\lambda_{\T^d}$). For
the purpose of this work, only the case $\mu = \lambda_{\T^d}$ is of
further interest. Observe that $\lambda_{\T^d}\circ P_u^{-1}$ is
equivalent to (i.e., has the same nullsets as)
$\lambda_{\T}$. Moreover, for $\cP(\T)$ endowed with the
topology of weak convergence, the Dominated Convergence Theorem implies
that the $\cP(\T)$-valued function $u\mapsto
\lambda_{\T^d} \circ P_u^{-1}$ is continuous on $\R^d \setminus \{0\}$, for any fixed $p_1, \ldots, p_d\in \Z$ and $\alpha \in \R
\setminus \{0\}$. The arguments in \cite[Sec.5]{BE-JDDE} show that this function is non-constant, as might be expected.

\begin{prop}{\rm $\!\!$\cite[Thm.5.4]{BE-JDDE}}\label{propnonuni}
Given $p_1, \ldots, p_d\in \Z$, $\alpha \in \R \setminus \{0\}$, and
any $\nu \in \cP(\T)$, there exists $u\in \R^d\setminus \{0\}$ such that $\lambda_{\T^d} \circ P_u^{-1}\ne \nu$.
\end{prop}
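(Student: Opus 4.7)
My plan is to reduce the claim to showing that the map $u \mapsto \lambda_{\T^d}\circ P_u^{-1}$ from $\R^d\setminus\{0\}$ into $\cP(\T)$ is not constant, and then to establish non-constancy by an explicit Fourier computation. The crucial observation is that for any $c\ne 0$ and $u\in\R^d\setminus\{0\}$, a direct calculation gives
\[
P_{cu}(\langle x\rangle)=\bigl\langle P_u(\langle x\rangle)+\alpha\ln|c|\bigr\rangle,
\]
so $\lambda_{\T^d}\circ P_{cu}^{-1}$ is the rotation of $\lambda_{\T^d}\circ P_u^{-1}$ by $\alpha\ln|c|$, and as $c$ ranges over $\R\setminus\{0\}$ this rotation angle sweeps all of $\R$. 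Since the only element of $\cP(\T)$ fixed under every rotation is $\lambda_\T$, the hypothetical failure of the proposition for some $\nu$ would force $\nu=\lambda_\T$. Hence everything reduces to producing a single $u^*\in\R^d\setminus\{0\}$ with $\lambda_{\T^d}\circ P_{u^*}^{-1}\ne\lambda_\T$: the scaling orbit $\{cu^*:c>0\}$ then yields a whole circle of distinct image measures, at most one of which can coincide with any prescribed $\nu$.

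By Weyl's criterion, exhibiting such a $u^*$ is equivalent to finding some $k\in\Z\setminus\{0\}$ with
\[
J_k(u^*):=\int_{\T^d}e^{2\pi ik\,p\cdot x}\bigl|u^*\!\cdot\!\cos(2\pi x)\bigr|^{2\pi ik\alpha}\,dx\ne 0.
\]
The simplest trial is $u^*=e_j$, a coordinate vector, for which Fubini separates
\[
J_k(e_j)=\Bigl(\prod\nolimits_{l\ne j}\delta_{kp_l,0}\Bigr)\int_0^1 e^{2\pi ikp_j y}\,|\cos(2\pi y)|^{2\pi ik\alpha}\,dy.
\]
Whenever $p$ has at most one nonzero coordinate $p_{j_0}$ (including the case $p=0$), I pick $j=j_0$; the product of Kronecker deltas is then $1$, and using the $1/2$-periodicity of $|\cos(2\pi\cdot)|$ together with the substitution $\theta=2\pi y$ and the Beta-function identity $\int_0^{\pi/2}\cos^s\theta\,d\theta=\tfrac{\sqrt\pi}{2}\Gamma(\tfrac{s+1}{2})/\Gamma(\tfrac{s}{2}+1)$, the remaining integral is a ratio of Gamma values evaluated at arguments with nonzero imaginary part, which is nonzero for suitable $k$ since $\Gamma$ has no zeros.

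When $p$ has two or more nonzero entries, every axis-aligned $u^*=e_j$ annihilates $J_k$, and a genuinely multi-dimensional computation is required. One route is via the critical-point structure of $P_u$: setting the gradient to zero yields the system
\[
u_j\sin(2\pi x_j)=\frac{p_j\,u\cdot\cos(2\pi x)}{2\pi\alpha},\qquad j=1,\ldots,d,
\]
which for generic $u$ admits non-degenerate solutions; at each such critical point the (continuous, positive) density of $\lambda_{\T^d}\circ P_u^{-1}$ with respect to $\lambda_\T$ develops a characteristic algebraic or logarithmic singularity at the corresponding critical value, preventing the density from being identically $1$. An alternative route is to introduce a small-perturbation parameter, e.g.\ $u^*=e_1+\epsilon v$, and expand $J_k(u^*)$ in powers of $\epsilon$; even when the leading term vanishes, the $\epsilon$-correction couples distinct coordinates in a way that produces nonvanishing coefficients for appropriate $k$. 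The main obstacle is precisely verifying that the interactions between multiple nonzero $p_j$ do not conspire to cancel the singularities or the sub-leading Fourier terms in every case; this delicate stationary-phase/Morse-theoretic verification is carried out in full in \cite[Sec.5]{BE-JDDE}, from which the proposition is imported.
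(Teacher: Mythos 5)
The paper does not prove this proposition: it is imported verbatim from \cite[Thm.5.4]{BE-JDDE}, with only the surrounding remark that the arguments of \cite[Sec.5]{BE-JDDE} establish non-constancy of $u\mapsto\lambda_{\T^d}\circ P_u^{-1}$. Your scaling identity $P_{cu}(\langle x\rangle)=\langle P_u(\langle x\rangle)+\alpha\ln|c|\rangle$ holds off the $\lambda_{\T^d}$-nullset where $u_1\cos(2\pi x_1)+\ldots+u_d\cos(2\pi x_d)=0$ (note the convention $\ln 0=0$), which is all one needs at the level of pushforward measures, and the resulting reduction is correct and worth recording: the only candidate constant value of $u\mapsto\lambda_{\T^d}\circ P_u^{-1}$ is $\lambda_{\T}$, so the claim collapses to exhibiting one $u^*$ with $\lambda_{\T^d}\circ P_{u^*}^{-1}\ne\lambda_{\T}$.

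The genuine gap is that you do not actually produce such a $u^*$ in the case that matters. When $p$ has at most one nonzero entry $p_{j_0}$, the one-dimensional Fourier computation can be made to work, but the Beta-function identity you quote, $\int_0^{\pi/2}\cos^s\theta\,d\theta=\tfrac{\sqrt{\pi}}{2}\,\Gamma(\tfrac{s+1}{2})/\Gamma(\tfrac{s}{2}+1)$, is the $p_{j_0}=0$ case; for $p_{j_0}\ne 0$ the period-$\tfrac12$ decomposition introduces a factor $1+e^{i\pi kp_{j_0}}$, so you must choose $k$ with $kp_{j_0}$ even, and the correct value is $\Gamma(s+1)/\bigl(2^s\,\Gamma(1+\tfrac{s}{2}+m)\,\Gamma(1+\tfrac{s}{2}-m)\bigr)$ with $s=2\pi ik\alpha$ and $2m=kp_{j_0}$ --- still nonzero, since $\Gamma$ has no zeros and these arguments have nonzero imaginary part, but not the formula you wrote. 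More importantly, when $p$ has two or more nonzero entries --- which is precisely the content of the paper's appeal to \cite[Sec.5]{BE-JDDE} --- your proposal offers only unworked heuristics (singularities of the pushforward density at critical values; a small-$\epsilon$ expansion of $J_k$) and then defers entirely to \cite{BE-JDDE}. You give no argument that the interactions between several nonzero $p_j$ cannot cancel the relevant Fourier coefficients or singularities, and you acknowledge as much. To be fair, the paper under review makes exactly the same deferral, so your proposal is no less self-contained than the paper's own treatment --- but neither constitutes a proof.
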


\begin{rem}
Specifically for the case $\nu = \lambda_{\T}$, it has been conjectured in \cite{BE-JDDE} that $\lambda_{\T^d}\circ
P_u^{-1} = \lambda_{\T}$ (if and) only if $\prod_{j: p_j \ne 0} u_j
=0$, and hence $\lambda_{\T^d} \circ P_u^{-1} \ne \lambda_{\T}$ for
{\em most\/} $u\in \R^d\setminus \{0\}$.
\end{rem}

The remainder of this section reviews tools and terminology concerning
certain elementary number-theoretical properties of sets $\cZ \subset
\C$. Specifically, denote by $\mbox{\rm span}_{\Q}\cZ$
the smallest subspace of $\C$ (over $\Q$) containing $\cZ$;
equivalently, if $\cZ\neq \varnothing$ then $\mbox{\rm span}_{\Q}\cZ$ is the set of all finite
{\em rational\/} linear combinations of elements of $\cZ$, i.e.,
$$
\mbox{\rm span}_{\Q}\cZ = \bigl\{  \rho_1 z_1 + \ldots + \rho_n z_n
: n \in \N , \rho_1, \ldots , \rho_n \in \Q , z_1, \ldots , z_n
\in \cZ \bigr\} \, ;
$$
note that $\mbox{\rm span}_{\Q}\varnothing=\{0\}$. With this terminology,
recall that $z_1, \ldots, z_L\in \C$ are $\Q$-{\em independent\/} if
$\mbox{\rm span}_{\Q}\{z_1, 
\ldots, z_L\}$ is $L$-dimension\-al, or equivalently if $\sum_{\ell
  =1}^{L} p_{\ell } z_{\ell }= 0$ with
integers $p_1, \ldots , p_L$ implies $p_1 = \ldots = p_L=0$. The
notion of $\Q$-independence is crucial for the distribution mod $1$ of
certain sequences and functions, and hence, via Proposition
\ref{prop_dia}, also for the study of BL. A simple but useful fact in
this regard is as follows.

\begin{prop}{\rm $\!\!$\cite[Lem.2.6]{BE-JDDE}}\label{propudisc}
Let $\vartheta_0, \vartheta_1, \ldots, \vartheta_d\in \R$, and assume
that the function $F:\T^d \to \C$ is continuous, and non-zero
$\lambda_{\T^d}$-almost everywhere. If the $d+2$ numbers $1,
\vartheta_0, \vartheta_1, \ldots, \vartheta_d$ are $\Q$-independent
then the sequence
$$
\Bigl(
n\vartheta_0 + \alpha \ln n + \beta \ln \big|
F\bigl(\langle (n \vartheta_1, \ldots , n\vartheta_d) \rangle \bigr) + z_n
\big|
\Bigr)
$$
is u.d. {\rm mod} $1$ for every $\alpha, \beta\in \R$ and every
sequence $(z_n)$ in $\C$ with $\lim_{n\to \infty} z_n = 0$.
\end{prop}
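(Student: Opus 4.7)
The plan is to verify Weyl's criterion: for every fixed $k \in \Z \setminus \{0\}$ it suffices to show that
$$
S_N := \frac{1}{N} \sum_{n=1}^N e^{2\pi \imath k \bigl(n\vartheta_0 + \alpha \ln n + \beta \ln |F(\langle n\vec\vartheta \rangle) + z_n|\bigr)} \to 0, \qquad N \to \infty,
$$
where $\vec\vartheta := (\vartheta_1, \ldots, \vartheta_d)$. I would carry this out in three reductions. \emph{First}, dispose of the perturbation $z_n$. Given $\eta > 0$, choose $\epsilon > 0$ so that $\lambda_{\T^d}(\{|F| \le \epsilon\}) < \eta$, which is possible because $F \ne 0$ $\lambda_{\T^d}$-a.e. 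On indices $n$ with $|F(\langle n\vec\vartheta\rangle)| > \epsilon$ and $|z_n| < \epsilon/2$ the identity $\ln|F + z_n| - \ln|F| = \ln|1 + z_n/F|$ is uniformly $o(1)$ as $n \to \infty$, so the perturbed and unperturbed exponentials agree up to $o(1)$. On the remaining indices both exponentials have modulus one, and by equidistribution of $(\langle n\vec\vartheta\rangle)$ on $\T^d$ — Weyl's theorem applied to the $\Q$-independent $1, \vartheta_1, \ldots, \vartheta_d$ — their asymptotic density is at most $\eta$. Thus it suffices to treat the case $z_n \equiv 0$, modulo an arbitrarily small error.

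\emph{Second}, peel off the slowly varying factor $b_n := n^{2\pi \imath k \alpha} = e^{2\pi \imath k \alpha \ln n}$ by Abel summation. Writing $a_n := e^{2\pi \imath k(n\vartheta_0 + \beta \ln|F(\langle n\vec\vartheta\rangle)|)}$ and $A_N := \sum_{n=1}^N a_n$, the estimate $|b_{n+1} - b_n| = O(1/n)$ together with summation by parts gives $\bigl|\sum_{n=1}^N a_n b_n\bigr| \le |A_N| + C \sum_{n=1}^{N-1} |A_n|/n$, so once $A_N = o(N)$ is known a Cesaro argument shows $S_N \to 0$. \emph{Third}, verify $A_N = o(N)$ by equidistribution on $\T^{d+1}$. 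The $\Q$-independence of $1, \vartheta_0, \vartheta_1, \ldots, \vartheta_d$ makes $(\langle n(\vartheta_0, \vec\vartheta)\rangle)$ equidistributed on $\T^{d+1}$, and the function
$$
\tilde G(\langle (x_0, x_1, \ldots, x_d)\rangle) := e^{2\pi \imath k x_0} \, e^{2\pi \imath k \beta \ln|F(\langle (x_1, \ldots, x_d)\rangle)|}
$$
is bounded by $1$ and continuous off the null set $\{F = 0\} \times \T$, hence Riemann integrable. By Fubini, $\int_{\T^{d+1}} \tilde G \, {\rm d}\lambda_{\T^{d+1}} = 0$ because $\int_\T e^{2\pi \imath k x_0}\, {\rm d}x_0 = 0$ for $k \ne 0$, so $A_N/N \to 0$.

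The main obstacle is the first reduction. The function $\ln|F|$ is unbounded from below near the zeros of $F$, so $z_n \to 0$ does not force $\ln|F + z_n| - \ln|F| \to 0$ uniformly on $\T^d$; what saves the argument is that we need only control modulus-one exponentials, whose contribution on indices $n$ with $\langle n\vec\vartheta\rangle$ near $\{F = 0\}$ can be bounded by the $\lambda_{\T^d}$-measure of a shrinking neighbourhood rather than by any uniform bound on $\ln|F|$ itself. A secondary technical point is that Weyl's theorem is applied to $\tilde G$, which is Riemann integrable rather than continuous; this is legitimate because the discontinuity set of $\tilde G$ has $\lambda_{\T^{d+1}}$-measure zero.
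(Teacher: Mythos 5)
The paper does not prove Proposition \ref{propudisc} but cites it verbatim from \cite[Lem.2.6]{BE-JDDE}, so there is no in-text proof to compare against. Your argument is correct, and its three reductions are precisely the discrete analogues of the machinery the present paper uses elsewhere: peeling off the vanishing perturbation $z_n$ mirrors Lemma \ref{lem200}(ii), the Abel-summation removal of $\alpha\ln n$ mirrors Lemma \ref{lem200}(iv), and the final step is Weyl's criterion applied to a Riemann-integrable test function on $\T^{d+1}$, which the paper explicitly allows in its review of uniform distribution. Two small points are worth tightening up if you write this out in full. First, the bound on the bad indices uses the closedness of $\{|F|\le\epsilon\}$: from equidistribution one gets $\limsup_N N^{-1}\#\{n\le N: \langle n\vec\vartheta\rangle\in\{|F|\le\epsilon\}\}\le\lambda_{\T^d}(\{|F|\le\epsilon\})$, which is the upper-semicontinuity half of portmanteau and should be invoked explicitly (the set need not be Jordan measurable, so equality cannot be assumed, but the inequality is all that is needed). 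Second, it is worth noting that on the good indices the expression $F(\langle n\vec\vartheta\rangle)+z_n$ is bounded away from zero by $\epsilon/2$, so the convention $\ln 0=0$ never interferes with the estimate $|\ln|1+z_n/F||\le C|z_n|/\epsilon$. With these clarifications the proof is complete and stands on its own, independently of \cite{BE-JDDE}.
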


The following definitions of nonresonance and exponential nonresonance have been
introduced in \cite{BE-JDDE} and \cite{BHPUP}, respectively. As will
become clear in the next section, they owe their specific form to
Propositions \ref{prop_dia}, \ref{propnonuni}, and \ref{propudisc}.

\begin{defn}\label{def31}
Let $b\in \N\setminus \{1\}$. A non-empty set $\cZ \subset \C$ with
$|z|=r$ for some $r>0$ and all $z\in \cZ$, i.e.\ $\cZ\subset r\mS$, is
$b${\em -nonresonant\/} if the associated set 
$$
\Delta_{\cZ}:= \left\{
1 + \frac{\arg z - \arg w}{2\pi} : z,w \in \cZ
\right\} \subset \R 
$$
has the following two properties:
\begin{enumerate}
\item $\Delta_{\cZ} \cap \Q = \left\{ 1 \right\}$;
\item $\log_b r \not \in \mbox{\rm span}_{\Q} \Delta_{\cZ}$.
\end{enumerate}
An arbitrary set $\cZ\subset \C$ is $b$-nonresonant if, for
every $r>0$, the set $\cZ\cap r\mS$ is either $b$-nonresonant or empty;
otherwise, $\cZ$ is $b${\em -resonant}.
\end{defn}

\begin{defn}\label{def2m2}
Let $b\in \N\setminus \{1\}$. A set $\cZ \subset \C$ is {\em
  exponentially\/} $b${\em -nonresonant\/} if the set $e^{t\cZ}:=
\{e^{tz}: z\in \cZ\}$ is $b$-nonresonant for some $t\in \R^+$;
otherwise, $\cZ$ is {\em exponentially\/} $b${\em -resonant}.
\end{defn}

\begin{example}\label{ex2m1}
The empty set $\varnothing$ is $b$-nonresonant and exponentially
$b$-resonant for every $b$. The singleton $\{z\}$ with $z\in \C$ is $b$-nonresonant if and only if either $z=0$
or $\log_b |z| \not \in \Q$, and it is exponentially $b$-nonresonant
precisely if $\Re z \ne 0$. Similarly, any set $\{z, \overline{z}\}$
with $z\in \C \setminus \R$ is $b$-nonresonant if and only if $1$,
$\log_b |z|$ and $\frac1{2 \pi}\arg z$ are $\Q$-independent, and it is
exponentially $b$-nonresonant precisely if $\Re z \pi / ( \Im z \ln b)
\not \in \Q$.
\end{example}

Note that if $\cZ$ is (exponentially) $b$-nonresonant then so are the
sets $-\cZ := (-1)\cZ$ and $\overline{\cZ}:= \{\overline{z} : z\in
\cZ\}$, as well as every
$\cW \subset \cZ$. Also, for each $n\in \N$ the set
$\cZ^n:=\{z^n: z\in \cZ\}$ is $b$-nonresonant whenever $\cZ$ is. The converse
fails since, for instance, $\cZ = \{-e,e\}$ is
$b$-resonant whereas $\cZ^2 = \{e^2\}$ is $b$-nonresonant.
Similarly, if
$\cZ$ is exponentially $b$-nonresonant then so is $t\cZ$ for all $t\in
\R\setminus \{0\}$. On the other hand, a set $\cZ$ is certainly $b$-resonant if
$\cZ \cap \mS\ne \varnothing$, and it is exponentially $b$-resonant
whenever $\cZ \cap \imath \R \ne \varnothing$. 

The following simple observation establishes an alternative
description of exponential $b$-nonresonance. Recall that a set is {\em
countable\/} if it is either finite (possibly empty) or countably
infinite.

\begin{lem}\label{prop7char}
Let $b\in \N \setminus \{1\}$. Assume that the set $\cZ\subset \C$ is countable and
symmetric w.r.t.\ the real axis, i.e., $\overline{\cZ} =\cZ$. Then the following are equivalent:
\begin{enumerate}
\item $\cZ$ is exponentially $b$-nonresonant;
\item For every $z\in \cZ$,
\begin{equation}\label{eq830a}
\Re z \not \in \mbox{\rm span}_{\Q} \left\{  \frac{\ln b }{\pi} \Im w  : w \in \cZ , \Re w = \Re z \right\} \, .
\end{equation}
\end{enumerate}
Moreover, if {\rm (i)} and {\rm (ii)} hold then the set $\{t\in \R^+ :
e^{t\cZ} \enspace \mbox{\rm is $b$-resonant}\}$ is countable.
\end{lem}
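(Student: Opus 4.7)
The plan is to exploit $|e^{tz}| = e^{t\Re z}$ and reduce everything to a circle-by-circle analysis. For each $\rho \in \R$ set $\cZ_\rho := \{z \in \cZ : \Re z = \rho\}$ and $V_\rho := \mbox{\rm span}_{\Q}(\Im \cZ_\rho) \subset \R$; since $t > 0$, the non-empty intersections $e^{t\cZ} \cap r\mS$ are precisely the sets $e^{t\cZ_\rho}$ with $\cZ_\rho \ne \varnothing$, corresponding to $r = e^{t\rho}$. Thus $e^{t\cZ}$ is $b$-nonresonant iff each non-empty $e^{t\cZ_\rho}$ is $b$-nonresonant on $e^{t\rho}\mS$.

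The key preliminary step is the identity
$$
\mbox{\rm span}_{\Q} \Delta_{e^{t\cZ_\rho}} = \Q + \frac{t}{2\pi} V_\rho \quad \text{whenever } \cZ_\rho \ne \varnothing \text{ and } t > 0 \, .
$$
This follows because $\arg e^{tz} \equiv t\Im z \pmod{2\pi}$, so $1 + \frac{\arg e^{tz} - \arg e^{tw}}{2\pi}$ differs by an integer from $1 + \frac{t(\Im z - \Im w)}{2\pi}$; moreover the symmetry $\overline{\cZ} = \cZ$ forces $2\Im z = \Im z - \Im \overline{z} \in \Im \cZ_\rho - \Im \cZ_\rho$, so $\mbox{\rm span}_{\Q}(\Im \cZ_\rho - \Im \cZ_\rho) = V_\rho$. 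With the identity in hand, the two clauses of $b$-nonresonance for $e^{t\cZ_\rho}$ translate into (a) for all $z, w \in \cZ_\rho$ the number $\frac{t(\Im z - \Im w)}{2\pi}$ is either integer or irrational, and (b) $\frac{t\rho}{\ln b} \notin \Q + \frac{t}{2\pi} V_\rho$.

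For (i) $\Rightarrow$ (ii) I argue the contrapositive. Suppose (ii) fails at some $z \in \cZ$, $\rho := \Re z$. If $\rho = 0$ then $e^{tz} \in \mS$ for every $t > 0$, so $e^{t\cZ} \cap \mS \ne \varnothing$ and $e^{t\cZ}$ is $b$-resonant for every $t > 0$. Otherwise $\rho = \frac{\ln b}{\pi} v_0$ for some $v_0 \in V_\rho$, and then
$$
\log_b e^{t\rho} = \frac{t\rho}{\ln b} = \frac{t(2v_0)}{2\pi} \in \frac{t}{2\pi} V_\rho \subset \mbox{\rm span}_{\Q} \Delta_{e^{t\cZ_\rho}}
$$
violates (b) for every $t > 0$; so $\cZ$ is exponentially $b$-resonant in either case.

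For (ii) $\Rightarrow$ (i) together with the final assertion it suffices to show that the set $E := \{t \in \R^+ : e^{t\cZ} \text{ is } b\text{-resonant}\}$ is countable. Failure of (a) for a fixed pair $z, w \in \cZ_\rho$ with $\Im z \ne \Im w$ pins $t$ to the countable set $\frac{2\pi(\Q \setminus \Z)}{\Im z - \Im w}$, and there are only countably many such pairs. Failure of (b) reads $t\bigl(\frac{\rho}{\ln b} - \frac{v}{2\pi}\bigr) = q$ for some $v \in V_\rho$, $q \in \Q$; hypothesis (ii) is precisely the statement that the coefficient $\frac{\rho}{\ln b} - \frac{v}{2\pi}$ never vanishes, so each triple $(\rho, v, q)$ excludes at most one value of $t$, and $\Q$, $V_\rho$ and $\{\rho : \cZ_\rho \ne \varnothing\}$ are all countable. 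Therefore $E$ is countable, and in particular $\R^+ \setminus E \ne \varnothing$, giving (i). The main technical obstacle is the initial span identity: one has to translate carefully between $\arg$-differences mod $1$ and genuine imaginary parts, and then invoke $\overline{\cZ} = \cZ$ to collapse $\mbox{\rm span}_{\Q}(\Im \cZ_\rho - \Im \cZ_\rho)$ onto $V_\rho$. After that, every remaining step is a countable-union count.
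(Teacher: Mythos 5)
Your proof is correct, and it follows the same circle-by-circle decomposition that the paper uses, but with a cleaner overall architecture. Two features distinguish it. First, you establish the full identity $\mbox{\rm span}_{\Q} \Delta_{e^{t\cZ_\rho}} = \Q + \tfrac{t}{2\pi} V_\rho$ up front (correctly using $\overline{\cZ}=\cZ$ to upgrade $\mbox{\rm span}_{\Q}(\Im\cZ_\rho - \Im\cZ_\rho)$ to all of $V_\rho$), whereas the paper only proves the $\supset$ inclusion needed for (i)$\Rightarrow$(ii) and then re-derives the needed facts ad hoc in the converse direction. Second, and more substantively, for (ii)$\Rightarrow$(i) you directly show that the bad set $E=\{t\in\R^+ : e^{t\cZ}\ \text{is $b$-resonant}\}$ is countable — which is precisely the ``moreover'' clause — by noting that failure of clause (a) pins $t$ to a countable set parametrized by pairs $(z,w)$, and failure of clause (b) is a linear equation $t\bigl(\tfrac{\rho}{\ln b}-\tfrac{v}{2\pi}\bigr)=q$ whose coefficient is nonzero by hypothesis (ii), so each triple $(\rho,v,q)$ rules out at most one $t$. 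The paper instead proves (ii)$\Rightarrow$(i) by contradiction via a pigeonhole on the ``data'' $(L,w_\ell,p_\ell,q)$ — an argument that needs a little care about why two matching parameter values force the coefficient to vanish — and then separately re-runs a countability argument for the ``moreover'' part. Your route delivers both conclusions in one pass and avoids the pigeonhole step entirely; the paper's route, on the other hand, keeps the two directions logically independent, which some readers may find easier to audit. One small thing worth making explicit in a final write-up: since $\R^+$ includes $0$ (per the paper's conventions) and $e^{0\cZ}=\{1\}$ is always $b$-resonant when $\cZ\neq\varnothing$, you should note that $0\in E$ trivially, so your countability argument need only address $t>0$, as you do.
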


\begin{proof}
To show (i)$\Rightarrow$(ii), suppose there exist different elements
$w_1, \ldots, w_L$ of $\cZ$ with $\Re w_1 = \ldots = \Re w_L$, as well
as $p_1, \ldots , p_L\in\Z$ and $q\in \N$ such that
$$
\Re w_1 = \sum\nolimits_{\ell = 1}^L \frac{p_{\ell}}{q} \frac{\ln
  b}{\pi }\Im w_{\ell} \, .
$$
Pick any $t>0$, let $r:= e^{t\Re w_1}$, and note that
$$
\log_b r = \frac{t \Re w_1}{\ln b}  =  \sum\nolimits_{\ell = 1}^L
\frac{p_{\ell}}{q} \frac{t \Im
  w_{\ell}}{\pi} \, . 
$$
On the other hand, since $\cZ$ is symmetric w.r.t.\ the real axis, and
since $\arg e^{t w_{\ell}}$ differs from $t\Im w_{\ell}$ by an integer
multiple of $2\pi$,
$$
\mbox{\rm span}_{\Q} \Delta_{e^{t\cZ} \cap r \mS } \supset \mbox{\rm
    span}_{\Q} \left( \{1\} \cup \left\{\frac{t\Im w_{\ell}}{\pi} :
      \ell = 1, \ldots , L \right\} \right) \, .
$$
Thus $\log_b r \in \mbox{\rm span}_{\Q} \Delta_{e^{t\cZ} \cap r \mS}$, showing that $e^{t\cZ}$ is $b$-resonant for all $t>0$. Since
  clearly $e^{0\cZ}=\{1\}$ is $b$-resonant as well, $\cZ$ is
  exponentially $b$-resonant, contradicting (i). Hence
  (i)$\Rightarrow$(ii); note that the countability of
  $\cZ$ has not been used here.

To establish the reverse implication (ii)$\Rightarrow$(i), suppose
the set $\cZ$ is exponentially $b$-resonant. In this case, for every $t>0$
there exists $r=r(t)>0$ such that $e^{t\cZ}\cap r \mS$ is
$b$-resonant, and so either $\Delta_{e^{t\cZ} \cap r\mS}\cap
\Q \ne \{1\}$ or $\log_b r \in \mbox{\rm span}_{\Q} \Delta_{e^{t\cZ} \cap
  r\mS}$, or both. In the first case, there exist elements
$w_1=w_1(t)$ and $w_2= w_2(t) $ of $\cZ$ with $\Re w_1 = \Re w_2$ but $w_1 \ne w_2$ such that $t(\Im w_1 -
\Im w_2)\in \pi \Q \setminus \{0\}$. In particular, therefore,
\begin{equation}\label{eqCC}
t \in \bigcup\nolimits_{z\in \cZ} \bigcup\nolimits_{w\in \cZ \setminus
\{z\}: \Re w = \Re z} \frac{\pi}{\Im w - \Im z} \Q \: = : \:  \Omega_1 \, .
\end{equation}
In the second case, for some positive integer $L=L(t)$ and some $w_1(t), \ldots,
w_{L}(t)\in \cZ$ with $\Re w_1 = \ldots = \Re w_L = t^{-1} \ln r$,
$$
\log_b r = \frac{t  \Re w_1}{\ln b} \in \mbox{\rm span}_{\Q}
\Delta_{e^{t\cZ} \cap r\mS} \subset  \mbox{\rm span}_{\Q} \left( \{1\}
  \cup \left\{ \frac{t\Im w_{\ell}}{\pi} : \ell = 1, \ldots ,
    L\right\}\right) \, .
$$
With the appropriate $p_0(t), p_1(t), \ldots , p_L(t)\in \Z$ and
$q(t)\in \N$, therefore,
\begin{equation}\label{eqCOUNT}
t   \left( q \Re w_1 - \sum\nolimits_{\ell = 1}^L p_{\ell} \frac{ \ln b}{\pi}
\Im w_{\ell}   \right) = p_0\ln b \, .
\end{equation}
Since $\cZ$ is countable, the set $\Omega_1$ in (\ref{eqCC}) is countable
as well. Consequently, if $\cZ$ is exponentially $b$-resonant then
(\ref{eqCOUNT}) must hold for all but countably many $t>0$. Hence there exist $t_2>t_1>0$ such that
$L(t_2)=L(t_1)$, $w_{\ell}(t_2) = w_{\ell}(t_1)$ and similarly $p_{\ell}(t_2) = p_{\ell}(t_1)$ for all $\ell = 1,
\ldots , L$, as well as $q(t_2) = q(t_1)$. This in turn implies
$$
\Re w_1 = \sum\nolimits_{\ell = 1}^L \frac{p_{\ell}}{q} \frac{\ln b}{\pi} \Im
  w_{\ell}  \, ,
$$
which clearly contradicts (\ref{eq830a}). For countable $\cZ$,
therefore, (ii) fails whenever (i) fails, that is,
(ii)$\Rightarrow$(i); note that the symmetry of $\cZ$ has
not been used here.

Finally, if (i) and (ii) hold, and if $e^{t\cZ}$ is $b$-resonant for some
$t>0$ then, as seen in the previous paragraph, either $t\in \Omega_1$ or else, by
(\ref{eqCOUNT}),
$$
\frac{\ln b}{t} \in \bigcup\nolimits_{z\in \cZ} \mbox{\rm span}_{\Q}
\left(
\{\Re z\} \cup \left\{ 
\frac{\ln b }{\pi} \Im w : w\in \cZ , \Re w = \Re z
\right\}
\right) \: = : \:  \Omega_2 \, .
$$
Since $\cZ$ is countable, so are $\Omega_1$ and $\Omega_2$, and hence $\{t\in
\R^+ : e^{t\cZ} \enspace \mbox{\rm is $b$-resonant}\}$ is countable as
well.
\end{proof}

\begin{rem}
The symmetry and countability assumptions are essential
in Lemma \ref{prop7char}. If $\cZ$ is not symmetric w.r.t.\ the real axis
then the implication (i)$\Rightarrow$(ii) may fail, as is seen e.g.\ for
$\cZ =\{1+ \imath \pi /\ln 10 \}$ which is exponentially
$10$-nonresonant by Example \ref{ex2m1}, yet
does not satisfy (ii) for $b=10$. Conversely, if $\cZ$ is uncountable then
(ii)$\Rightarrow$(i) may fail. To see this, simply take $\cZ = \R \setminus
\{0\}$ which satisfies (ii) for all $b$, and yet $e^{t\cZ}$ is $b$-resonant for
every $t\in \R^+$. 
\end{rem}

Deciding whether a set $\cZ\subset \C$ is $b$-resonant may be difficult
in practice, even if $\# \cZ = 2$. For example, it is unknown whether
$\{z\in \C : z^2 +2z +3=0 \}$ is $10$-resonant; see
\cite[Ex.7.27]{BHPUP}. In many situations of practical interest, the
situation regarding {\em exponential\/} $b$-resonance is much
simpler. Recall that a number $z\in \C$ is {\em algebraic\/} (over $\Q$) if
it is the root of some non-constant polynomial with integer
coefficients.

\begin{lem}\label{lem212}
Let $b\in \N\setminus \{1\}$. Assume every element of $\cZ \subset \C$
is algebraic. Then $\cZ$ is exponentially $b$-nonresonant if and only
if $\cZ \cap \imath \R = \varnothing$.
\end{lem}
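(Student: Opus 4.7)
The forward implication is already in hand from the remark immediately preceding the lemma: if some $z \in \cZ \cap \imath \R$, then $e^{tz} \in \mS$ for every $t \in \R^+$, so $e^{t\cZ} \cap \mS$ is nonempty; since $\log_b 1 = 0 \in \mbox{\rm span}_{\Q}\{1\} \subset \mbox{\rm span}_{\Q}\Delta_{e^{t\cZ} \cap \mS}$, the set $e^{t\cZ}$ fails condition (ii) of Definition \ref{def31} for $r = 1$ and is therefore $b$-resonant for every $t$. No algebraicity hypothesis is required in this direction.

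For the reverse implication I would first replace $\cZ$ by its symmetrization $\cZ' := \cZ \cup \overline{\cZ}$. Conjugates of algebraic numbers are algebraic, so $\cZ'$ is again algebraic and therefore \emph{countable}, it is \emph{symmetric} with respect to the real axis by construction, and it is still disjoint from $\imath\R$ because $\imath\R$ is closed under conjugation. Since every subset of an exponentially $b$-nonresonant set inherits the property, it suffices to prove exponential $b$-nonresonance of $\cZ'$. For this, Lemma \ref{prop7char} applies and reduces the task to the arithmetic statement
\[
\Re z \notin \mbox{\rm span}_{\Q} \left\{ \frac{\ln b}{\pi} \Im w : w \in \cZ' , \; \Re w = \Re z \right\}  \quad \mbox{for every } z \in \cZ' .
\]

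Suppose this fails at some $z \in \cZ'$. Clearing denominators in the putative rational combination yields
\[
a\pi = c \ln b , \quad \mbox{where } a := q\Re z \mbox{ and } c := \sum\nolimits_{\ell = 1}^{L} p_{\ell} \Im w_{\ell} ,
\]
for suitable integers $p_{\ell}$, $q \in \N$, and $w_{\ell} \in \cZ'$. Real and imaginary parts of algebraic numbers are algebraic, so $a$ and $c$ are both algebraic; $a \ne 0$ because $\Re z \ne 0$ (as $z \notin \imath\R$), and consequently $c \ne 0$ as well. Multiplying by $\imath$ and using the principal-branch identity $\ln(-1) = \imath \pi$ rewrites the equation as
\[
a \ln(-1) + (-\imath c) \ln b = 0 ,
\]
a vanishing $\overline{\Q}$-linear combination of $\ln(-1)$ and $\ln b$ whose coefficients are both nonzero algebraic numbers. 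However $\imath \pi$ is purely imaginary nonzero while $\ln b$ is real nonzero, so $\ln(-1)$ and $\ln b$ are $\Q$-linearly independent; Baker's theorem on linear forms in logarithms of nonzero algebraic numbers then forces them to be $\overline{\Q}$-linearly independent, which is the desired contradiction.

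The main obstacle is precisely this appeal to Baker's theorem. Knowing merely that $\pi$ and $\ln b$ are individually transcendental does \emph{not} preclude a relation $a\pi + c \ln b = 0$ with algebraic coefficients; the lemma genuinely hinges on the non-algebraicity of the ratio $\pi/\ln b$, and I do not see how to secure this without invoking Baker (or an equivalent Diophantine input). Everything else in the argument — symmetrization, reduction via Lemma \ref{prop7char}, and the manipulation converting the hypothesized resonance into a linear form in the two logarithms $\ln(-1)$ and $\ln b$ — is essentially bookkeeping.
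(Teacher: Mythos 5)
Your argument is correct and matches the paper's in essence: reduce via Lemma~\ref{prop7char} to the arithmetic condition~(\ref{eq830a}), then observe that a failure of that condition forces $\pi/\ln b$ to be algebraic, which is impossible. Two small remarks. First, the symmetrization $\cZ' = \cZ \cup \overline{\cZ}$ is harmless but unnecessary: the paper points out (both in the proof of Lemma~\ref{prop7char} and explicitly again in the proof of Lemma~\ref{lem212}) that the implication (ii)$\Rightarrow$(i) of Lemma~\ref{prop7char} never uses symmetry of $\cZ$, only countability, so you may apply it to $\cZ$ directly. Second, your worry about the Diophantine input is resolved exactly as you anticipated: the paper invokes the Gel'fond--Schneider Theorem, which yields precisely the transcendence of $\pi/\ln b$ (equivalently, the algebraic independence over $\overline{\Q}$ of $\ln(-1)$ and $\ln b$), and this is the two-logarithm special case of Baker's theorem that you used; no stronger transcendence input is needed, but your appeal to Baker is certainly sufficient and not a gap.
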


\begin{proof}
The ``only if'' part is obvious since, as seen earlier, $\cZ \cap
\imath \R \ne \varnothing$ always renders the set $\cZ$ exponentially $b$-resonant. To prove the ``if''
part, suppose that $\cZ \cap \imath \R = \varnothing$ yet $\cZ$ is
exponentially $b$-resonant. Since all of its elements are algebraic,
the set $\cZ$ is countable. By Lemma \ref{prop7char} there exist $z_1,
\ldots , z_L\in \cZ$ with $\Re z_1 = \ldots = \Re z_L$, as well as
$p_1, \ldots , p_L \in \Z$ and $q\in \N$ such that
$$
\Re z_1 = \sum\nolimits_{\ell = 1}^L \frac{p_{\ell}}{q} \frac{ \ln b}{\pi} \Im
z_{\ell} \, .
$$
(Recall that the proof of the implication (ii)$\Rightarrow$(i) in that lemma does not
require $\cZ$ to be symmetric w.r.t.\ the real axis.) Since $\Re z_1
\ne 0$, it follows that
$$
\frac{\pi}{\ln b} = \sum\nolimits_{\ell = 1}^L \frac{p_{\ell}}{q}
\frac{\Im z_{\ell}}{\Re z_{1}} \, ,
$$
which in turn implies that $\pi/\ln b$ is algebraic. However, by the
Gel'fond--Schneider Theorem \cite[Thm.1.4]{wald}, the number $\pi/\ln b$ is {\em not\/} algebraic for
any $b\in \N \setminus \{1\}$. This contradiction shows that $\cZ$
cannot be exponentially $b$-resonant if $\cZ \cap \imath \R =
\varnothing$. 
\end{proof}

\section{Characterizing BL for linear flows on $\R^d$}\label{sec3}

Let $\phi$ be a linear flow on $X=\R^d$. Recall that there exists a
unique $A_{\phi}\in \cL(X)$ such that $\phi_t = e^{tA_{\phi}}$ for all
$t\in \R$. (The linear map $A_{\phi}$ is sometimes referred to as the {\em generator\/} of
$\phi$.) In fact, with $I_X\in \cL(X)$ denoting the identity
map, $A_{\phi} = \lim_{t \to 0} t^{-1} (\phi_t - I_X) =
\frac{{\rm d}}{{\rm d}t} \phi_t |_{t=0}$, and $\phi$ is simply the
flow generated by the (autonomous) linear differential equation
$\dot x = A_{\phi} x$. Since conversely $(t,x)\mapsto e^{tA}x$ defines, for each $A\in \cL(X)$, a linear flow on $X$
with generator $A$, there is a one-to-one correspondence between
the family of all linear flows on $X$ and the space $\cL(X)$. Thus it
makes sense to define the {\em spectrum\/} of $\phi$ as
$$
\sigma(\phi) := \sigma (A_{\phi}) = \{z\in \C : z\enspace \mbox{\rm is an
  eigenvalue of $A_{\phi}$} \} \, .
$$
Note that $\sigma(\phi)\subset \C$ is non-empty, countable (in fact,
finite with $\# \sigma(\phi)\le d$) and symmetric w.r.t.\ the real
axis. 

Recall that the symbol $H$ is used throughout to denote a linear
observable (i.e., a linear functional) on $\cL(X)$. For convenience,
let $\cO(X)$ be the space of all such observables, i.e., $\cO(X)$ is
simply the dual of $\cL(X)$, endowed with the usual topology. The following is a basic linear algebra observation.

\begin{lem}\label{lem31}
Let $\phi$ be a linear flow on $X$. Given any non-empty set $\cZ
\subset \sigma (\phi)$ and any vector $u\in \R^{\cZ}$, there exists $H\in \cO (X)$ such that
\begin{equation}\label{eqLIN}
H(\phi_t) = \sum\nolimits_{z\in \cZ} e^{t\Re z} u_z \cos (t \Im z)  \quad \forall t\in \R \, .
\end{equation}
\end{lem}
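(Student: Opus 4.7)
The plan is to reduce to the case of a single eigenvalue by linearity and then construct the required functional via a suitable real eigenvector (or pair of real vectors coming from a complex eigenvector). Since $\cO(X)$ is a vector space and the right-hand side of (\ref{eqLIN}) is $\R$-linear in $u$, it suffices to produce, for each fixed $z\in \sigma(\phi)$, an element $H_z\in \cO(X)$ with
\[
H_z(\phi_t) = e^{t\Re z}\cos(t\Im z)\qquad\forall\, t\in\R,
\]
and then set $H:= \sum_{z\in \cZ} u_z H_z$.

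To build $H_z$ I would split into two cases. If $z\in \R$, pick a nonzero real eigenvector $v\in X$ of $A_\phi$ with $A_\phi v = zv$, and choose any linear functional $\ell:X\to \R$ with $\ell(v)=1$ (possible since $v\ne 0$). Define $H_z(A):= \ell(Av)$ for $A\in \cL(X)$; this is clearly linear in $A$, hence $H_z\in \cO(X)$, and $H_z(\phi_t)=\ell(e^{tA_\phi}v)=e^{tz}\ell(v)=e^{tz}$, matching the desired expression since $\Im z=0$. If $z=\alpha+\imath\beta$ with $\beta\ne 0$, pick a nonzero complex eigenvector $v+\imath w$ with $v,w\in X$ and $A_\phi(v+\imath w)=z(v+\imath w)$; splitting real and imaginary parts yields $A_\phi v=\alpha v-\beta w$ and $A_\phi w=\beta v+\alpha w$. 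One checks that $v,w$ are $\R$-linearly independent (if, say, $w=cv$ then $v$ would be a real eigenvector with the non-real eigenvalue $z$, impossible). Hence one can select a linear functional $\ell:X\to\R$ with $\ell(v)=1$ and $\ell(w)=0$, and again set $H_z(A):=\ell(Av)$. A direct computation using the series for $e^{tA_\phi}$ (or solving the $2\times 2$ constant-coefficient system on $\mbox{\rm span}_\R\{v,w\}$) gives
\[
e^{tA_\phi}v = e^{t\alpha}\bigl(\cos(t\beta)v-\sin(t\beta)w\bigr),
\]
so $H_z(\phi_t)=\ell(e^{tA_\phi}v)=e^{t\alpha}\cos(t\beta)$, as required.

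The only non-routine points are the linear independence of $v$ and $w$ in the complex case and the identity for $e^{tA_\phi}v$, both of which are standard real-Jordan-form computations. Summing the constructed $H_z$ with weights $u_z$ produces the $H\in\cO(X)$ satisfying (\ref{eqLIN}), completing the proof.
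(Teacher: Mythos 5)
Your proof is correct and follows essentially the same route as the paper: pick real eigenvectors (or the real Jordan pair $v,w$ for a non-real eigenvalue), compute $\phi_t$ on that pair, and choose a linear functional that extracts the cosine term. The only cosmetic difference is how you eliminate the $\sin(t\Im z)$ contribution---you impose $\ell(w)=0$ directly, whereas the paper instead picks a single $h_z$ with $h_z(v_z)=h_z(\widetilde{v}_z)=\tfrac12 u_z$ and sums $h_z(Av_z)+h_z(A\widetilde{v}_z)$ so the sine terms cancel; this lets the paper treat real and non-real $z$ with one uniform formula (setting $\widetilde{v}_z=v_z$ when $z\in\R$), while your version handles the two cases separately.
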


\begin{proof}
For every real
$z\in \cZ$, pick $v_{z} \in X \setminus \{0\}$ such that $A_{\phi} v_{z} =
z v_{z}$ and let $\widetilde{v}_{z}:= v_{z}$. For
every non-real $z \in \cZ$, pick $v_{z},
\widetilde{v}_{z}\in X \setminus \{0\}$ such that
$$
A_{\phi} v_{z} =  v_{z} \Re z  - \widetilde{v}_{z}
\Im z  \, , \quad
A_{\phi} \widetilde{v}_{z} = v_{z}  \Im z +
\widetilde{v}_{z}  \Re
z \,  ;
$$
note that $v_{z}, \widetilde{v}_{z}$ are linearly
independent. With this, for each $z\in \cZ$,
\begin{align}\label{eqphit}
\phi_t v_z & = e^{t\Re z} \bigl( v_{z} \cos (t\Im
z) - \widetilde{v}_{z} \sin (t\Im z) \bigr) 
\nonumber \\[-3mm]
& \qquad \qquad \qquad \qquad \qquad \qquad \qquad \qquad \quad \forall t \in \R \, .\\[-3mm]
\phi_t \widetilde{v}_z & = e^{t\Re z} \bigl( v_{z} \sin (t\Im
z) + \widetilde{v}_{z} \cos (t\Im z ) \bigr) \nonumber
\end{align}
For
each $z\in \cZ$, pick a linear functional
$h_z$ on $X$ such that $h_z (v_z)= h_z
(\widetilde{v}_z) = \frac12 u_z$. Using (\ref{eqphit}) it is easily
verified that $H\in \cO(X)$ given by
$$
H(A) = \sum\nolimits_{z\in \cZ} \bigl( 
h_z (A v_z) + h_z (A\widetilde{v}_{z})
\bigr) \quad \forall A \in \cL(X) \, ,
$$
does indeed satisfy (\ref{eqLIN}).
\end{proof}

As it turns out, the set $\sigma (\phi)$ controls the Benford property
for all signals $H(\phi_{\bullet})$. This is the first main result of
the present article.

\begin{theorem}\label{thm32}
Let $b\in \N \setminus \{1\}$. For each linear flow $\phi$ on $X$ the
following are equivalent:
\begin{enumerate}
\item The set $\sigma (\phi)$ is exponentially $b$-nonresonant;
\item For every $H\in \cO(X)$ either the function
  $H(\phi_{\bullet})$ is $b$-Benford, or $H(\phi_{\bullet})= 0$. 
\end{enumerate}
\end{theorem}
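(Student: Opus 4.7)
My plan is to translate both directions to the c.u.d.\ mod $1$ question via Proposition \ref{prop_dia}, and then let Proposition \ref{propudisc} handle (i)$\Rightarrow$(ii) and Proposition \ref{propnonuni} handle the contrapositive of (ii)$\Rightarrow$(i). The common starting point is the (real) Jordan-form expansion
$$H(\phi_t)=\sum_{z\in\sigma(\phi)}e^{t\Re z}\bigl(p_z(t)\cos(t\Im z)+q_z(t)\sin(t\Im z)\bigr)$$
with real polynomials $p_z,q_z$ whose degrees are bounded by the Jordan block sizes, conjugate eigenvalues giving matching contributions so that $H(\phi_t)$ is real. Assuming $H(\phi_\bullet)\not\equiv 0$, let $r$ be the largest $\Re z$ for which some term survives and $N$ the largest polynomial degree among such terms with $\Re z=r$. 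Factoring out $t^Ne^{rt}$ yields $t^{-N}e^{-rt}H(\phi_t)=F_0(t)+\varepsilon(t)$ with $\varepsilon(t)\to 0$ and $F_0(t)=\sum_{z\in\cZ_r^\star}c_z\cos(t\Im z+\varphi_z)$ a finite quasi-periodic sum over a dominant set $\cZ_r^\star\subset\sigma(\phi)$. A $\Q$-basis reduction (pick $\beta_1,\ldots,\beta_M$ spanning $\mathrm{span}_\Q\{\Im z:z\in\cZ_r^\star\}$ and clear a common denominator $Q$) rewrites $F_0(t)=F(\langle(tv_1,\ldots,tv_M)\rangle)$ with $v_m=\beta_m/(2\pi Q)$ and $F:\T^M\to\C$ a continuous trigonometric polynomial, which, being real-analytic, is $\lambda_{\T^M}$-a.e.\ nonzero.

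For (i)$\Rightarrow$(ii), I invoke Lemma \ref{lem230}: it suffices to show the sequence $(\log_b|H(\phi_{n\delta})|)_n$ is u.d.\ mod $1$ for almost every $\delta>0$. Expanding the logarithm,
$$\log_b|H(\phi_{n\delta})|=N\log_b\delta+(N/\ln b)\ln n+n(\delta r/\ln b)+(1/\ln b)\ln\bigl|F(\langle(n\delta v_1,\ldots,n\delta v_M)\rangle)+\varepsilon(n\delta)\bigr|,$$
which matches Proposition \ref{propudisc}'s conclusion with $\vartheta_0=\delta r/\ln b$, $\vartheta_m=\delta v_m$, $\alpha=N/\ln b$, $\beta=1/\ln b$, $z_n=\varepsilon(n\delta)\to 0$, after absorbing the constant $N\log_b\delta$ via Lemma \ref{lem200}(ii). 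The required hypothesis is $\Q$-independence of $1,\vartheta_0,\vartheta_1,\ldots,\vartheta_M$; after excluding a countable set of $\delta$, any violation must reduce to a relation $r\in\mathrm{span}_\Q\{(\ln b/\pi)\Im z:z\in\cZ_r^\star\}$, which is exactly what exponential $b$-nonresonance of $\sigma(\phi)$ prohibits for the dominant eigenvalues via Lemma \ref{prop7char}(i)$\Rightarrow$(ii). Hence $(\log_b|H(\phi_{n\delta})|)_n$ is u.d.\ mod $1$ for a.a.\ $\delta$, Lemma \ref{lem230} promotes this to c.u.d.\ mod $1$ of $\log_b|H(\phi_\bullet)|$, and Proposition \ref{prop_dia} concludes that $H(\phi_\bullet)$ is $b$-Benford.

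For the contrapositive of (ii)$\Rightarrow$(i), suppose $\sigma(\phi)$ is exponentially $b$-resonant. The easy subcase $0\in\sigma(\phi)$ is immediate: Lemma \ref{lem31} produces a nonzero constant signal $H(\phi_t)\equiv c$, whose $\log_b|c|$ is not c.u.d.\ mod $1$. Otherwise, Lemma \ref{prop7char} yields distinct eigenvalues $z_1,\ldots,z_L\in\sigma(\phi)$ with common real part $r$ together with $p_\ell\in\Z$ and $q\in\N$ such that $qr=(\ln b/\pi)\sum_\ell p_\ell\Im z_\ell$; a further $\Q$-basis reduction lets me take $\Im z_1,\ldots,\Im z_L$ themselves $\Q$-independent (and nonzero, using either a purely imaginary eigenvalue or one of real part $r\neq 0$). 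For each $u\in\R^L$, Lemma \ref{lem31} provides $H_u\in\cO(X)$ with $H_u(\phi_t)=e^{rt}\sum_\ell u_\ell\cos(t\Im z_\ell)$, and substituting the resonance relation into $q\log_b|H_u(\phi_t)|=qrt/\ln b+q\log_b|\sum_\ell u_\ell\cos(t\Im z_\ell)|$ gives
$$\langle q\log_b|H_u(\phi_t)|\rangle=P_u\bigl(\langle(t\Im z_1/(2\pi),\ldots,t\Im z_L/(2\pi))\rangle\bigr)$$
where $P_u$ is exactly the map of Proposition \ref{propnonuni} with integer coefficients $2p_\ell$ and $\alpha=q/\ln b$. $\Q$-independence of the $\Im z_\ell$ makes the continuous orbit $t\mapsto\langle(t\Im z_1/(2\pi),\ldots,t\Im z_L/(2\pi))\rangle$ equidistribute on $\T^L$, so the asymptotic distribution of $\langle q\log_b|H_u(\phi_\bullet)|\rangle$ on $\T$ equals $\lambda_{\T^L}\circ P_u^{-1}$. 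Proposition \ref{propnonuni} (with $\nu=\lambda_\T$) supplies $u\in\R^L\setminus\{0\}$ making this push-forward different from $\lambda_\T$, so $q\log_b|H_u(\phi_\bullet)|$, and hence $\log_b|H_u(\phi_\bullet)|$ by Lemma \ref{lem200}(iii), fails c.u.d.\ mod $1$; Proposition \ref{prop_dia} then shows $H_u(\phi_\bullet)$ is not $b$-Benford, while it is plainly not identically zero. The main technical obstacle throughout is the careful $\Q$-basis bookkeeping needed to fit both Propositions \ref{propudisc} and \ref{propnonuni} precisely, together with managing Jordan-block polynomial factors and the $\varepsilon(t)\to 0$ perturbation in the forward direction.
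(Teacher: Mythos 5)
Your proof is correct, and the overall architecture for (ii)$\Rightarrow$(i) matches the paper's closely: handle the case $0\in\sigma(\phi)$ by a constant signal, otherwise extract an eigenvalue relation from Lemma \ref{prop7char}, build the special observable $H_u$ via Lemma \ref{lem31}, and use Proposition \ref{propnonuni} to make the push-forward under $P_u$ non-uniform. The only cosmetic difference there is that you appeal directly to continuous-time equidistribution of $t\mapsto\langle(t\Im z_1/(2\pi),\ldots)\rangle$ plus Riemann integrability of $e^{2\pi\imath kP_u}$, whereas the paper discretizes (choosing a.a.\ $\delta$ for which $1,\frac{1}{2\pi}\delta\Im z_1,\ldots$ are $\Q$-independent) and then promotes discrete to continuous via Lemma \ref{lem230}; both routes are legitimate.

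Where you genuinely diverge from the paper is in (i)$\Rightarrow$(ii). The paper does not touch the Jordan-form/dominant-term expansion at all for this theorem; instead it observes that $\sigma(\phi_\delta)=e^{\delta\sigma(\phi)}$ is $b$-nonresonant for all but countably many $\delta>0$ (by Lemma \ref{prop7char}), applies the discrete-time result Proposition \ref{prop33} to each good time-$\delta$ map $\phi_\delta$, uses analyticity of $t\mapsto H(\phi_t)$ together with the infimum $\delta_0=\inf\{\delta>0:H(\phi_{n\delta})\equiv0\}$ to separate the case $H(\phi_\bullet)=0$, and then closes with Lemma \ref{lem230}. This is considerably shorter, since the entire burden of the Jordan block/$\Q$-basis bookkeeping is outsourced to the already-proved Proposition \ref{prop33}. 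Your route instead rebuilds that content from scratch: you extract the dominant growth $t^Ne^{rt}$, write the quasi-periodic leading term as $F(\langle(tv_1,\ldots,tv_M)\rangle)$ with a $\Q$-basis $\beta_1,\ldots,\beta_M$ of $\mathrm{span}_\Q\{\Im z:z\in\cZ_r^\star\}$, and apply Proposition \ref{propudisc} directly. In effect you are running (a continuous-time version of) the argument the paper reserves for Theorem \ref{thm310}. That works, and it has the virtue of making the proof self-contained modulo Proposition \ref{propudisc}; the cost is more bookkeeping and the need to argue separately (as you do, via the linear independence of $e^{t\Re z}t^j\cos(t\Im z)$ etc.\ and real-analyticity of the trigonometric sum $F$) that some term actually survives with $F\ne 0$ a.e.\ when $H(\phi_\bullet)\ne 0$. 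One step worth making explicit, which you gesture at with \emph{``after excluding a countable set of $\delta$''}: a $\Q$-linear relation among $1,\delta r/\ln b,\delta v_1,\ldots,\delta v_M$ with a fixed integer coefficient vector either pins down a single $\delta$ (hence only countably many bad $\delta$ arise this way) or forces $a_0=0$ and $a_1 r/\ln b+\sum_m a_{m+1}v_m=0$; if $a_1=0$ this contradicts $\Q$-independence of the $\beta_m$, and if $a_1\ne 0$ it yields exactly $r\in\mathrm{span}_\Q\{\frac{\ln b}{\pi}\Im z: z\in\cZ_r^\star\}$, which is ruled out because $\cZ_r^\star\subset\{z\in\sigma(\phi):\Re z=r\}$ and Lemma \ref{prop7char}(ii) excludes the larger span. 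With that spelled out, the argument is complete.
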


\noindent
The proof of Theorem \ref{thm32} makes use of a
discrete-time analogue established in \cite{BE-JDDE}.

\begin{prop}{\rm $\!\!$\cite[Thm.3.4]{BE-JDDE}}\label{prop33}
Let $b\in \N \setminus \{1\}$. For each invertible $A\in \cL(X)$ the
following are equivalent:
\begin{enumerate}
\item The set $\sigma (A)$ is $b$-nonresonant;
\item For every $H\in \cO(X)$ either the sequence
  $\bigl(\log_b |H(A^n)|\bigr)$ is u.d.\ {\rm mod} $1$, or $H(A^n)\equiv 0$.
\end{enumerate}
\end{prop}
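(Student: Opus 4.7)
My plan combines the Jordan decomposition of $A$ with the number-theoretic engines provided by Propositions~\ref{propudisc} and \ref{propnonuni}. As setup, observe that for any $H\in\cO(X)$ one has the explicit formula
\[
H(A^n) = \sum_{k=1}^{M} r_k^n\, g_k(n),\qquad g_k(n) = \sum_{\lambda\in\sigma(A)\cap r_k\mS} e^{\imath n\arg\lambda}\,q_\lambda(n),
\]
where $r_1>r_2>\ldots>r_M$ are the distinct moduli of eigenvalues of $A$ and each $q_\lambda$ is a complex polynomial in $n$ of degree less than the largest Jordan block at $\lambda$ (with $q_{\overline{\lambda}}=\overline{q_\lambda}$, so every $g_k$ is real-valued).

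For (i)$\Rightarrow$(ii), assume $\sigma(A)$ is $b$-nonresonant and $H(A^n)\not\equiv 0$. Let $k_*$ be the smallest index with $g_{k_*}\not\equiv 0$; then $H(A^n)=r_{k_*}^n\bigl(g_{k_*}(n)+\epsilon_n\bigr)$ with $\epsilon_n\to 0$ geometrically, since $r_{k_*+1}/r_{k_*}<1$. Extracting the dominant polynomial degree $m$ from $g_{k_*}$ gives
\[
\log_b|H(A^n)| = n\log_b r_{k_*} + \tfrac{m}{\ln b}\ln n + \tfrac{1}{\ln b}\ln\bigl|F\bigl(\langle(n\vartheta_1,\ldots,n\vartheta_K)\rangle\bigr)+z_n\bigr|,
\]
where $\vartheta_j=\arg\lambda_j/(2\pi)$ runs over representatives of the conjugate pairs in $\sigma(A)\cap r_{k_*}\mS$, $F:\T^K\to\C$ is the continuous, $\lambda_{\T^K}$-a.e.\ non-zero function assembled from the leading coefficients of the $q_\lambda$, and $z_n\to 0$. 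The $b$-nonresonance of $\sigma(A)\cap r_{k_*}\mS$ (Definition~\ref{def31}) translates precisely into the $\Q$-independence of $1,\log_b r_{k_*},\vartheta_1,\ldots,\vartheta_K$, so Proposition~\ref{propudisc} yields u.d.\ mod~$1$ of $\bigl(\log_b|H(A^n)|\bigr)_n$.

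For (ii)$\Rightarrow$(i), argue by contraposition. If $\sigma(A)$ is $b$-resonant, some $r>0$ with $\cZ:=\sigma(A)\cap r\mS\ne\varnothing$ violates Definition~\ref{def31}. Selecting real-form eigenvectors for $\cZ$ and pairing them with a dual functional (a discrete analogue of the construction in Lemma~\ref{lem31}), I build, for any $u\in\R^K$, an $H\in\cO(X)$ with $H(A^n)=r^n\sum_\ell u_\ell\cos(2\pi n\vartheta_\ell)$ where $\vartheta_\ell=\arg\lambda_\ell/(2\pi)$. If condition~(i) of Definition~\ref{def31} fails, some $\vartheta_\ell-\vartheta_{\ell'}\in\Q$, so $|H(A^n)|/r^n$ is eventually periodic in $n$ and $\bigl(\log_b|H(A^n)|\bigr)_n$ is not u.d.\ mod~$1$. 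If instead condition~(ii) fails, then $q\log_b r=\sum_\ell p_\ell\vartheta_\ell$ for integers $p_\ell,q$ with $q\geq 1$; substituting yields $\langle q\log_b|H(A^n)|\rangle=P_u\bigl(\langle(n\vartheta_1,\ldots,n\vartheta_K)\rangle\bigr)$ with $P_u$ as in Section~\ref{sec2}. After reducing to a $\Q$-independent subset of $\{\vartheta_\ell\}$ so that the orbit $\{(n\vartheta_\ell)_\ell\bmod 1\}$ is equidistributed in $\T^K$, Proposition~\ref{propnonuni} supplies a non-zero $u$ with push-forward $\lambda_{\T^K}\circ P_u^{-1}\ne\lambda_\T$; by Weyl's criterion and Lemma~\ref{lem200}(iii), $(\log_b|H(A^n)|)_n$ fails to be u.d.\ mod~$1$.

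The main obstacle is the Jordan-block analysis in (i)$\Rightarrow$(ii): pinpointing the correct polynomial degree $m$, assembling the continuous map $F$ on $\T^K$, and verifying that the remainder $z_n$ genuinely tends to $0$ requires ruling out systematic near-cancellation in the almost-periodic leading coefficient of $g_{k_*}$. Once this and the dictionary between Definition~\ref{def31} and the $\Q$-independence hypothesis of Proposition~\ref{propudisc} are in place, both implications reduce mechanically to the cited number-theoretic propositions.
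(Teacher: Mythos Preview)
First, note that the paper does not prove Proposition~\ref{prop33}: it is quoted from \cite{BE-JDDE} and used as a black box in the proof of Theorem~\ref{thm32}. So there is no in-paper argument to compare against; what follows is an assessment of your sketch on its own merits. Your overall architecture---Jordan decomposition plus Proposition~\ref{propudisc} for (i)$\Rightarrow$(ii), eigenvector construction plus Proposition~\ref{propnonuni} for (ii)$\Rightarrow$(i)---is the right one and parallels the paper's proofs of Theorems~\ref{thm32} and~\ref{thm310}. However, two of your steps are not correct as written.

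In (i)$\Rightarrow$(ii), the claim that $b$-nonresonance of $\sigma(A)\cap r_{k_*}\mS$ ``translates precisely into the $\Q$-independence of $1,\log_b r_{k_*},\vartheta_1,\ldots,\vartheta_K$'' is false. Condition~(i) of Definition~\ref{def31} controls only \emph{pairwise} quantities $\vartheta_j\pm\vartheta_k$, not higher-order rational relations: with $\vartheta_1=\sqrt2$, $\vartheta_2=\sqrt3$, $\vartheta_3=-\sqrt2-\sqrt3$, all pairwise sums and differences are irrational, yet $\vartheta_1+\vartheta_2+\vartheta_3=0$. The repair is precisely the reduction you already invoke in the other direction: pass to a $\Q$-basis of $\mathrm{span}_{\Q}(\{1\}\cup\{\vartheta_j\})$ containing $1$, clear a common denominator $q$ (i.e., work with $A^{qn}$, as the paper works with $\phi_{qn\delta}$ in Theorems~\ref{thm32} and~\ref{thm310}), and rewrite $F$ on the lower-dimensional torus before invoking Proposition~\ref{propudisc}. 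Condition~(ii) of Definition~\ref{def31} then supplies exactly the missing independence of $\log_b r_{k_*}$ from $\{1,\vartheta'_\ell\}$.

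In (ii)$\Rightarrow$(i), your treatment of the sub-case ``condition~(i) of Definition~\ref{def31} fails'' is also incorrect: from $\vartheta_\ell-\vartheta_{\ell'}\in\Q$ it does \emph{not} follow that $|H(A^n)|/r^n$ is eventually periodic. If $\vartheta_\ell$ is irrational and $\vartheta_{\ell'}=\vartheta_\ell-\tfrac12$, then $u_1\cos(2\pi n\vartheta_\ell)+u_2\cos(2\pi n\vartheta_{\ell'})=(u_1+(-1)^nu_2)\cos(2\pi n\vartheta_\ell)$, which is never periodic; worse, for generic $u$ and $\log_b r$ the resulting $\log_b|H(A^n)|$ \emph{is} u.d.\ mod~$1$. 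This sub-case needs a genuinely different construction---for instance, pass to $A^q$ so that the two eigenvalues coalesce and then argue via a non-diagonalizable block, or fold this case into the Proposition~\ref{propnonuni} mechanism after the $\Q$-basis reduction rather than treating it separately.
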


\begin{proof}[Proof of Theorem \ref{thm32}]
To prove (i)$\Rightarrow$(ii), let $\sigma (\phi)$ be exponentially
$b$-nonresonant and fix any $H\in \cO(X)$. Note
that $\sigma (\phi_{\delta}) = e^{\delta \sigma (\phi)}$ is
$b$-nonresonant for all but countably many $\delta > 0$, by
Lemma \ref{prop7char}. Since $\phi_{\delta}$ is invertible for all $\delta>0$,
Proposition \ref{prop33} implies that either $H(\phi_{\delta}^n)=
H(\phi_{n\delta})\equiv 0$, or else the sequence $\bigl(\log_b
|H(\phi_{n\delta})|\bigr)$ is u.d.\ mod $1$. Let 
$$
\delta_0 := \inf \{ \delta > 0 : H(\phi_{n\delta}) \equiv 0\} \ge 0 \, ,
$$
with $\inf \varnothing := +\infty$. If $\delta_0 = 0$ then there
exists a sequence $(\delta_n)$ with $\delta_n \searrow 0$ and
$H(\phi_{\delta_n})\equiv 0$. Since $t\mapsto H(\phi_t)$ is analytic,
it follows that $H(\phi_{\bullet})= 0$. If, on the other hand, $\delta_0
> 0$ then, for almost all $0<\delta< \delta_0$,
the sequence $\bigl( f(n\delta)\bigr)$ is u.d.\ mod $1$, with the
measurable function $f = \log_b |H(\phi_{\bullet})|$. Hence by Lemma
\ref{lem230}, $f$ is c.u.d.\ mod $1$, i.e.,
$H(\phi_{\bullet})$ is $b$-Benford.

To prove (ii)$\Rightarrow$(i), let $\sigma (\phi)$ be exponentially
$b$-resonant. By Lemma \ref{prop7char}, there exists $z_1\in \sigma
(\phi)$ such that
\begin{equation}\label{eqSPACE}
\Re z_1 \in \mbox{\rm span}_{\Q} \left\{
\frac{\ln b }{\pi} \Im w  : w\in \sigma(\phi), \Re w = \Re z_1
\right\} \, .
\end{equation}
Let $L$ be the dimension of the $\Q$-linear space in
(\ref{eqSPACE}). 

If $L= 0$ then $z_1 = 0$, and picking any $v\in X
\setminus \{0\}$ with $A_{\phi} v =0$ yields $\phi_t v \equiv v$. With
any linear functional $h$ on $X$ that satisfies $h(v)=1$, and with the
linear observable $H$ defined as $H(A) = h(A v)$ for all $A\in \cL(X)$, therefore,
$H(\phi_t)\equiv 1$ is neither $b$-Benford nor zero, i.e., (ii)
fails. 

For $L \ge 1$, it is possible to choose $z_1, \ldots, z_L\in \sigma(\phi)$ with $\Re z_1
= \ldots = \Re z_L$ and $0\le \Im z_1 < \ldots < \Im z_L$ such that
the $L$ numbers
$\frac1{\pi} \Im z_1 , \ldots , \frac{1}{\pi} \Im z_{L}$ are
$\Q$-in\-de\-pen\-dent. By (\ref{eqSPACE}) there exist $p_1, \ldots ,
p_{L}\in \Z$ and $q\in \N$ such that
$$
\Re z_1 = \sum\nolimits_{\ell = 1}^{L} \frac{p_{\ell}}{2q} \frac{\ln b}{\pi} \Im
  z_{\ell}  \, .
$$
Use Proposition \ref{propnonuni}, with $d=L$ and $\alpha = q/ \ln
b$, to chose $u\in \R^L \setminus \{0\}$ such that $\nu :=
\lambda_{\T^{L}}\circ P_u^{-1}\ne \lambda_{\T}$, and use Lemma
\ref{lem31} to pick $H\in \cO(X)$ with
$$
H(\phi_t) = e^{t\Re z_1} \sum\nolimits_{\ell = 1}^{L} u_{\ell} \cos
(t \Im z_{\ell})  \quad \forall t \in \R \, .
$$
Since $t\mapsto H(\phi_t)$ is analytic and non-constant, the set
$\{t\in \R^+: H(\phi_t)=0\}$ is countable. For all but
countably many $\delta>0$, therefore, $H(\phi_{n\delta})\ne 0$ for all
$n\in \N$. Consequently, for
almost all $\delta>0$ and all $n\in \N$, 
\begin{align*}
\bigl\langle \log_b |H(\phi_{n\delta})^q|\bigr\rangle 
& = \left\langle
\frac{n q \delta \Re z_1}{\ln b}  + \frac{q}{\ln b} \ln \left|
\sum\nolimits_{\ell = 1}^{L} u_{\ell} \cos (n\delta \Im z_{\ell} ) 
\right| 
\right\rangle\\[1mm]
& = \left\langle
\sum\nolimits_{\ell = 1}^{L} p_{\ell} \frac{n\delta \Im
  z_{\ell}}{2\pi} + \frac{q}{\ln b} \ln \left|
\sum\nolimits_{\ell = 1}^{L} u_{\ell} \cos \left( 2\pi \frac{n\delta
  \Im z_{\ell}}{2\pi}\right)
\right|
\right\rangle \\[1mm]
& = P_u \left(
\left\langle \left(
\frac{n\delta \Im z_1}{2\pi} , \ldots , \frac{n\delta \Im z_{L}}{2\pi}
\right) \right\rangle
\right) \, .
\end{align*}
The $L+1$ numbers $1, \frac1{2\pi}\delta \Im z_1, \ldots
, \frac1{2\pi} \delta \Im z_{L}$ are $\Q$-independent for all but
countably many $\delta>0$, and whenever they are, the sequence
$\left( \left\langle \left( \frac{1}{2\pi} n\delta \Im z_1, \ldots ,
  \frac1{2\pi } n\delta \Im z_{L} \right) \right\rangle \right)$ is uniformly
distributed on $\T^{L}$; e.g., see \cite[Exp.I.6.1]{KN}. Since the
function $e^{2\pi \imath k P_u}$ is Riemann integrable for each $k\in \Z$,
it follows that for almost all $\delta>0$,
\begin{align*}
\frac1{N}\sum\nolimits_{n=1}^N e^{2\pi \imath k \log_b
  |H(\phi_{n\delta})^q|} & \: \: \: = \: \: \:  \frac1{N}\sum\nolimits_{n=1}^N e ^{2\pi
  \imath k P_u \bigl(\bigl\langle  (  n\delta \Im z_1 /(2\pi), \ldots , n\delta
  \Im z_{L}/(2\pi)) \bigr\rangle \bigr)} \\[1mm]
& \stackrel{N\to \infty}{\longrightarrow} \int_{\T^{L}} e^{2\pi \imath k
P_u}\, {\rm d}\lambda_{\T^{L}} = \int_{\T}e^{2\pi \imath k y}\,
{\rm d}\nu (y) \quad \forall k \in \Z \, . 
\end{align*}
Recall that $\nu \ne \lambda_{\T}$, so $\int_{\T} e^{2\pi \imath
  k^* y}\, {\rm d}\nu(y) \ne 0$ for some integer $k^* \ne 0$, and
Lemma \ref{lem230} shows that
$$
\lim\nolimits_{T\to +\infty} \frac1{T} \int_0^T e^{2\pi \imath k^*
  \log_b|H(\phi_t)^q|} \, {\rm d}t = \int_{\T} e^{2\pi \imath
  k^* y}\, {\rm d}\nu(y) \ne 0 \, .
$$
Thus $\log_b |H(\phi_{\bullet})^q|= q \log_b |H(\phi_{\bullet})|$ is not c.u.d.\ {\rm
  mod} $1$, and neither is $\log_b|H(\phi_{\bullet})|$, by Lemma
\ref{lem200}. In other words, $H(\phi_{\bullet})$ is not $b$-Benford
(and clearly $H(\phi_{\bullet})\ne 0$). Overall, (ii) fails whenever
(i) fails, that is, (ii)$\Rightarrow$(i).
\end{proof}

\begin{example}\label{exa34}
By utilizing Example \ref{ex2m1}, the examples mentioned in the Introduction are easily reviewed in the
light of Theorem \ref{thm32}.

(i) For the linear flow $\phi$ on $\R^1$ generated by the scalar
equation $\dot x = \alpha x$, the set $\sigma(\phi)=\{\alpha\}$ is
exponentially $b$-nonresonant if and only if $\alpha \ne 0$.

(ii) For the linear flow $\phi$ on $\R^2$ generated by
$$
\dot x = \left[
\begin{array}{cr} \alpha & - \beta \\
\beta & \alpha
\end{array}
\right] x \, ,
$$
with $\alpha, \beta \in \R$ and $\beta > 0$, the set $\sigma (\phi) =
\{\alpha \pm \imath \beta\}$ is exponentially $b$-nonresonant if and only
if $\alpha \pi / (\beta \ln b )\not \in \Q$, and whenever it is, $H(\phi_{\bullet})$ is $b$-Benford for every 
$H\in \cO(\R^2)$ unless $H(I_{\R^2}) = H\left(
\left[
\begin{array}{cr} 0 & -1 \\ 1 & 0 
\end{array}
\right]
\right)=0$; in the latter case, $H(\phi_{\bullet})=0$.
\end{example}

\begin{example}
Let $p\ne 0$ be any real polynomial, and $\alpha \in \R$. The function
$f(t)=p(t)e^{\alpha t}$ is $b$-Benford if and only if $\alpha\ne
0$. To see this, simply note that $f$ solves a linear differential
equation with constant coefficients and order equal to the
degree of $p$ plus one. Thus, $f=H(\phi)$ for the appropriate linear
observable $H$ and linear flow $\phi$ with $\sigma(\phi) =\{\alpha\}$,
and the claim follows from Theorem \ref{thm32}. 
\end{example}

\begin{example}\label{exa35}
Theorem \ref{thm32} remains valid if (ii) is required to hold more
generally for all observables on $\cL(X)$ of the form $p\circ H$,
where $p$ is any real polynomial with $p(0)=0$
 and $H\in \cO(X)$. To illustrate this, consider the linear flow $\phi$ generated on $X=\R^3$ by
\begin{equation}\label{eq3EX35}
\dot x =
\left[
\begin{array}{crc}
1 & -\pi  & 0 \\
\pi & 1 & 0 \\
0 & 0 & \alpha
\end{array}
\right] x \, ,
\end{equation}
with $\alpha = \ln 10 - \frac12 =1.802$. Clearly, $\sigma(\phi) =
\{1\pm \imath \pi, \alpha\}$ is exponentially $b$-nonresonant for all
$b\in \N \setminus \{1\}$. Taking for instance $p(x)=x^2$, the generalized
version of Theorem \ref{thm32} just mentioned implies that $[\phi_{\bullet}]_{j,k}^2$
is Benford or trivial for all $1\le j,k\le 3$. Note that by Lemma \ref{lem200},
$$
t \mapsto \sqrt{\sum\nolimits_{j,k=1}^3 [\phi_t]_{j,k}^2} = \sqrt{2 e^{2t} +
e^{2\alpha t} } = e^{\alpha t} \sqrt{ 1 +2 e^{-2 (\alpha - 1)t}}
$$
is Benford as well. Though this does not follow from even the
generalized theorem, it nevertheless suggests that $\| \phi_{\bullet} \|$ may also be
$b$-Benford for some or even all norms $\|\cdot\|$ on $\cL(X)$. In
fact, to guarantee the latter, exponential $b$-nonresonance
of an appropriate subset of $\sigma(\phi)$ suffices; see Theorem \ref{thm313}
below.

While $h(\phi_{\bullet})$ thus is Benford for {\em some\/}
  non-linear observables $h$ on $\cL(X)$ also, it should be noted that, on
the other hand, $h(\phi_{\bullet})$ may fail to be $b$-Benford even for
very simple polynomial observables $h$, despite $\sigma(\phi)$ being
exponentially $b$-nonresonant. Concretely, the implication
(i)$\Rightarrow$(ii) in Theorem \ref{thm32} fails if
the linear observable $H$ in (ii) is replaced by $h = p \circ H_1 + p
\circ H_2$ where $p$ is a real polynomial with $p(0)=0$, and $H_1,
H_2\in \cO(X)$. To see
this, let $\phi$ be again the linear flow on $\R^3$ generated by
(\ref{eq3EX35}), and take $p(x)=x^3$ as well as $H_1 = [\, \cdot \,
]_{1,1} - [\, \cdot \,]_{3,3}$ and $H_2 = [\, \cdot \, ]_{3,3}$. Then
\begin{align*}
h(\phi_t) & = ([\phi_t]_{1,1} - [\phi_t]_{3,3})^3 + [\phi_t]_{3,3}^3\\
& =
e^{(1+2\alpha)t} \cos (\pi t)
\left( 3 - 3 e^{-(\alpha - 1)t} \cos (\pi t) +
  e^{-2(\alpha - 1)t} \cos (\pi t)^2 \right)\quad \forall t \in \R \, ,
\end{align*}
and it is straightforward to see that $h(\phi_{\bullet})\ne 0$ is not $10$-Benford.

Finally, while the implication (i)$\Rightarrow$(ii) in Theorem
\ref{thm32} remains valid if the linear observable $H$ in (ii) is
replaced by $p\circ H$ for any polynomial $p$ with $p(0)=0$, this
implication may fail if, only slightly
more generally, $H$ is instead replaced by $\varphi \circ H$ where
$\varphi :\R \to \R$ is real-analytic with
$\varphi (0)=0$. For a simple example illustrating this with $\phi$ as above,
let $\varphi (x)=x^2/(1+x^2)$ and $H=[\, \cdot \, ]_{3,3}$. Then
$$
\varphi \circ H (\phi_t ) = \frac{[\phi_t]_{3,3}^2}{1 + [\phi_t]_{3,3}^2} =
\frac{1}{1+e^{-2\alpha t}} \quad \forall t \in \R \, ,
$$
and since $\lim_{t\to +\infty} \varphi \circ H(\phi_t) = 1$, clearly
$\varphi \circ
H(\phi_{\bullet})\ne 0$ is not $b$-Benford for any $b$.
\end{example}

By combining it with Lemma \ref{lem212}, Theorem \ref{thm32} can be
given a simpler form that applies in many situations of practical
interest. To this end, call a linear flow $\phi$ on $X$
{\em algebraically generated\/} if there exists a basis $v_1, \ldots ,
v_d$ of $X$ such that the (uniquely determined) numbers $a_{jk}\in \R$
with $A_{\phi}v_j = \sum_{k=1}^d a_{jk} v_k$ for all $j= 1, \ldots , d$ are all algebraic. In other words, all entries of the
coordinate matrix of $A_{\phi}$ relative to the basis $v_1, \ldots ,
v_d$ are algebraic numbers. Note that $\phi$
is algebraically generated if and only if $\sigma(\phi)$ consists of
algebraic numbers only. The following, then, is an immediate consequence of
Lemma \ref{lem212} and Theorem \ref{thm32}.

\begin{cor}\label{prop36}
For each algebraically generated linear
flow $\phi$ on $X$ the following are equivalent:
\begin{enumerate}
\item $\sigma (\phi) \cap \imath \R = \varnothing$;
\item For every $H\in \cO(X)$ either
  the function $H(\phi_{\bullet})$ is Benford, or $H(\phi_{\bullet})=0$.
\end{enumerate}
\end{cor}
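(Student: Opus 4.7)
The plan is to view Corollary \ref{prop36} as a direct synthesis of Theorem \ref{thm32} and Lemma \ref{lem212}, with only one small verification needed at the start, namely that $\sigma(\phi)$ consists of algebraic numbers whenever $\phi$ is algebraically generated. For this I would note that the characteristic polynomial of the coordinate matrix of $A_\phi$ (relative to the distinguished basis $v_1, \ldots, v_d$) has coefficients that are integer polynomial expressions in the algebraic entries $a_{jk}$, hence is a polynomial with algebraic coefficients; its roots, which are exactly the elements of $\sigma(\phi)$, are therefore algebraic. This immediately puts $\sigma(\phi)$ into the hypothesis of Lemma \ref{lem212}.

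Next I would handle (i)$\Rightarrow$(ii). Assuming $\sigma(\phi) \cap \imath \R = \varnothing$, Lemma \ref{lem212} gives, for every base $b \in \N \setminus \{1\}$, that $\sigma(\phi)$ is exponentially $b$-nonresonant. Applying the implication (i)$\Rightarrow$(ii) of Theorem \ref{thm32} to each such $b$, for every $H \in \cO(X)$ and every $b$, either $H(\phi_{\bullet}) = 0$ or $H(\phi_{\bullet})$ is $b$-Benford. The crucial point is that the alternative $H(\phi_{\bullet}) = 0$ is independent of $b$, so if $H(\phi_{\bullet}) \ne 0$ then $H(\phi_{\bullet})$ is $b$-Benford for every $b$, i.e., Benford.

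For the converse (ii)$\Rightarrow$(i), I would argue by contrapositive: suppose $\sigma(\phi) \cap \imath \R \ne \varnothing$. By Lemma \ref{lem212}, $\sigma(\phi)$ is then exponentially $b$-resonant for every $b$ (in fact, for any single fixed $b$ it suffices, since the purely imaginary element makes $\cZ \cap \imath \R \ne \varnothing$ hold regardless of $b$). Fixing e.g.\ $b = 10$ and applying the implication (ii)$\Rightarrow$(i) of Theorem \ref{thm32} in contrapositive form yields some $H \in \cO(X)$ for which $H(\phi_{\bullet}) \ne 0$ yet $H(\phi_{\bullet})$ is not $10$-Benford, hence certainly not Benford; this violates (ii).

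I do not anticipate any genuine obstacle: the entire argument is a bookkeeping exercise that combines two already-proved results. The only mildly delicate point is confirming that algebraic generation transfers to algebraicity of $\sigma(\phi)$, so that Lemma \ref{lem212} applies; this is standard linear algebra and a one-sentence remark should suffice.
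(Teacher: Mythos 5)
Your proof is correct and follows exactly the route the paper intends: the paper observes immediately before the corollary that algebraic generation of $\phi$ is equivalent to $\sigma(\phi)$ consisting of algebraic numbers, and then states the corollary is an immediate consequence of Lemma \ref{lem212} and Theorem \ref{thm32}. You have simply spelled out the bookkeeping (including the key observation that the alternative $H(\phi_{\bullet})=0$ is independent of $b$, so the for-all-$b$ quantifier distributes correctly), which is precisely what the paper leaves to the reader.
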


\begin{rem}\label{rem37}
A linear flow $\phi$ with $\sigma(\phi)\cap \imath \R =\varnothing$ is
commonly referred to as {\em hyperbolic}; e.g., see
\cite{Amann}. Thus, an algebraically generated linear
flow exhibits the Benford--or--trivial dichotomy of Corollary
\ref{prop36}(ii) if and only if it is hyperbolic.
\end{rem}

\begin{example}
In order to decide whether $\sigma (\phi) \cap \imath \R =
\varnothing$, it is not necessary to explicitly determine
$\sigma (\phi)$. For instance, if $\phi$ is a linear flow on $\R^2$
then $\sigma (\phi) \cap \imath \R =  \varnothing$ if and only if
\begin{equation}\label{eq38a}
\mbox{\rm trace}\, A_{\phi} \det A_{\phi} \ne 0 \quad \mbox{\rm or}
\quad
\det A_{\phi} < 0 \, .
\end{equation}
For a concrete example, let $\alpha, \beta \in \R$ be algebraic
and consider the linear second-order equation
\begin{equation}\label{eq38b}
\ddot y + \alpha \dot y + \beta y = 0 \, .
\end{equation}
Since $y=H(\phi_{\bullet})$ with the appropriate $H\in \cO (\R^2)$ and
the linear flow $\phi$ on $\R^2$ generated by
$$
\dot x = 
\left[
\begin{array}{rr}
0 & 1 \\
-\beta & - \alpha
\end{array}
\right] x \, ,
$$
Corollary \ref{prop36}, together with (\ref{eq38a}), shows that
every solution $y\ne 0$ of (\ref{eq38b}) is Benford if and only if
$\alpha \beta \ne 0$ or $\beta < 0$, or equivalently, if and only if $(1+\alpha^2)|\beta|>\beta$.
\end{example}

To motivate the second main result of this section, Theorem
\ref{thm310} below, note that even if $\sigma(\phi)$ is exponentially $b$-resonant,
the signals $H(\phi_{\bullet})$ may nevertheless be $b$-Benford for some or in
fact for {\em most\/} linear observables $H$.

\begin{example}\label{exa39}
Let $\phi$ be the linear flow on $X=\R^2$ generated by $\dot x = Ax$ with
$$
A = 
\left[
\begin{array}{cc}
1 & 1 \\ 1& 1
\end{array}
\right]  \, .
$$
Since $\sigma(\phi)= \{0,2\}$ is exponentially $b$-resonant for every
$b\in \N \setminus \{1\}$, there exists a linear observable $H$ for
which $H(\phi_{\bullet})\ne 0$ is not $b$-Benford. A simple example is
$H=[\, \cdot \,]_{1,1} - [\, \cdot \,]_{1,2}$ yielding
$H(\phi_t)\equiv 1$. However, from the explicit formula
$$
\phi_t = {\textstyle \frac12} e^{2t} A - {\textstyle \frac12} (A-
2I_{\R^2}) \quad \forall t \in \R \, ,
$$
it is clear that $H(\phi_{\bullet})$ is Benford {\em unless\/}
$H(A)=0$. For {\em most\/} $H\in \cO(X)$,
therefore, $H(\phi_{\bullet})$ is Benford. On the other hand, for the
time-reversed flow $\psi$, i.e., for
$\psi_t \equiv \phi_{-t}$, a signal $H(\psi_{\bullet})$ can be
$b$-Benford {\em only\/} if $H( A - 2 I_{\R^2})=0$. Thus $H(\psi_{\bullet})$ is, for most $H\in \cO(X)$, neither $b$-Benford nor trivial.
\end{example}

To formalize the observation made in Example \ref{exa39}, recall first
that the space $\cO(X)$ can, upon choosing a basis, be identified with $\R^{d^2}$. In
particular, therefore, the notion of a property holding for (Lebesgue) {\em almost
  every\/} $H\in \cO(X)$ is well-defined and independent of the
choice of basis. Given any $A\in \cL(X)$, for each $z\in \sigma (A)$
define $k_z\ge 0$ to be the maximal integer for which  
$$
\mbox{\rm rank} (A - z I_X)^{k+1} < \mbox{\rm rank} (A - z
I_X)^{k} \quad  \mbox{\rm if } z \in \R \, ,
$$ 
and
$$
\mbox{\rm rank} (A^2 - 2\Re
z A + |z|^2 I_X)^{k+1} < \mbox{\rm rank} (A^2  - 2\Re z
A + |z|^2 I_X)^{k}
\quad  \mbox{\rm if }  z \in \C \setminus
\R\, . 
$$
Equivalently, $1\le k_z+1\le d$ is the size of the largest block
associated with the eigenvalue $z$ in the Jordan Normal Form
(over $\C$) of $A$. Denote by $(r_A,k_A)$ the (unique) element of
$\{(\Re z, k_z): z \in \sigma(A)\}$ that is maximal in the
lexicographic order on $\R\times \Z$, and define the {\em dominant spectrum\/} of $A$ as
$$
\sigma_{\rm dom} (A):= \{z\in \sigma (A) : \Re z = r_A, k_z = k_A\} \, .
$$
Thus $\sigma_{\rm dom}(A)\subset \sigma(A)$ consists of all right-most
eigenvalues of $A$ that have a Jordan block of maximal size
associated with them. As
it turns out, for every linear flow $\phi$ on $X$, the set
$\sigma_{\rm dom}(\phi):= \sigma_{\rm dom}(A_{\phi})$, though usually
constituting but a
small part of $\sigma(\phi)$, governs the Benford property of
$H(\phi_{\bullet})$ for most linear observables $H$.

\begin{theorem}\label{thm310}
Let $b\in \N \setminus \{1\}$. For each linear flow $\phi$ on $X$ the following are equivalent:
\begin{enumerate}
\item The set $\sigma_{\rm dom}(\phi)$ is exponentially $b$-nonresonant;
\item For almost every $H\in \cO(X)$ the function
  $H(\phi_{\bullet})$ is $b$-Benford.
\end{enumerate}
\end{theorem}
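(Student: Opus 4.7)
The plan is to reduce to Theorem~\ref{thm32} applied to an auxiliary linear flow whose spectrum captures exactly the dominant asymptotics of $\phi$.

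First, using the real Jordan normal form of $A_{\phi}$, every signal admits the decomposition
\[
H(\phi_t) = \frac{t^{k_A}}{k_A!}\, e^{tr_A}\bigl(D_H(t) + R_H(t)\bigr) \, ,
\]
where
\[
D_H(t) = \sum\nolimits_{z \in \sigma_{\rm dom}(\phi)} \bigl(\alpha_z(H) \cos(t\Im z) + \beta_z(H) \sin(t\Im z)\bigr)
\]
is a real-analytic trigonometric polynomial depending $\R$-linearly on $H$, and $R_H(t)\to 0$ as $t\to +\infty$. A straightforward extension of Lemma~\ref{lem31} (applied to $\cZ=\sigma_{\rm dom}(\phi)$) shows that the map $H\mapsto(\alpha_z(H),\beta_z(H))_z$ is surjective, so its kernel $V$ is a proper linear subspace of $\cO(X)$ and hence $\lambda(V)=0$.

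For (i)$\Rightarrow$(ii), I will introduce an auxiliary linear flow $\psi$ on an appropriate $\R^{d'}$ whose generator has spectrum $\sigma(\psi) = \sigma_{\rm dom}(\phi)$, with each eigenvalue possessing a Jordan block of size $k_A+1$. Then $\tfrac{t^{k_A}}{k_A!}\,e^{tr_A}D_H(t)$ coincides with $H'(\psi_t)$ for some $H'\in \cO(\R^{d'})$ determined by $H$. Since $\sigma(\psi)$ is exponentially $b$-nonresonant by assumption, Theorem~\ref{thm32} applied to $\psi$ yields that $\tfrac{t^{k_A}}{k_A!}\,e^{tr_A}D_H(t)$ is $b$-Benford whenever $H\notin V$. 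It then remains to show that adding the subdominant correction $\tfrac{t^{k_A}}{k_A!}\,e^{tr_A}R_H(t)$ preserves the Benford property. This will be done via Weyl's criterion, splitting $[0,T]$ into the region $\{t:|D_H(t)|>\ep\}$ on which $|R_H/D_H|\to 0$ uniformly in $t$, and its complement, whose asymptotic density vanishes as $\ep\to 0$ because the distribution function of $|D_H|$ is continuous at zero.

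For (ii)$\Rightarrow$(i), suppose $\sigma_{\rm dom}(\phi)$ is exponentially $b$-resonant. Lemma~\ref{prop7char} produces $z_1,\ldots,z_L\in\sigma_{\rm dom}(\phi)$ with $\Re z_1=\cdots=\Re z_L=r_A$, with $\tfrac{1}{\pi}\Im z_1,\ldots,\tfrac{1}{\pi}\Im z_L$ being $\Q$-independent, together with integers $p_1,\ldots,p_L\in\Z$, $q\in\N$ realising $\Re z_1=\sum_\ell \tfrac{p_\ell}{q}\tfrac{\ln b}{\pi}\Im z_\ell$. Using Lemma~\ref{lem31} I build a family $H_u$ ($u\in\R^L$) with $D_{H_u}(t)=\sum_\ell u_\ell\cos(t\Im z_\ell)$, and then repeat the key computation at the end of the proof of Theorem~\ref{thm32}---absorbing the polynomial prefactor $k_A\log_b t$ via Lemma~\ref{lem200}(iv) and the subdominant $R_{H_u}$ via the same $\ep$-splitting above---to obtain
\[
\lim\nolimits_{T\to+\infty}\frac{1}{T}\int_0^T e^{2\pi\imath k q\log_b|H_u(\phi_t)|}\,{\rm d}t = \int_{\T} e^{2\pi\imath ky}\,{\rm d}\bigl(\lambda_{\T^L}\circ P_u^{-1}\bigr)(y) \quad \forall k\in\Z \, .
\]
Proposition~\ref{propnonuni} together with the continuity of $u\mapsto\lambda_{\T^L}\circ P_u^{-1}$ guarantees that $U=\{u:\lambda_{\T^L}\circ P_u^{-1}\ne\lambda_\T\}$ is non-empty and open in $\R^L$, so $H_u(\phi_\bullet)$ is not $b$-Benford for any $u\in U$. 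Since the calculation depends only on the dominant coefficients of $H$, pulling $U$ back through the surjection $H\mapsto(\alpha_z(H),\beta_z(H))$ produces an open positive-measure subset of $\cO(X)$ on which $H(\phi_\bullet)$ is not $b$-Benford, contradicting (ii).

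The main obstacle throughout will be the subdominant remainder $R_H(t)$: pointwise, $\log_b|1+R_H(t)/D_H(t)|$ need not tend to zero, because the almost periodic function $D_H$ can approach zero along sparse times. The resolution rests on the quantitative fact that the distribution function of $|D_H|$ is continuous at zero---valid because $D_H$ is a non-trivial real-analytic trigonometric polynomial---combined with the $\ep$-splitting of $[0,T]$ described above.
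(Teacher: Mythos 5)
Your proposal is essentially correct in outline, but it takes a genuinely different route for the forward direction, and in the reverse direction there are a couple of spots that need tightening before the argument actually closes.

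For (i)$\Rightarrow$(ii): Your idea of building an auxiliary flow $\psi$ with $\sigma(\psi)=\sigma_{\rm dom}(\phi)$ and Jordan blocks of size $k_A+1$, applying Theorem~\ref{thm32} to dispatch the dominant part $\tfrac{t^{k_A}}{k_A!}e^{tr_A}D_H(t)$, and then absorbing the subdominant remainder by the $\varepsilon$-splitting/continuity-at-zero argument is sound: the level-set density estimate does hold because $D_H$ factors (via a choice of $\Q$-independent frequencies) through a non-trivial real-analytic function on a torus, whose zero set is a $\lambda_{\T^L}$-nullset. The paper, however, does not introduce an auxiliary flow; it goes directly through Proposition~\ref{propudisc}, whose $\alpha\ln n$ term accounts for the $t^{k_A}$-prefactor and whose ``$+z_n$'' term with $z_n\to 0$ already absorbs the subdominant correction $H\circ G$, all in a single discrete-time step (then transferred to continuous time via Lemma~\ref{lem230}). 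Your route is correct but longer; Proposition~\ref{propudisc} was engineered to make exactly this reduction unnecessary. Also note you cite Lemma~\ref{lem31} to build the family $H_u$ with a $t^{k_A}$-prefactor, but that lemma produces signals \emph{without} the prefactor; the tool you want is Lemma~\ref{prop312}, which is the dominant-spectrum analogue.

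For (ii)$\Rightarrow$(i): Here you are following the paper's idea (Lemma~\ref{prop7char} $\to$ Proposition~\ref{propnonuni} $\to$ continuity), but the ``pulling $U$ back through the surjection'' step as written is a genuine gap. Your explicit limit formula is established only for the family $H_u$, whose dominant coefficients form a zero-measure slice (all $\beta_z=0$, and $\alpha_z=0$ off $\{z_1,\ldots,z_L\}$). Pulling back an $L$-dimensional open set $U$ through the restriction of the surjection to that slice does \emph{not} give a positive-measure subset of $\cO(X)$. What is actually needed is what the paper does: establish the analogous limit formula for \emph{every} $H$ satisfying the non-degeneracy condition $H(U_z)\ne0$ or $H(V_z)\ne 0$ for some $z$, with a function $Q_H:\T^L\to\T$ replacing $P_u$, check $\lambda_{\T^L}\circ Q_{H^*}^{-1}=\lambda_{\T^L}\circ P_{u^*}^{-1}\ne\lambda_\T$ via the measure-preserving map $M_q$, and invoke continuity of $H\mapsto\lambda_{\T^L}\circ Q_H^{-1}$ on the non-empty open non-degeneracy set to conclude that a non-empty open (hence positive-measure) set of observables fails to be $b$-Benford. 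Your sentence ``the calculation depends only on the dominant coefficients'' gestures at this, but the step from the one-parameter family $H_u$ to an honest open set in $\cO(X)$ has to be carried out explicitly; as stated, the argument would only produce a nullset of bad observables, which proves nothing.
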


The proof of Theorem \ref{thm310} makes use of the following two
observations which are a direct analogue of Lemma
\ref{lem31} and an immediate consequence of
\cite[Lem.5.3]{BE-JDDE}, respectively. The routine verification of both assertions
is left to the reader.

\begin{lem}\label{prop312}
Let $\phi$ be a linear flow on $X$. Given any non-empty
set $\cZ \subset \sigma_{\rm dom} (\phi)$ and any vector $u\in \R^{\cZ}$,
there exists $H\in \cO(X)$ such that, with
$r=r_{A_{\phi}}\in \R$ and $k=k_{A_{\phi}}\in \{0,\ldots , d-1\}$,
$$
H(\phi_t) = e^{rt} t^k \sum\nolimits_{z\in \cZ} u_z \cos (t\Im z)
\quad \forall t \in \R \, .
$$
\end{lem}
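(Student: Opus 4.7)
The approach is to mimic the construction in Lemma \ref{lem31}, replacing eigenvectors by Jordan chains of maximal length so that the leading $t^k$ term of the matrix exponential is isolated. By definition of $\sigma_{\rm dom}(\phi)$, every $z\in\cZ$ has $\Re z=r$ and possesses a Jordan block of (maximal) size $k+1$. I would first select, for each real $z\in\cZ$, a real Jordan chain $v_z^{(0)},v_z^{(1)},\ldots,v_z^{(k)}\in X$ with $(A_\phi-rI_X)v_z^{(j)}=v_z^{(j-1)}$ for $j\ge 1$ and $(A_\phi-rI_X)v_z^{(0)}=0$; and for each non-real $z\in\cZ$, a real ``pair-chain'' $\bigl(v_z^{(j)},\widetilde v_z^{(j)}\bigr)_{j=0}^{k}$ with
$$
A_\phi v_z^{(j)}=rv_z^{(j)}-(\Im z)\widetilde v_z^{(j)}+v_z^{(j-1)},\qquad A_\phi\widetilde v_z^{(j)}=rv_z^{(j)}\cdot 0+(\Im z)v_z^{(j)}+r\widetilde v_z^{(j)}+\widetilde v_z^{(j-1)},
$$
with the convention $v_z^{(-1)}=\widetilde v_z^{(-1)}=0$. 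Existence of such chains is standard real Jordan theory; maximality of the block is what guarantees the length is exactly $k+1$.

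Next I would exponentiate. For real $z\in\cZ$ one obtains
$\phi_t v_z^{(k)}=e^{rt}\sum_{j=0}^{k}\tfrac{t^{j}}{j!}v_z^{(k-j)}$,
while for non-real $z\in\cZ$, writing $\beta=\Im z$, the analogue of (\ref{eqphit}) gives
$$
\phi_t v_z^{(k)}=e^{rt}\sum_{j=0}^{k}\tfrac{t^{j}}{j!}\bigl(v_z^{(k-j)}\cos(\beta t)-\widetilde v_z^{(k-j)}\sin(\beta t)\bigr),
$$
$$
\phi_t\widetilde v_z^{(k)}=e^{rt}\sum_{j=0}^{k}\tfrac{t^{j}}{j!}\bigl(v_z^{(k-j)}\sin(\beta t)+\widetilde v_z^{(k-j)}\cos(\beta t)\bigr).
$$
The crucial observation is that the top power $t^k$ in each expansion involves \emph{only} the bottom chain vectors $v_z^{(0)}$ and $\widetilde v_z^{(0)}$.

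I then pick suitable linear functionals on $X$ to extract this dominant term. For real $z\in\cZ$, choose $h_z\in X^\ast$ with $h_z(v_z^{(0)})=k!\,u_z$ vanishing on every other generalized eigenvector appearing in a Jordan basis of $A_\phi$; this is possible because the chosen vectors are linearly independent. For non-real $z\in\cZ$, choose $h_z\in X^\ast$ with $h_z(v_z^{(0)})=h_z(\widetilde v_z^{(0)})=\tfrac{k!}{2}u_z$ and vanishing on every other Jordan-basis vector. Define
$$
H(A):=\sum_{z\in\cZ\cap\R}h_z(Av_z^{(k)})+\sum_{z\in\cZ\setminus\R}\bigl(h_z(Av_z^{(k)})+h_z(A\widetilde v_z^{(k)})\bigr)\qquad\forall A\in\cL(X),
$$
which is manifestly an element of $\cO(X)$.

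Finally I would verify (\ref{eqLIN})'s analogue by substituting $A=\phi_t$ and invoking the expansions above. All terms with $j<k$ are killed by the vanishing conditions on $h_z$ (which were chosen to annihilate $v_z^{(1)},\ldots,v_z^{(k)}$ and $\widetilde v_z^{(1)},\ldots,\widetilde v_z^{(k)}$), leaving only the $t^k$ contribution. For non-real $z$, the symmetric choice $h_z(v_z^{(0)})=h_z(\widetilde v_z^{(0)})$ makes the $\sin(\beta t)$ terms from $\phi_t v_z^{(k)}$ and $\phi_t\widetilde v_z^{(k)}$ cancel, yielding $e^{rt}t^{k}u_z\cos(\beta t)$; for real $z$, $\Im z=0$ makes $\cos(t\Im z)=1$ and the factor $k!$ in $h_z(v_z^{(0)})$ compensates the $\tfrac{1}{k!}$ from the matrix exponential. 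The main (mild) obstacle is precisely this sign/symmetry bookkeeping for complex conjugate Jordan structures, but the choice of $h_z$ is dictated by exactly the same principle as in the semisimple case treated in Lemma \ref{lem31}, so the verification is routine.
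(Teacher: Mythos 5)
The paper leaves this proof to the reader, describing the result as a direct analogue of Lemma \ref{lem31}, and your proposal is exactly the intended extension: take Jordan chains of maximal length $k+1$ for each $z\in\cZ$ (possible precisely because $z\in\sigma_{\rm dom}(\phi)$ has $\Re z=r$ and $k_z=k$), observe that the top power $t^k$ in the real Jordan exponential involves only the bottom chain vectors $v_z^{(0)},\widetilde v_z^{(0)}$, and choose $h_z$ so that the lower-order terms are annihilated while the symmetric choice $h_z(v_z^{(0)})=h_z(\widetilde v_z^{(0)})$ cancels the $\sin$ contributions. This is correct, and the coefficient bookkeeping ($k!$ and $k!/2$) is right. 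Two cosmetic remarks: the display defining $A_\phi\widetilde v_z^{(j)}$ contains a stray summand ``$rv_z^{(j)}\cdot 0$''; and requiring $h_z$ to vanish on the whole Jordan basis is stronger than needed --- only vanishing on $v_z^{(1)},\ldots,v_z^{(k)},\widetilde v_z^{(1)},\ldots,\widetilde v_z^{(k)}$ is used, since $\phi_t v_z^{(k)}$ and $\phi_t\widetilde v_z^{(k)}$ stay in the span of the $z$-chain --- and stating it this minimal way also sidesteps any ambiguity when $\cZ$ contains a conjugate pair $z,\overline z$ (whose real chains share underlying vectors).
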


\begin{lem}\label{prop312a}
Let $\Omega \subset \R^+$ be finite. For each function $f: \Omega \to
\C$ the following are equivalent:
\begin{enumerate}
\item $\lim_{t\to +\infty} \Re \left( \sum_{\omega \in \Omega} f(\omega)
  e^{\imath \omega t}\right)$ exists;
\item $f(\omega ) = 0$ for every $\omega \in \Omega \setminus \{0\}$.
\end{enumerate}
\end{lem}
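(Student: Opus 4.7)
The plan is to dispatch the easy direction and then obtain the hard one by Fourier-type time-averaging. The implication (ii)$\Rightarrow$(i) is immediate, since $f$ vanishing on $\Omega \setminus \{0\}$ reduces the trigonometric sum to the constant $f(0)$ (or to $0$ if $0 \notin \Omega$), whose real part trivially converges as $t \to +\infty$.

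For the converse (i)$\Rightarrow$(ii), I will rewrite
$$
g(t) := \Re\Bigl(\sum\nolimits_{\omega \in \Omega} f(\omega) e^{\imath \omega t}\Bigr) = \tfrac12 \sum\nolimits_{\omega \in \Omega} f(\omega) e^{\imath \omega t} + \tfrac12 \sum\nolimits_{\omega \in \Omega} \overline{f(\omega)} e^{-\imath \omega t},
$$
exhibiting $g$ as a trigonometric polynomial whose frequencies lie in the finite set $\Omega \cup (-\Omega)$. Assuming $g(t) \to L \in \R$, I will isolate the coefficient of each $\mu \in \Omega \setminus \{0\}$ by evaluating the averages $A_T := \frac{1}{T} \int_0^T g(t) e^{-\imath \mu t}\, {\rm d}t$ along two different routes and equating their limits.

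The first route uses the hypothesis directly: writing $g = L + h$ with $h(t) \to 0$, the $L$-piece of $A_T$ equals $L(e^{-\imath \mu T} - 1)/(-\imath \mu T)$ and so tends to $0$, while the $h$-piece is controlled by splitting $[0,T]$ into $[0,T_0]$ (contributing $O(T^{-1})$) and $[T_0,T]$ (contributing at most $\sup_{t \geq T_0} |h(t)|$, which vanishes as $T_0 \to \infty$); hence $A_T \to 0$. The second route expands $g$ and applies the elementary identity $\frac{1}{T} \int_0^T e^{\imath \nu t}\,{\rm d}t \to \mathbf{1}_{\{\nu = 0\}}$ term-by-term, yielding $A_T \to \tfrac12 f(\mu)$; equating the two limits forces $f(\mu) = 0$ for each $\mu \in \Omega \setminus \{0\}$.

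The only subtlety, which I view as the main (and modest) obstacle, is ensuring that frequencies in the expansion of $g$ do not collide in a way that spoils the identification of the coefficient of $e^{\imath \mu t}$. This is precisely where the hypothesis $\Omega \subset \R^+$ enters: since $\mu > 0$, every element of $-\Omega$ is non-positive and hence cannot equal $\mu$, while among elements of $\Omega$ only $\mu$ itself equals $\mu$. With this bookkeeping in place the argument runs smoothly, and no further delicate analysis is needed.
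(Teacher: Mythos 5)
Your proof is correct. The paper states the result without proof (the verification is explicitly left to the reader), so there is no written argument to compare against, but your Cesàro/Fourier averaging of $g(t)e^{-\imath\mu t}$ is the natural way to extract the coefficient of $e^{\imath\mu t}$, the two computations of the limit of $A_T$ are both sound, and the observation that $\Omega\subset\R^+$ rules out the collision $-\omega=\mu$ for $\mu>0$ is exactly the point that needs checking. The easy direction (ii)$\Rightarrow$(i) is dispatched correctly as well.
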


\begin{proof}[Proof of Theorem \ref{thm310}]
For convenience, define $\sigma_{\rm dom}^+:= \{z\in \sigma_{\rm dom}(\phi) : \Im
z \ge 0\}$, and let $r= r_{A_{\phi}}$ and $k=k_{A_{\phi}}$; clearly,
the set $\sigma_{\rm dom}^+ \subset r + \imath \R$ is non-empty, and is
exponentially $b$-nonresonant if and only if $\sigma_{\rm dom}(\phi)$ is.
Let $L$ be the dimension of $\mbox{\rm
  span}_{\Q}\left\{
\frac{\ln b}{\pi} \Im z : z \in \sigma_{\rm dom}^+
\right\}$, and observe that $L=0$ if and only if $\sigma_{\rm
  dom}^+= \{r\}$. Recall that, as a
consequence of, for instance, the Jordan Normal Form Theorem, there
exists a family $U_z, V_z$ ($z\in \sigma_{\rm dom}^+$) in $\cL(X)$ such that
\begin{equation}\label{peq1}
\phi_t = e^{rt} t^k \left(
\sum\nolimits_{z\in \sigma_{\rm dom}^+} \bigl( 
U_z \cos (t\Im z) + V_z \sin (t\Im z)
\bigr) + G(t)
\right)  \quad \forall t > 0 \, ,
\end{equation}
where $G$ is continuous with $\lim_{t\to
  +\infty} G(t)=0$, and $V_r = 0$ in case $r\in \sigma_{\rm dom}^+$. Moreover, $U_z \ne 0$ or $V_z \ne 0$ in
(\ref{peq1}) for at least one $z\in \sigma_{\rm dom}^+$, since otherwise
$\lim_{t\to +\infty} e^{-rt} t^{-k} H(\phi_t)=0$ for every $H\in \cO(X)$, whereas Lemma \ref{prop312}
guarantees, for each $z\in \sigma_{\rm dom}^+$, the existence of an $H$ with
$ e^{-rt} t^{-k} H(\phi_t) \equiv  \cos (t\Im z)$, an obvious contradiction.

Consider first the case $L=0$. Here $\sigma_{\rm dom}^+ = \{r\}$,
and (\ref{peq1}) yields
$$
H(\phi_t) = e^{rt} t^k \bigl( H(U_r) + H\circ G(t) \bigr) \quad
\forall t > 0 \, ,
$$
where $U_r \ne 0$. If $H(U_r)\ne 0$ then, for all sufficiently large
$t>0$,
$$
\log_b |H(\phi_t)| = \frac{rt}{\ln b} + \frac{k}{\ln b}\ln t +
\frac1{\ln b} \ln |H(U_r) + H\circ G(t)| \, .
$$
Note that $\sigma_{\rm dom}=\{r\}$ is exponentially $b$-resonant if
and only if $r\ne 0$. Lemma \ref{lem200} shows that $\log_b
|H(\phi_{\bullet})|$ is c.u.d.\ mod $1$ if and only if $t \mapsto
rt/\ln b$ is. Lemma \ref{lem230} and Proposition \ref{propudisc} imply
that the latter is the case if $r\ne 0$, while it is obviously not the
case if $r= 0$. Provided that $H(U_r)\ne 0$, therefore,
$H(\phi_{\bullet})$ is $b$-Benford precisely if $r\ne 0$. Since $\{H :
H(U_r)=0\}$ is a nullset (in fact, a proper subspace) in $\cO(X)$, it
follows that (i)$\Leftrightarrow$(ii) whenever $L=0$.

It remains to consider the case $L\ge 1$. In this case, pick $z_1,
\ldots, z_L\in \sigma_{\rm dom}^+$ such that $\frac{\ln b}{\pi}\Im
z_1, \ldots , \frac{\ln b}{\pi}\Im z_L$ are $\Q$-independent; for
convenience, let $\cZ:= \{z_1, \ldots , z_L\}$. For every
$z\in \sigma_{\rm dom}^+ \setminus \cZ$ there exists an integer
$L$-tuple $p^{(z)}$, i.e., $p^{(z)}\in \Z^L$, such that
\begin{equation}\label{peq2}
\Im z = \sum\nolimits_{\ell = 1}^L \frac{p_{\ell}^{(z)}}{q} \Im
z_{\ell} = \frac{p^{(z)} \cdot (\Im z_1, \ldots , \Im z_L)}{q} \, ,
\end{equation}
with the appropriate $q\in \N$ independent of $z$; here $u \cdot
v$ denotes the standard inner product on $\R^L$, that is,
$u\cdot v= \sum_{\ell=1}^L u_{\ell} v_{\ell}$. In addition, for
every $1\le j,\ell \le L$ let
$$
p_j^{(z_{\ell})} := \left\{
\begin{array}{cll}
q & & \mbox{\rm if } j = \ell \, , \\
0 & & \mbox{\rm otherwise} \, ;
\end{array}
\right.
$$
with this, (\ref{peq2}) is valid for all $z\in \sigma_{\rm
  dom}^+$. Note that the set $\{\pm p^{(z)} : z\in \sigma_{\rm
  dom}^+\}\subset \Z^L$ contains at least $2\# \sigma_{\rm dom}^+ -1$
different elements, and hence $z\mapsto p^{(z)}$ is one-to-one.

With these ingredients, given any $H\in \cO(X)$, deduce from
(\ref{peq1}) that
\begin{align}\label{peq2aa}
H(\phi_{qt}) & = e^{rqt} (qt)^k \left(
\sum\nolimits_{z\in \sigma_{\rm dom}^+} \bigl( 
H(U_z) \cos (qt \Im z) + H(V_z) \sin (qt \Im z)
\bigr) + H \circ G(qt)
\right) \nonumber \\[-2mm]
& \quad \\[-3mm]
& = e^{rqt} (qt)^k \left(
F_H \left( \left\langle \left( \frac{t\Im z_1}{2\pi}, \ldots ,
      \frac{t\Im z_L}{2\pi}\right)\right\rangle \right) + H\circ G(qt)
\right) \quad \forall t > 0 \, , \nonumber
\end{align}
where the smooth function $F_H:\T^{L}\to \R$ is given by
$$
F_H  (\langle x \rangle)  = \Re \left(
\sum\nolimits_{z\in \sigma_{\rm dom}^+} \bigl( H(U_z) - \imath H(V_z)
\bigr) e^{2\pi \imath p^{(z)} \cdot x}
\right) \, .
$$
Note that $F_H =0$ only if $H(U_z)=H(V_z)=0$ for all $z\in \sigma_{\rm
dom}^+$, whereas otherwise the set $\{\langle x \rangle : F_H(\langle
x \rangle) = 0\}$ is a $\lambda_{\T^L}$-nullset. Thus, $F_H(\langle x\rangle)\ne 0$ for
$\lambda_{\T^L}$-almost all $\langle x\rangle\in \T^L$ if and only if
\begin{equation}\label{peq2a}
H(U_z)\ne 0 \enspace \mbox{\rm or} \enspace H(V_z)\ne 0 \enspace
\mbox{\rm for some } z\in \sigma_{\rm dom}^+ \, .
\end{equation}

To establish (i)$\Rightarrow$(ii), assume that $\sigma_{\rm
  dom}(\phi)$ is exponentially $b$-nonresonant, fix any $H\in \cO(X)$,
and let $f_H := H(\phi_{\bullet})$. Deduce from (\ref{peq2aa}) that, for
all $n\in \N$ and $\delta>0$,
$$
f_H(q n\delta) = e^{rq n\delta} q^kn^k\delta^k 
\left(
F_H \left( 
\left\langle
\left(
n \frac{\delta \Im z_1}{2\pi} , \ldots , n \frac{\delta\Im z_L}{2\pi}
\right)
\right\rangle
\right) + H \circ G (qn\delta)
\right) \, .
$$
Observe that the $L+2$ numbers $1, r q\delta / \ln b,
\frac1{2\pi} \delta \Im z_1 , \ldots , \frac1{2\pi} \delta \Im z_L$ are
$\Q$-independent for all but countably many $\delta>0$, and whenever they
are, $f_H(qn\delta)\ne 0$ for all sufficiently large $n$. Hence by
Proposition \ref{propudisc}, with $d=L$, $\vartheta_0 = r q \delta /\ln b$ and
$\vartheta_{\ell}= \frac1{2\pi} \delta \Im z_{\ell}$ for $\ell = 1, \ldots ,
L$, and with
$$
\alpha = \frac{k}{\ln b} \, , \quad
\beta = \frac1{\ln b} \, , \quad
F = q^k \delta^k F_H \, , \quad
(z_n) = \bigl( q^k \delta^k H \circ G(qn \delta )\bigr) \, ,
$$
the sequence $\bigl(\log_b |f_H(qn\delta)|\bigr)$ is u.d.\ mod $1$ for almost all
$\delta > 0$. Lemma \ref{lem230} shows that $\log_b|f_H|$ is c.u.d.\ mod
$1$, i.e., $f_H = H(\phi_{\bullet})$ is $b$-Benford. In summary, given
any $H\in \cO(X)$, the signal $H(\phi_{\bullet})$ is $b$-Benford
whenever (\ref{peq2a}) holds. Since $U_z \ne 0$ or $V_z \ne 0$ for at
least one $z\in \sigma_{\rm dom}^+$, the set
$$
\bigl\{H : H(U_z)= H(V_z) =0 \: \, \forall z\in \sigma_{\rm dom}^+ \bigr \} \: \subset \: \cO(X)
$$
is a nullset (in fact, a proper subspace) in $\cO(X)$. For (Lebesgue)
almost every $H\in \cO(X)$, therefore, $H(\phi_{\bullet})$ is $b$-Benford.

To prove (ii)$\Rightarrow$(i), assume that $\sigma_{\rm dom}(\phi)$ is
exponentially $b$-resonant. By Lemma \ref{prop7char}, there exist
integers $\widetilde{p}_1, \ldots , \widetilde{p}_L$ and
$\widetilde{q}\in \N$ such that
$$
r = \sum\nolimits_{\ell = 1}^L
\frac{\widetilde{p}_{\ell}}{\widetilde{q}} \frac{\ln b}{\pi}  \Im z_{\ell} \, .
$$
Use Proposition \ref{propnonuni} with $d=L$, $p_{\ell} = 2
\widetilde{p}_{\ell}$ for $\ell =1,\ldots, L$, and $\alpha = \widetilde{q}/\ln
b$ to choose $u^* \in \R^L \setminus \{0\}$ such that
$\lambda_{\T^L}\circ P_{u^*}^{-1}\ne \lambda_{\T}$, and use Lemma
\ref{prop312} to pick $H^*\in \cO(X)$ with
$$
H^*(\phi_t) = e^{rt} t^k \sum\nolimits_{\ell = 1}^L u_{\ell}^* \cos
(t\Im z_{\ell}) \quad \forall t \in \R \, .
$$
It follows that
$$
e^{-rqt}(qt)^{-k} H^*(\phi_{qt}) = \sum\nolimits_{\ell = 1}^L u_{\ell}^* \cos (qt \Im
z_{\ell})  = \Re \left( \sum\nolimits_{\ell=1}^L u_{\ell}^* e^{\imath q
  t \Im z_{\ell}}\right) \quad \forall t > 0 \, ,
$$
whereas (\ref{peq2aa}) yields
$$
e^{-rqt} (qt)^{-k} H^*(\phi_{qt}) = \Re \left( \sum\nolimits_{z\in \sigma_{\rm
    dom}^+}  \bigl( H^*(U_z) - \imath H^*(V_z) \bigr) e^{\imath q t
  \Im z}
\right)
+ H^*\circ G(qt) \, .
$$
Since $\lim_{t\to +\infty} H^*\circ G(qt)=0$ and $\Im z_{\ell}>0$ for
all $\ell = 1, \ldots , L$, Lemma \ref{prop312a} shows that
for each $z\in \sigma_{\rm dom}^+$, 
$$
H^*(U_z) = \left\{
\begin{array}{cll}
u_{\ell}^* & & \mbox{\rm if } z=z_{\ell} \\
0 & & \mbox{\rm otherwise }
\end{array}
\right.
\enspace \mbox{\rm and} \quad
H^*(V_z)=0 \, .
$$
Next, pick any $H\in \cO(X)$ that satisfies (\ref{peq2a}), and
consider the function $g_H(t):= \widetilde{q} \log_b |t^{-k}
H(\phi_t)|$ for $t>0$. It follows from (\ref{peq2aa}) that, for almost
all $\delta>0$ and all sufficiently large $n\in \N$,
\begin{align*}
\bigl \langle g_H(qn\delta) \bigr \rangle & = \bigl\langle \widetilde{q} \log_b
|(qn\delta)^{-k} H(\phi_{qn\delta})| \bigr\rangle \\[1mm]
& = \left\langle
qn\delta \frac{\widetilde{q}r}{\ln b} + \frac{\widetilde{q}}{\ln b}
\ln \left|
F_H \left(
\left\langle \left(
n\frac{\delta \Im z_1}{2\pi} \, , \ldots , n \frac{\delta \Im z_L}{2\pi}
\right) \right\rangle
\right) + H\circ G (q n\delta)
\right|
\right\rangle \\[1mm]
& = \left\langle
Q_H \left(
\left\langle \left(
n\frac{\delta \Im z_1}{2\pi} \, , \ldots , n \frac{\delta \Im z_L}{2\pi}
\right) \right\rangle
\right)  + y_n
\right\rangle \, ,
\end{align*} 
with the (measurable) function $Q_H : \T^L \to \T$ given by
$$
Q_H (\langle x \rangle) = \left\langle \sum\nolimits_{\ell = 1}^L 2
\widetilde{p}_{\ell} q  x_{\ell} + \frac{\widetilde{q}}{\ln
b} \ln |F_H (\langle x \rangle)| \right\rangle \, ,
$$
and with an appropriate sequence $(y_n)$ in $\R$ that satisfies
$\lim_{n\to \infty}y_n = 0$. The $L+1$ numbers $1, \frac1{2\pi }\delta \Im
  z_1 , \ldots , \frac1{2\pi} \delta\Im z_L$ are
$\Q$-independent for all but countably many $\delta > 0$, and whenever
they are,
$$
\frac1{N} \sum\nolimits_{n=1}^N e^{2\pi \imath k g_H (nq \delta)} \:
\stackrel{N\to \infty}{\longrightarrow} \: \int_{\T^L} e^{2\pi \imath
  k Q_H} \, {\rm d}\lambda_{\T^L} \quad \forall k \in \Z \, .
$$
By Lemma \ref{lem230}, this means that
$$
\lim\nolimits_{T\to +\infty} \frac1{T} \int_0^T e^{2\pi \imath k g_H (t)} \,
{\rm d}t = \int_{\T^L} e^{2\pi \imath k Q_H} \, {\rm
  d}\lambda_{\T^L}\quad 
\forall k\in \Z \, .
$$
Note that $Q_{H^*} = P_{u^*}\circ M_q$, with the map $M_q:\T^L \to
\T^L$ given by $M_q(\langle x \rangle) = \langle qx \rangle$. Observe
that $\lambda_{\T^L}\circ M_q^{-1} = \lambda_{\T^L}$, and recall that
$\lambda_{\T^L}\circ P_{u^*}^{-1}\ne \lambda_{\T}$, hence 
$$
\lambda_{\T^L} \circ Q_{H^*}^{-1} = (\lambda_{\T^L} \circ
M_q^{-1})\circ P_{u^*}^{-1} = \lambda_{\T^L}\circ P_{u^*}^{-1} \ne
\lambda_{\T} \, ,
$$
and so $\int_{\T^L} e^{2\pi \imath k^* Q_{H^*}}\, {\rm
  d}\lambda_{\T^L}\ne 0$ for some $k^* \in \Z \setminus \{0\}$.
By the Dominated Convergence
Theorem, the $\cP(\T)$-valued function $H\mapsto \lambda_{\T^L}\circ Q_H^{-1}$ is continuous on the
(non-empty open) set $\{H: \mbox{\rm (\ref{peq2a}) holds}\}\subset
\cO(X)$. Since (\ref{peq2a}) holds in particular with $H=H^*$,
$$
\lim\nolimits_{H\to H^*} \int_{\T^L} e^{2\pi \imath k^* Q_H} \, {\rm
  d} \lambda_{\T^L} = \int_{\T^L}
e^{2\pi \imath k^*Q_{H^*}}\, {\rm d}\lambda_{\T^L} \ne 0
\, ,
$$
and consequently, for every $H$ sufficiently close to $H^*$,
$$
\lim\nolimits_{T\to +\infty} \frac1{T} \int_0^T e^{2\pi \imath k^* g_H(t)}\,
{\rm d}t \ne 0 \, ,
$$
which in turn shows that $g_H$ is not c.u.d.\ mod $1$, and neither is
$\log_b |H(\phi_{\bullet})|$, by Lemma \ref{lem200}. In summary,
$H(\phi_{\bullet})$ is not $b$-Benford whenever $H$ is sufficiently
close to $H^*$. Consequently, the set $\{H: H(\phi_{\bullet}) \:
\mbox{\rm is not $b$-Benford}\}$ contains a non-empty open set, and
hence is not a nullset in $\cO(X)$. Thus (ii)$\Rightarrow$(i), and the
proof is complete.
\end{proof}

\begin{example}\label{ex312aa}
The observations made for the flows $\phi$ and $\psi$ on $\R^2$ in
Example \ref{exa39} are fully consistent with Theorem \ref{thm310}:
The set $\sigma_{\rm dom}(\phi)=\{2\}$ is exponentially
$b$-nonresonant for all $b$, while $\sigma_{\rm dom}(\psi)=\{0\}$ is
exponentially $b$-resonant. Hence $H(\phi_{\bullet})$ is Benford for
almost all $H\in \cO(\R^2)$ whereas $H(\psi_{\bullet})$ is not.
\end{example}

As indicated already in Example \ref{exa35}, the Benford property may
be of interest for some {\em non-linear\/} observables also. A simple
natural example are norms on $\cL(X)$.

\begin{theorem}\label{thm313}
Let $b\in \N \setminus \{1\}$ and $\|\cdot \|$ any norm on
$\cL(X)$. If $\sigma_{\rm dom}(\phi)$ is exponentially $b$-nonresonant for
the linear flow $\phi$ on $X$ then $\|\phi_{\bullet}\|$ is $b$-Benford.
\end{theorem}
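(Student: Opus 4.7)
The plan is to mirror the argument for (i)$\Rightarrow$(ii) in the proof of Theorem \ref{thm310}, with the norm $\|\cdot\|$ playing the role of the scalar linear observable $H$. The starting point is the expansion (\ref{peq1}), namely
\begin{equation*}
\phi_t = e^{rt} t^k \bigl( N(t) + G(t) \bigr)\,,\quad t>0\,,
\end{equation*}
where $N(t):=\sum_{z\in\sigma_{\rm dom}^+}\bigl(U_z\cos(t\Im z)+V_z\sin(t\Im z)\bigr)\in\cL(X)$, $r=r_{A_\phi}$, $k=k_{A_\phi}$, $G(t)\to 0$, and not all $U_z,V_z$ are zero, as was shown in the proof of Theorem \ref{thm310}. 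Taking $\log_b\|\cdot\|$, the task reduces via Proposition \ref{prop_dia} to showing that
\begin{equation*}
t\mapsto\frac{rt}{\ln b}+\frac{k\ln t}{\ln b}+\frac{1}{\ln b}\ln\|N(t)+G(t)\|
\end{equation*}
is c.u.d.\ mod $1$. I would then split into the familiar cases $L=0$ and $L\ge 1$, with $L$ the dimension of $\mbox{\rm span}_{\Q}\bigl\{\frac{\ln b}{\pi}\Im z:z\in\sigma_{\rm dom}^+\bigr\}$.

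If $L=0$ then $\sigma_{\rm dom}^+=\{r\}\subset\R$, $N(t)\equiv U_r\ne 0$, and Example \ref{ex2m1} shows that exponential $b$-nonresonance of $\{r\}$ forces $r\ne 0$; parts (ii) and (iv) of Lemma \ref{lem200} applied to $t\mapsto rt/\ln b$ then finish this case. If $L\ge 1$, I would pick $z_1,\ldots,z_L\in\sigma_{\rm dom}^+$ with $\frac{\ln b}{\pi}\Im z_1,\ldots,\frac{\ln b}{\pi}\Im z_L$ $\Q$-independent, introduce the integer vectors $p^{(z)}\in\Z^L$ and denominator $q\in\N$ satisfying (\ref{peq2}), and define the continuous $\cL(X)$-valued trigonometric polynomial
\begin{equation*}
\cF(\langle x\rangle):=\Re\Bigl(\sum\nolimits_{z\in\sigma_{\rm dom}^+}(U_z-\imath V_z)e^{2\pi\imath p^{(z)}\cdot x}\Bigr)\,,\quad\langle x\rangle\in\T^L\,,
\end{equation*}
so that $N(qt)=\cF\bigl(\bigl\langle(t\Im z_1/(2\pi),\ldots,t\Im z_L/(2\pi))\bigr\rangle\bigr)$ for all $t\ge 0$. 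Because the characters $\{e^{2\pi\imath p^{(z)}\cdot x}:z\in\sigma_{\rm dom}^+\}$ are pairwise distinct and at least one coefficient $U_z-\imath V_z$ is non-zero, $\cF$ is a non-trivial $\cL(X)$-valued trigonometric polynomial; in any fixed basis of $\cL(X)$ at least one entry of $\cF$ is a non-trivial real trigonometric polynomial on $\T^L$, and hence the continuous non-negative function $\tilde F:=\|\cF\|$ is non-zero $\lambda_{\T^L}$-almost everywhere.

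To apply Proposition \ref{propudisc} I would set $d=L$, $F=\tilde F$, $\alpha=k/\ln b$, $\beta=1/\ln b$, $\vartheta_0:=qr\delta/\ln b$, and $\vartheta_\ell:=\delta\Im z_\ell/(2\pi)$ for $\ell=1,\ldots,L$. Lemma \ref{prop7char}(ii) combined with exponential $b$-nonresonance of $\sigma_{\rm dom}(\phi)$ gives $qr/\ln b\notin\mbox{\rm span}_{\Q}\{\Im z_1/(2\pi),\ldots,\Im z_L/(2\pi)\}$; a routine linear-algebra argument then yields $\Q$-independence of $1,\vartheta_0,\vartheta_1,\ldots,\vartheta_L$ for all but countably many $\delta>0$. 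Writing $\epsilon_n:=\|N(qn\delta)+G(qn\delta)\|-\|N(qn\delta)\|\in\R$, the reverse triangle inequality delivers $|\epsilon_n|\le\|G(qn\delta)\|\to 0$, while $\tilde F\bigl(\bigl\langle(n\vartheta_1,\ldots,n\vartheta_L)\bigr\rangle\bigr)+\epsilon_n=\|N(qn\delta)+G(qn\delta)\|\ge 0$ forces the absolute value bars in Proposition \ref{propudisc} to dissolve. The proposition then yields u.d.\ mod $1$ of $\bigl(\log_b\|\phi_{qn\delta}\|\bigr)$ for almost all $\delta>0$; reparameterizing $\delta\leftrightarrow q\delta$ and invoking Lemma \ref{lem230} delivers c.u.d.\ mod $1$ of $\log_b\|\phi_\bullet\|$, completing the argument.

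The main subtlety I anticipate is the reconciliation of the $\cL(X)$-valued trigonometric polynomial $\cF$ with Proposition \ref{propudisc}, which is stated for $\C$-valued $F$. The device that makes the reduction work is the non-negativity of $\tilde F$ and of $\tilde F+\epsilon_n$, which makes $|\tilde F+\epsilon_n|$ coincide with $\|N+G\|$ and allows the proposition to be invoked verbatim with scalar data; without this observation one would have to extend Proposition \ref{propudisc} to Banach-space-valued trigonometric polynomials, a considerably longer route.
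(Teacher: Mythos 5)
Your proposal is correct and follows essentially the same route as the paper: start from the dominant-term expansion (\ref{peq1}), package the trigonometric part as an $\cL(X)$-valued trigonometric polynomial on $\T^L$, take the norm to obtain a scalar $F$ that is non-zero $\lambda_{\T^L}$-a.e., and then invoke Proposition \ref{propudisc} via Lemma \ref{lem230}. The only cosmetic differences are (a) the paper works with $f(t)=\log_b t^{-k}\|\phi_t\|$ so that $\alpha=0$ and the $\ln t$ contribution is reinstated at the end via Lemma \ref{lem200}(iv), whereas you keep $\alpha=k/\ln b$ inside Proposition \ref{propudisc} (equally valid, the resulting constant offset $k\ln(q\delta)/\ln b$ being harmless for u.d.\ mod $1$); and (b) you spell out the $L=0$ case separately, which the paper leaves implicit.
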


\begin{proof}
Using the same notation as in the proof of Theorem \ref{thm310} above,
let $f(t):= \log_b t^{-k}\|\phi_t\|$ for all $t>0$, and deduce from
(\ref{peq1}) and (\ref{peq2}) that
$$
f(qt) = \frac{rqt}{\ln b} + \frac1{\ln b} \ln 
\left\|
E \left(
\left\langle
\left( \frac{t\Im z_1}{2\pi} \, , \ldots , \frac{t\Im z_L}{2\pi}
\right) \right\rangle
\right) + G(qt)
\right\| \quad \forall t> 0 \, ,
$$
where the smooth function $E:\T^L \to \cL(X)$ is given by
$$
E(\langle x \rangle) = \sum\nolimits_{z\in \sigma_{\rm dom}^+} \bigl(
U_z \cos (p^{(z)}\cdot x) + V_z \sin (p^{(z)}\cdot x) 
\bigr) \, .
$$
Recall that $U_z\ne 0$ or $V_z\ne 0$ for at least one $z\in
\sigma_{\rm dom}^+$, which in turn implies that $E(\langle x \rangle
)\ne 0$, and hence also $\|E(\langle x \rangle)\|\ne 0$, for
$\lambda_{\T^L}$-almost all $\langle x \rangle \in \T^L$. The argument
is now analogous to the one establishing (i)$\Rightarrow$(ii) in
Theorem \ref{thm310}: For all but countably many $\delta > 0$, the
$L+2$ numbers $1, rq\delta /\ln b , \frac1{2\pi} \delta \Im
  z_1, \ldots , \frac1{2\pi} \delta \Im z_L$ are
$\Q$-independent, and whenever they are, the sequence $\bigl(f(qn\delta)\bigr)$
is u.d.\ mod $1$ by Proposition \ref{propudisc}, with $d=L$, $\vartheta_0=
rq \delta /\ln b$ and $\vartheta_{\ell} = \frac1{2\pi} \delta \Im
z_{\ell}$ for $\ell = 1, \ldots , L$, as well as
$$
\alpha  = 0 \, , \quad
\beta = \frac1{\ln b} \, , \quad
F = \|E\| \, , \quad
(z_n) = \bigl(\|E_n + G(qn\delta)\| - \|E_n\| \bigr) \, ,
$$
where $E_n = E \left( \left\langle \left( \frac1{2\pi} n\delta \Im z_1
    , \ldots , \frac1{2\pi} n \delta \Im z_L\right) \right \rangle \right)$.
As before, it follows that $f$ is c.u.d.\ mod $1$, and so is $\log_b
\|\phi_{\bullet}\|$, i.e., $\|\phi_{\bullet}\|$ is $b$-Benford.
\end{proof}

Unlike in Theorems \ref{thm32} and \ref{thm310}, the converse in
Theorem \ref{thm313} is not true in general: The signal $\|\phi_{\bullet}\|$ may
be $b$-Benford even if $\sigma_{\rm dom}(\phi)$ is exponentially
$b$-resonant. In fact, as the next example demonstrates, except for
the trivial case of $d=1$, it is
impossible to characterize the Benford property of $\|\phi_{\bullet}\|$
solely in terms of $\sigma(\phi)$, let alone $\sigma_{\rm dom}(\phi)$.

\begin{example}\label{ex314}
Let $b=10$ for convenience and denote by $|\cdot|$ the Euclidean (or
spectral) norm on $\cL(X)$, induced by the standard Euclidean norm
$|\cdot |$ on $X$, i.e., $|A|= \max \{|Ax|: x\in X, |x|=1 \}$ with
$|x|=\sqrt{x\cdot x}$. 
Consider the linear flow $\phi$ on $X=\R^2$ generated by
$$
\dot x = \left[  \begin{array}{rr} 1 & -2\pi /\ln 10  \\ 2\pi /\ln 10 &
    1 \end{array}\right] x \, .
$$ 
Since $\sigma(\phi) = \sigma_{\rm dom} (\phi) = \{1\pm 2\imath \pi
/\ln 10 \}$ is exponentially
$10$-resonant, by Theorem \ref{thm310} the signal $H(\phi_{\bullet})$ fails
to be $10$-Benford for many (in fact, most) $H \in \cO(\R^2)$. 
To see this explicitly, note that $H(\phi_{\bullet})=0$ if and only
if 
\begin{equation}\label{eqex01}
H(I_{\R^2}) = 0 \quad \mbox{\rm and} \quad
H \left(
\left[
\begin{array}{cr}
0 & -1 \\
1 & 0 
\end{array}
\right]
\right) = 0  \, ,
\end{equation}
and otherwise, with the appropriate $\rho > 0$ and $0\le \eta < 1$,
$$
H(\phi_t) = e^t \rho \cos \bigl( 2\pi (t/\ln 10 - \eta)\bigr) \quad
\forall t \in \R \, .
$$
For all but countably many $\delta > 0$ and all sufficiently large $n\in \N$,
\begin{align*}
\langle  \log_{10} |H(\phi_{n\delta})|\rangle & = 
\bigl\langle n\delta / \ln 10 + \log_{10}\rho + \log_{10} \big|\cos
\bigl( 2\pi (n\delta /\ln 10 - \eta ) \bigr)
\big| \bigr\rangle \\
& = \bigl\langle P (\langle n \delta/\ln 10 - \eta \rangle ) + \eta +
\log_{10}\rho \bigr\rangle \, ,
\end{align*}
with the map $P:\T \to \T$ given by
$$
P(\langle x \rangle ) = \bigl\langle  x + \log_{10}
|\cos (2\pi x)|\bigr\rangle \, .
$$
Since the sequence $(n\delta/\ln 10 - \eta)$ is u.d.\ mod $1$ for all but countably
many $\delta >0$, and since, as is easily checked, $\lambda_{\T} \circ P^{-1}\ne
\lambda_{\T}$, Lemma \ref{lem230} shows
that $H(\phi_{\bullet})$ is not Benford. Whenever (\ref{eqex01})
fails, therefore, $H(\phi_{\bullet})$ is neither $10$-Benford nor
trivial. On the other hand, $|\phi_t|\equiv e^t$ is Benford. Thus
Theorem \ref{thm313} can not in general be reversed.

Consider now also the linear flow $\psi$ on $X$ generated by 
$$
\dot x = \left[  \begin{array}{rr} 1 & -4\pi /\ln 10  \\ \pi /\ln 10  &
    1 \end{array}\right] x \, .
$$ 
Note that $A_{\psi}$ and $A_{\phi}$ are similar, so $\sigma(\psi) =
\sigma (\phi)$ and also $\sigma_{\rm dom}(\phi) = \sigma_{\rm dom}(\psi)$. A short calculation confirms that 
$$
  |\psi_t| = \frac{e^t}{4} 
\sqrt{25 - 9 \cos (4\pi t /\ln 10 ) + 3|\sin (2\pi t /\ln 10 )| \sqrt{82 -
  18 \cos (4\pi t /\ln 10 )}} \, ,
$$
and hence, for any $\delta > 0$ and $n\in \N$, 
$$
\langle \log_{10}  |\psi_{n\delta}|\rangle
= \bigl\langle Q(\langle  n \delta /\ln 10 \rangle) - \log_{10} 4
\bigr\rangle \, ,
$$
with the (piecewise smooth) map $Q:\T \to \T$ given by
$$
Q(\langle x \rangle ) = \left\langle  x + {\textstyle \frac12} \log_{10}\bigr(25 - 9
  \cos (4\pi x) + 3|\sin (2\pi x)| \sqrt{82 - 18 \cos (4\pi x)}\bigr) 
  \right\rangle \, .
$$
As before, it is straightforward to see that $\lambda_{\T} \circ Q^{-1} \ne
\lambda_{\T}$, and Lemma \ref{lem230} implies that $|\psi_{\bullet}|$
is not $10$-Benford. In summary, even though the linear flows $\phi$
and $\psi$ have identical spectra and dominant spectra,  the signal $|\phi_{\bullet}|$ is $10$-Benford whereas
the signal $|\psi_{\bullet}|$ is not.
\end{example}

\begin{rem}\label{rem315}
From Examples \ref{ex312aa} and \ref{ex314}, it may be conjectured
that if $\sigma_{\rm dom}(\phi)$ is exponentially $b$-resonant then
$\{H : H(\phi_{\bullet}) \: \mbox{\rm is $b$-Benford}\}$ actually is a
nullset in $\cO(X)$. By means of a stronger variant of Proposition
\ref{propnonuni} established in \cite{BE-JDDE}, it is not difficult to
see that this is indeed the case for $1\le d \le 4$. However, the
author does not know of a proof of, or counter-example to, this
conjecture for $d\ge 5$.
\end{rem}

\section{Most linear flows are Benford}\label{sec4}

As seen in the previous section, if $\sigma(\phi)$ is exponentially
nonresonant for the linear flow $\phi$ on $X= \R^d$, then the
Benford-or-trivial dichotomy of Theorem \ref{thm32}(ii) holds for every
signal $H(\phi_{\bullet})$. In fact, $H(\phi_{\bullet})$ is Benford
unless
\begin{equation}\label{eq41}
H(A_{\phi}^j) = 0 \quad \forall j = 0, \ldots , d -1 \, ;
\end{equation}
here, as usual, $A^0:= I_X$ for all $A\in \cL (X)$. Note that the
linear observables satisfying (\ref{eq41}), and hence
$H(\phi_{\bullet})=0$, constitute a proper subspace of $\cO(X)$. In fact, as seen in the proof of Theorem
\ref{thm310}, even if exponential nonresonance holds only for
$\sigma_{\rm dom}(\phi)$, the signal $H(\phi_{\bullet})$ is still
Benford, provided that $H$ does not belong to one distinguished proper
subspace of $\cO(X)$ that is independent of $H$. Put differently, if
$\sigma_{\rm dom}(\phi)$ or even $\sigma(\phi)$ is exponentially
nonresonant then BL is the only relevant digit distribution that can
be distilled from $\phi$ by means of linear observables. The purpose
of this short section is to demonstrate in turn that $\sigma(\phi)$,
and hence also $\sigma_{\rm dom}(\phi)$, is exponentially
$b$-nonresonant for all $b\in \N \setminus \{1\}$ and most linear
flows $\phi$, both from a topological and a measure-theoretical point of
view.

Recall that every linear flow $\phi$ on $X$ can be identified, via
$\phi \leftrightarrow A_{\phi}$ with a unique element of $\cL(X)$. The
latter space has a natural linear and topological structure making it
isomorphic and homeomorphic to $\R^{d^2}$, and hence it will be
convenient to phrase the results of this section as statements
regarding $\cL(X)$. Specifically, for every $b\in \N \setminus \{1\}$
consider the set of linear maps
$$
\cR_b:= \bigl\{A \in \cL(X) : \sigma(A) \: \mbox{\rm is exponentially
  $b$-resonant} \bigr\} \, ;
$$
also let $\cR:= \bigcup_{b\in \N \setminus \{1\}}\cR_b$. Recall that a
subset of a topological space is {\em meagre\/} (or {\em of first
  category\/}) if it is the countable union of nowhere dense
sets. According to the Baire Category Theorem, in a complete metric
space (such as, e.g., $\cL(X)$ endowed with any norm), meagre sets are, in a
sense, topologically negligible. The goal of this section, then, is to establish the
following fact which, informally put, shows that $\cR$ is a
negligible set, both topologically and measure-theoretically.

\begin{theorem}\label{thm42}
The set $\cR$ is a meagre nullset in $\cL(X)$.
\end{theorem}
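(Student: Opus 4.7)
The plan is to write $\cR$ as a countable union of closed semi-algebraic subsets of $\cL(X)$, each of dimension strictly less than $d^2$; such sets are nowhere dense and Lebesgue-null, and both properties persist under countable unions.

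Since $\cR = \bigcup_{b \ge 2} \cR_b$, it suffices to treat each $\cR_b$ separately. By Lemma \ref{prop7char}, $A \in \cR_b$ if and only if there exist $L \in \{1, \ldots, d\}$, integers $p_1, \ldots, p_L$, $q \in \N$, and (not necessarily distinct) $w_1, \ldots, w_L \in \sigma(A)$ with $\Re w_1 = \ldots = \Re w_L$ and $q \Re w_1 = \sum_\ell p_\ell \tfrac{\ln b}{\pi} \Im w_\ell$; coinciding $w_\ell$'s absorb into lower-$L$ relations via additive coefficients, so allowing repetitions is harmless. Enumerating the triples $(L, p, q)$ yields $\cR_b = \bigcup_{L, p, q} \cR_{b, L, p, q}$, where $\cR_{b, L, p, q}$ consists of those $A \in \cL(X)$ admitting such witnesses; this is a countable union. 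Each $\cR_{b, L, p, q}$ is the projection to $\cL(X)$ of the real-algebraic set
$$Z_{L, p, q} = \bigl\{(A, r, s) \in \cL(X) \times \R^{L+1} : \chi_A(r + \imath s_\ell) = 0 \; \forall \ell, \; qr = \sum\nolimits_\ell p_\ell \tfrac{\ln b}{\pi} s_\ell \bigr\},$$
with $\chi_A$ the characteristic polynomial of $A$; hence $\cR_{b, L, p, q}$ is semi-algebraic over $\R$ (viewing $\ln b/\pi$ as a fixed real constant). Closedness follows from continuity of the spectrum: if $A_n \to A$ with witnesses $w_\ell^{(n)}$, extracting a convergent subsequence yields $w_\ell^{(\infty)} \in \sigma(A)$ satisfying the limiting relation.

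The dimension bound $\dim \cR_{b, L, p, q} \le d^2 - 1$ follows from stratifying $Z_{L, p, q}$ according to which $s_\ell$'s vanish or coincide (up to sign, reflecting the conjugate symmetry of $\sigma(A)$). In the generic stratum (distinct nonzero $s_\ell$, no two negatives of each other), the $L$ complex equations $\chi_A(r + \imath s_\ell) = 0$ impose $2L$ independent real conditions on $A$, while the parameters $(r, s)$ under the one linear relation occupy an $L$-dimensional subspace, giving stratum dimension $d^2 - L$. In each degenerate stratum, both the parameter dimension $k$ and the matrix-side codimension $c$ drop, but always with $k \le c - 1$ (for instance, if $s_1 = s_2 = 0$ in $L = 2$, one has $k = 0$, $c = 1$); hence every stratum has dimension $k + (d^2 - c) \le d^2 - 1$, and the same bound transfers to the projection. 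A closed semi-algebraic subset of $\R^{d^2}$ of dimension strictly less than $d^2$ has empty interior (hence is nowhere dense) and Lebesgue measure zero, so $\cR_{b, L, p, q}$ is a meagre nullset, and Theorem \ref{thm42} follows by countable union.

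The main obstacle is the stratum-by-stratum verification of $k \le c - 1$. The underlying intuition is that for each distinct vertical line in $\C$ along which $A$ is required to have an eigenvalue, the matrix-side codimension is (generically) twice the parameter dimension consumed, while the one linear resonance relation removes one further parameter dimension; hence at least one net codimension on $A$. Executing this bookkeeping carefully through all coincidence patterns among the $s_\ell$'s constitutes the main labour of the proof.
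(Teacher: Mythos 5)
Your approach is genuinely different from the paper's, and it has a gap at exactly the point you yourself flag. The paper does not attempt a semi-algebraic dimension count at all. Instead, it restricts to the complement of the closed nullset $\cN$ of matrices with a multiple eigenvalue (Lemma~\ref{lem44}), where the eigenvalues vary real-analytically in $A$, and for each datum $(p,q,J)$ it introduces the real-analytic function
$$
F_{p,q,J}(A) = \sum_{j,k\in J}\bigl(\Re\lambda_j(A)-\Re\lambda_k(A)\bigr)^2 + \sum_{j\in J}\Bigl(\pi q\,\Re\lambda_j(A) - \ln b\sum_{k\in J}p_k\,\Im\lambda_k(A)\Bigr)^2 ,
$$
whose zero set contains the local piece of $\cR_b$. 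One then observes that $F_{p,q,J}(A_1+\delta I_X)=\pi^2q^2\delta^2\#J>0$ whenever $F_{p,q,J}(A_1)=0$, so $F_{p,q,J}$ is not identically zero, and Proposition~\ref{prop43} gives that its zero set is a closed nullset, hence nowhere dense. The codimension estimate you need (``$k\le c-1$ in every stratum'') is thereby replaced by a single one-line perturbation argument.

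The gap in your proposal is precisely the unverified claim that every coincidence stratum of $Z_{L,p,q}$ has dimension at most $d^2-1$. You illustrate it with one example and give an informal heuristic (``twice the parameter dimension consumed, minus one for the linear relation''), but you have not checked it across the strata where the $s_\ell$ collide, vanish, or pair up via the conjugate symmetry of $\sigma(A)$ — and it is exactly in these degenerate configurations that naive codimension counts can fail. You also need to observe that the real structure of $A$ makes $\chi_A(r+\imath s)=0$ and $\chi_A(r-\imath s)=0$ equivalent, so the ``matrix-side codimension'' is not simply $2L$; and that a witness $w_\ell$ appearing as a multiple eigenvalue of $A$ does not increase the codimension at all (this is why the paper excises $\cN$ first). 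A second, smaller point: to get closedness of $\cR_{b,L,p,q}$ you implicitly keep the witnesses in a compact set by continuity of the spectrum, which is fine, but you should say so, since an unbounded algebraic set can have a non-closed projection. Until the stratum-by-stratum dimension bound is actually carried out, the argument as written does not establish the theorem; I would either complete that bookkeeping in full, or replace the semi-algebraic machinery with the non-vanishing real-analytic function argument used by the paper, which avoids the case analysis entirely.
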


A crucial ingredient in the proof of Theorem \ref{thm42} presented
below is
the real-analyticity of certain functions. Recall that a function $f:\cU \to \C$,
with $\cU\ne \varnothing$ denoting a connected open subset of $\R^L$ for some
$L\in \N$, is {\em real-analytic\/} (on $\cU$) if it can be, in a neighbourhood of
each point of $\cU$, represented as a convergent power series. An
important property of real-analytic functions not shared by arbitrary $\C$-valued
$C^{\infty}$-functions on $\cU$ is the following fact regarding their
zero-locus, which apparently is part of analysis folklore; e.g., see \cite[p.83]{KP}.

\begin{prop}\label{prop43}
Let $f:\cU\to \C$ be real-analytic, and $N_f:=\{x\in \cU: f(x)=0\}$. Then
either $N_f = \cU$, or else $N_f$ is a (Lebesgue) nullset.
\end{prop}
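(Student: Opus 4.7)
The plan is to reduce to the real-valued case and then induct on the dimension $L$ of $\cU$. Writing $f = u + \imath v$ with $u,v$ real-analytic, one has $N_f = N_u \cap N_v$, so if $f\not\equiv 0$ then at least one of $u, v$ is not identically zero on $\cU$; any nullset bound on that one contains $N_f$. Hence it suffices to prove the claim for real-valued real-analytic $f$. The main tool is the identity principle for real-analytic functions on a connected open set: if $f$ vanishes on a non-empty open subset, or if all partial derivatives of $f$ at some single point of $\cU$ vanish, then $f \equiv 0$ on $\cU$. This follows from the uniqueness of Taylor expansions combined with a standard chaining argument using connectedness, and will allow local conclusions to be globalized.

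The induction proceeds as follows. For $L=1$, if $f\not\equiv 0$ the identity principle forces the zeros to be isolated, so $N_f$ is countable and hence a nullset. For the inductive step, cover $\cU$ by countably many open boxes of the form $\cB = \cB' \times (a,b) \subset \cU$ with $\cB' \subset \R^{L-1}$ open and connected; by countable subadditivity of Lebesgue measure, it suffices to show $\lambda_L(N_f \cap \cB) = 0$ for each such box. Since $f\not\equiv 0$ on the connected set $\cU$, the identity principle yields $f\not\equiv 0$ on $\cB$. Fix $c \in (a,b)$ and define the real-analytic functions $F_k(x') := \partial_{x_L}^k f(x', c)$ on $\cB'$ for $k\in \Z^+$. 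If every $F_k$ were identically zero on $\cB'$, the Taylor expansion of $f$ in $x_L$ about any point of $\cB' \times \{c\}$ would show $f$ vanishes on an open subset of $\cB$, contradicting $f\not\equiv 0$ there. Hence some $F_{k_0}\not\equiv 0$, and by the inductive hypothesis $\lambda_{L-1}\bigl(\{F_{k_0} = 0\}\bigr) = 0$. The ``bad slice'' set $B := \{x' \in \cB' : f(x',\cdot) \equiv 0 \text{ on } (a,b)\}$ is contained in $\bigcap_k \{F_k = 0\} \subset \{F_{k_0}=0\}$ and so is a $\lambda_{L-1}$-nullset; for $x' \in \cB' \setminus B$, the base case applied to $x_L \mapsto f(x', x_L)$ produces a $\lambda_1$-nullset of zeros. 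Fubini's theorem then gives $\lambda_L(N_f \cap \cB) = 0$, closing the induction.

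The main obstacle is the global-to-local passage: ensuring that the identity principle is invoked correctly at each step so that $f\not\equiv 0$ on $\cU$ implies $f\not\equiv 0$ on every sub-box encountered, and that the choice of slicing direction $x_L$ and base point $c \in (a,b)$ is compatible with the induction hypothesis on the connected projection $\cB'$. Everything else is standard Fubini plus countable subadditivity; no new analytic input beyond the identity principle is required.
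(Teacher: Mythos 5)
Your proof is correct, but there is no in-paper proof to compare it against: the paper states Proposition \ref{prop43} as ``part of analysis folklore'' and simply cites \cite[p.83]{KP}. What you have written is, in effect, the standard folklore argument that such references contain: reduce to real-valued $f$ (via $N_f=N_u\cap N_v$, or equivalently by passing to $|f|^2=u^2+v^2$), then induct on the dimension by slicing along the last coordinate --- the identity principle on the connected box $\cB=\cB'\times(a,b)$ produces some derivative $F_{k_0}(x')=\partial_{x_L}^{k_0}f(x',c)$ not identically zero on the connected base $\cB'$, the inductive hypothesis makes the set of ``bad'' slices $\lambda_{L-1}$-null, the one-dimensional case (isolated zeros, hence countably many) handles the good slices, and Fubini (legitimate since $N_f\cap\cB$ is closed, hence Borel) gives $\lambda_L(N_f\cap\cB)=0$; countable subadditivity over a box cover of $\cU$ finishes. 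You have handled the genuinely delicate points: the boxes and their bases are chosen connected so that both the identity principle and the induction hypothesis (which, per the paper's definition of real-analyticity, is stated for connected open sets) apply, and the slice functions $f(x',\cdot)$ and $F_k$ are again real-analytic. Two cosmetic remarks only: in your statement of the identity principle the clause ``all partial derivatives vanish at a point'' should be understood to include the zeroth-order derivative, i.e.\ the value of $f$ itself; and the countability of an isolated zero set in the base case uses second countability of $\R$. Neither affects the argument. Your self-contained proof buys independence from the cited literature at the cost of length; the paper's citation buys brevity.
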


Next consider any monic polynomial $p_{a}:\C \to \C$ of degree
$L\ge 2$, i.e.,
$$
p_{a}(z) = z^L + a_{1}z^{L-1} + \ldots + a_{L-1}z +
a_L \, ,
$$
where $a=(a_1, \ldots , a_L)\in \R^L$, and recall that
$p_{a}$ has, for most $a \in \R^L$, only simple roots. More
formally, there exists a non-constant
real-analytic function $g_L:\R^L \to \R$ with the property that
if $p_{a}$ has a multiple root, i.e., $p_{a}(z_0) =
p_{a}'(z_0)=0$ for some $z_0\in \C$, then $g_L(a)=0$. In
fact, the function $g_L$ can be chosen as a polynomial with integer
coefficients and degree $2L-2$; e.g., see \cite[Lem.3.3.4]{Cohen}.
Whenever $g_L(a)\ne 0$, therefore, the equation $p_{a}(z)=0$
has exactly $L$ different solutions which, by (the real-analytic
version of) the Implicit Function Theorem \cite[Thm.2.3.5]{KP} depend
real-analytically on $a$. To put these facts together in a form
facilitating a proof of Theorem \ref{thm42}, for every $A_0 \in
\cL(X)$ and $\varepsilon > 0$, denote by $B_{\varepsilon}(A_0)$ the
open ball with radius $\varepsilon$ centered at $A_0$, that is,
$B_{\varepsilon}(A_0)=\{A \in \cL(X) :\|A- A_0\|< \varepsilon\}$,
where $\|\cdot \|$ is any fixed norm on $\cL(X)$.

\begin{lem}\label{lem44}
There exists a closed nullset $\cN \subset \cL(X)$ with the following
property: For each $A_0 \in \cL(X) \setminus \cN$ there exist
$\varepsilon > 0$ and $d$ real-analytic functions $\lambda_1 , \ldots
,\lambda_d: B_{\varepsilon}(A_0)\to \C$ such that, for all $A\in
B_{\varepsilon}(A_0)$,
\begin{enumerate}
\item $\sigma(A)=\{\lambda_1 (A), \ldots , \lambda_d (A)\}$;
\item $\lambda_j(A) \ne \lambda_k (A)$ whenever $j\ne k $;
\item $\lambda_j(A)\ne \overline{\lambda_k (A)}$ whenever $j\ne k$,
  unless $\lambda_j = \overline{\lambda_k}$ on $B_{\varepsilon}(A_0)$.
\end{enumerate}
\end{lem}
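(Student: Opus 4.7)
The plan is to take $\cN$ to be the discriminant locus of the characteristic polynomial, namely
$\cN = \{A\in \cL(X) : g_d(a_1(A), \ldots , a_d(A)) = 0\}$,
where $p_A(z) = z^d + a_1(A) z^{d-1} + \cdots + a_d(A)$ denotes the characteristic polynomial (so each coefficient $a_j(A)$ is a polynomial in the entries of $A$) and $g_d$ is the polynomial discriminant from \cite[Lem.3.3.4]{Cohen} that vanishes precisely when $p_A$ has a multiple root. Composing $g_d$ with $A\mapsto (a_1(A), \ldots, a_d(A))$ produces a polynomial on $\cL(X) \cong \R^{d^2}$, so $\cN$ is closed; and since this polynomial does not vanish identically (evaluate on any diagonal matrix with pairwise distinct entries), $\cN$ is a proper real-analytic subvariety, hence a Lebesgue nullset by Proposition \ref{prop43}.

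Given $A_0 \in \cL(X) \setminus \cN$, the roots $z_1, \ldots, z_d \in \C$ of $p_{A_0}$ are pairwise distinct, and I would apply the holomorphic implicit function theorem to the jointly polynomial map $(A, z) \mapsto p_A(z)$ at each $(A_0, z_j)$. Since $\partial_z p_{A_0}(z_j) \ne 0$ by simplicity, this yields a local holomorphic branch of the root, which restricts on the real slice to a real-analytic function $\lambda_j: B_\varepsilon (A_0) \to \C$ satisfying $\lambda_j(A_0) = z_j$ and $p_A(\lambda_j(A)) \equiv 0$. Shrinking $\varepsilon$ so that each branch stays confined to a small disk isolating $z_j$ from the other roots delivers (i) and (ii) at once.

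For property (iii), I would fix a pair $j \ne k$ and split into two cases. If $\lambda_j(A_0) \ne \overline{\lambda_k(A_0)}$, a further continuity-based shrinkage of $\varepsilon$ makes the inequality persist throughout $B_\varepsilon (A_0)$. If instead $\lambda_j(A_0) = \overline{\lambda_k(A_0)}$, then (ii) forces this common value to be non-real, so $\lambda_j(A_0)$ and $\lambda_k(A_0)$ constitute a genuine non-real conjugate pair; because $A$ has real entries, $\overline{\lambda_j(A)}$ is again a root of $p_A$ for every $A \in B_\varepsilon (A_0)$, and continuity forces it to remain inside the isolating disk around $\lambda_k(A_0)$. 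Consequently $\overline{\lambda_j(A)} = \lambda_k(A)$ throughout $B_\varepsilon(A_0)$, i.e., $\lambda_j \equiv \overline{\lambda_k}$ on the ball. Only finitely many pairs $(j,k)$ are involved, so a single sufficiently small $\varepsilon$ accommodates all three properties simultaneously.

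The conceptual core---and the only mild subtlety---is recognising that the real-analyticity asserted in the lemma follows automatically from the joint polynomial dependence of $p_A(z)$ on $A$ and $z$: the complex implicit function theorem applied to the complexified matrix entries yields a genuinely holomorphic branch whose restriction to the real slice of $\cL(X)$ is real-analytic. I do not anticipate any serious obstacle; the argument is largely bookkeeping, made clean by the fact that at a simple-spectrum point each eigenvalue can be tracked unambiguously, and the reality of $A$ forces the conjugation structure on the branches to be either globally persistent or entirely absent.
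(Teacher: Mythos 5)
Your proposal is correct and follows the paper's own proof essentially verbatim: the same discriminant locus $\cN$, the same appeal to Proposition \ref{prop43} to see that $\cN$ is a closed nullset, and the same implicit-function-theorem construction of the branches $\lambda_j$. The only difference is cosmetic: for (iii), the paper invokes Proposition \ref{prop43} once more to upgrade $\lambda_j=\overline{\lambda_k}$ from a neighbourhood of some $A_1\in B_{\varepsilon}(A_0)$ to all of $B_{\varepsilon}(A_0)$, whereas you argue directly via continuity together with conjugate-symmetric root-isolating disks; both routes are sound.
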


\begin{proof}
For $d=1$ simply take $\cN = \varnothing$ and $\lambda_1 ([a])=a$. For
$d\ge 2$, note that
$$
p_A(z) := \det (zI_X - A) = z^d + a_1(A)z^{d-1} + \ldots +
a_{d-1}(A) z + a_d (A) \, ,
$$
with real-analytic (in fact, polynomial) functions $a_1, \ldots ,
a_d : \cL(X)\to \R$; for example, $a_1(A) = - \mbox{\rm trace}\, A$ and
$a_d(A)=(-1)^d \det A$. Thus the function $g:= g_d(a_1, \ldots,
a_d):\cL (X) \to \R$ is real-analytic and non-constant, and so
$$
\cN:= \bigl\{ A\in \cL(X) : A \: \mbox{\rm has a multiple eigenvalue}\bigr\} =
\{A \in \cL(X) : g(A) = 0\}
$$
is a closed nullset, by Proposition \ref{prop43}. For each $A_0 \in
\cL(X)\setminus \cN$ there exists $\varepsilon > 0$ such that
$B_{\varepsilon}(A_0)\cap \cN = \varnothing$, and for $\varepsilon$
sufficiently small, by the Implicit
Function Theorem, there also exist $d$
real-analytic functions $\lambda_1, \ldots , \lambda_d :
B_{\varepsilon}(A_0)\to \C$ with $\sigma(A) = \{\lambda_1(A),
\ldots , \lambda_d(A)\}$ for all $A\in B_{\varepsilon}(A_0)$. Clearly,
$\lambda_j (A) \ne \lambda_k (A)$ whenever $j\ne k$, since otherwise
$g(A)=0$. Finally, if $\lambda_j(A_1) = \overline{\lambda_k(A_1)}$ for
some $A_1 \in B_{\varepsilon}(A_0)$ then $\overline{\lambda_k (A)}$ is, for every
$A$ sufficiently close to $A_1$, an eigenvalue of $A$ that, by
continuity, must coincide with $\lambda_j(A)$. Hence
$\lambda_j(A)=\overline{\lambda_k (A)}$ for all $A$ close to $A_1$,
and therefore, by Proposition \ref{prop43}, for all $A\in
B_{\varepsilon}(A_0)$ as well.
\end{proof}

\begin{proof}[Proof of Theorem \ref{thm42}]
Since for $d=1$ clearly $\cR_b = \{0\}$ for all $b$, the set $\cR=\{0\}$ is a meagre nullset in $\cL(X) =
\R$, and only the case $d\ge 2$ has to be considered henceforth. Fix $b\in \N \setminus \{1\}$, and
given any $p=(p_1, \ldots , p_d)\in \Z^d$, $q\in \N$, and non-empty
set $J\subset \{1, \ldots , d\}$, define a real-analytic (in fact,
polynomial) function $f_{p,q,J}:\R^{2d}\to \R$ as
$$
f_{p,q,J} (x) :=
\sum\nolimits_{j,k\in J} (x_j - x_k)^2 + \sum \nolimits_{j\in J}
\left(
\pi q x_j - \ln b \sum\nolimits_{k\in J} p_k x_{d+k}
\right)^2 \, .
$$
For each $A_0\in \cL(X)\setminus \cN$, pick $\varepsilon > 0$ and
$\lambda_1, \ldots , \lambda_d: B_{\varepsilon}(A_0)\to \C$ as in
Lemma \ref{lem44}. Observe that if $\sigma(A)$ is exponentially
$b$-resonant for some $A\in B_{\varepsilon}(A_0)$ then
\begin{equation}\label{eq45}
F_{p,q,J}(A):= f_{p,q,J} \bigl(
\Re \lambda_1 (A) , \ldots , \Re \lambda_d(A), \Im \lambda_1 (A) ,
\ldots , \Im \lambda_d(A)
\bigr) = 0 
\end{equation}
for the appropriate $p$, $q$, and $J$.
Clearly, every function $F_{p,q,J}: B_{\varepsilon}(A_0)\to \R$ is
real-analytic. Moreover, if
$F_{p,q,J}(A_1)=0$ for some $A_1\in B_{\varepsilon}(A_0)$ then also $A_1
+\delta I_X \in B_{\varepsilon}(A_0)$ for all sufficiently small
$\delta>0$, and $F_{p,q,J}(A_1 +\delta I_X)= \pi^2 q^2 \delta^2 \# J
>0$. Thus $F_{p,q,J}\ne 0$, and hence the set
$$
\cN_{p,q,J,A_0}:= \bigl\{
A\in B_{\varepsilon}(A_0) : F_{p,q,J}(A) = 0
\bigr\}
$$
is a closed nullset, by Proposition \ref{prop43}; in particular,
$\cN_{p,q,J,A_0}$ is nowhere dense, and (\ref{eq45}) implies that
$$
\cR_b \cap B_{\varepsilon}(A_0) = \bigcup\nolimits_{p,q,J}
\cN_{p,q,J,A_0} =: \cN_{A_0}\, .
$$
Being the countable union of nowhere dense nullsets, the set
$\cN_{A_0}$ is itself a meagre nullset. Since $\cL(X)$ is separable,
there exists a sequence $(A_{0,n})$ in $\cL(X)\setminus \cN$ and a sequence
$(\varepsilon_n)$ in $\R$ with $\varepsilon_n > 0$ for all $n$, such
that
$$
\cL(X) \setminus \cN = \bigcup\nolimits_{n\in \N} B_{\varepsilon_n}
(A_{0,n}) \, .
$$
It follows that
$$
\cR_b \subset \cN \cup \bigcup\nolimits_{n\in \N} \bigl( \cR_b \cap
B_{\varepsilon_n}(A_{0,n})\bigr) = \cN \cup \bigcup \nolimits_{n\in \N}
\cN_{A_0,n} \, ,
$$
which shows that $\cR_b$ is a meagre nullset as well, and so is $\cR =
\bigcup_{b\in \N \setminus \{1\}}\cR_b$.
\end{proof}

\begin{rem}\label{rem45}
Despite being a meagre nullset, the set $\cR$ could nevertheless be
dense in $\cL(X)$. This, however, is not the case: Lemmas
\ref{prop7char} and \ref{lem44} imply that $\cL(X)\setminus \cR$ contains the
non-empty open set $\bigl\{ A \in \cL(X)\setminus \cN : \sigma(A)
\subset  \R \setminus \{0\}\bigr\}$.
\end{rem}

Informally put, Theorems \ref{thm32} and \ref{thm42} together show that for a generic linear flow $\phi$ on
$X=\R^d$, the set $\sigma(\phi)$ is exponentially $b$-nonresonant for all
bases $b$, and so for each linear observable $H$ on $\cL(X)$ the signal
$H(\phi_{\bullet})$ is Benford unless (\ref{eq41}) holds, in which
case $H(\phi_{\bullet})=0$. This may provide yet another
explanation as to why BL is so often observed for even the simplest dynamical models 
in science and engineering.

\subsubsection*{Acknowledgements}

The author was supported by an {\sc Nserc} Discovery
Grant. He wishes to thank T.P.\ Hill, B.\ Schmuland, M.\ Waldschmidt, A.\ Weiss, and
R.\ Zweim\"{u}ller for many helpful discussions and comments which, in one
way or another, have informed this work.

\end{document}